\theoremstyle{definition}
\newtheorem{thm}{Theorem}[section]
\newtheorem{prop}{Proposition}[section]
\newtheorem{lemma}{Lemma}[section]
\theoremstyle{remark}
\newtheorem*{rmk}{Remark}
\titleformat{\section}{\vskip25pt\centering\normalfont\large\bf}{\thesection.}{1em}{}
\titleformat{\subsection}[runin]{\vskip10pt\normalfont\normalsize\bf}{\thesubsection.}{1em}{}
\newcommand{\rd}{\mathrm{d}}
\newcommand{\re}{\mathrm{e}}
\newcommand{\ri}{\mathrm{i}}
\newcommand{\cA}{\mathscr{A}}
\newcommand{\cC}{\mathscr{C}}
\newcommand{\cD}{\mathscr{D}}
\newcommand{\cE}{\mathscr{E}}
\newcommand{\cF}{\mathscr{F}}
\newcommand{\cH}{\mathscr{H}}
\newcommand{\cZ}{\mathscr{Z}}
\newcommand{\NN}{\mathbb{N}}
\newcommand{\ZZ}{\mathbb{Z}}
\newcommand{\RR}{\mathbb{R}}
\newcommand{\CC}{\mathbb{C}}
\newcommand{\HH}{\mathbb{H}}
\newcommand{\PP}{\mathbf{P}}
\newcommand{\TT}{\mathbb{T}}
\newcommand{\pd}{\partial}
\newcommand{\eps}{\varepsilon}
\newcommand{\Ups}{\Upsilon}
\newcommand{\fZ}{\mathfrak{Z}}
\newcommand{\1}{\mathbb{1}}
\let\Re\relax\let\Im\relax
\DeclareMathOperator{\Re}{Re}
\DeclareMathOperator{\Im}{Im}
\DeclareMathOperator{\sgn}{sgn}
\renewcommand{\hat}{\widehat}
\renewcommand{\bar}{\overline}
\newcommand{\und}{\underline}
\newcommand{\wick}[1]{\hskip-1pt:\hskip-2pt#1\hskip-2pt:\hskip-1pt}
\newcommand{\norm}[2]{\lVert #1\rVert_{#2}}
\newcommand{\bignorm}[2]{\Big\lVert #1\Big\rVert_{#2}}
\newcommand{\ja}[1]{\langle #1 \rangle} 
\newcommand{\innerprod}[3]{\langle #1,#2 \rangle_{#3}}
\DeclareMathOperator*{\wlim}{w-lim}
\DeclareMathOperator{\EE}{\mathbf{E}}
\title{Phase transitions for fractional $\Phi^3_d$ on the torus}
\author{Niko Nikov}
\address{
Niko Nikov\\ School of Mathematics\\
The University of Edinburgh\\
and The Maxwell Institute for the Mathematical Sciences\\ James Clerk Maxwell Building\\ The King's Buildings\\ Peter Guthrie Tait Road\\ Edinburgh\\
EH9 3FD\\
United Kingdom}
\email{N.A.Nikov@sms.ed.ac.uk}
\subjclass[2020]{Primary 60H30; Secondary 81T08, 82B26, 35Q55}
\date{}
\begin{document}

\maketitle
\begin{abstract}
	We consider the fractional $\Phi^3_d$-measure on the $d$-dimensional torus, with Gaussian free field having inverse covariance $(1-\Delta)^\alpha$, and show a phase transition at $d=3\alpha$. More precisely, in a regular regime $d<3\alpha$, one can construct and normalise this measure, and obtain a measure which is absolutely continuous with respect to the Gaussian free field $\mu$. At $d=3\alpha$, the behaviour depends on the size $|\sigma|$ of the nonlinearity: for $|\sigma|\ll1$, the measure exists, but is singular with respect to $\mu$, whereas for $|\sigma|\gg1$, the measure is not normalisable. This generalises a result of Oh, Okamoto, and Tolomeo (2025) on the $\Phi^3_3$-measure. 
\end{abstract}
%

\section{Introduction}
In this paper, we consider the fractional $\Phi^3_d$-measure formally given by
\begin{equation} \label{gibbs formal}
	\varrho(\rd u)=\cZ^{-1}\exp\Bigl(\frac\sigma3\int_{\TT^d} u^3\, \rd x-\frac{1}{2}\int_{\TT^d} ((1-\Delta)^\frac\alpha2 u)^2\, \rd x\Bigr)\, \rd u,
\end{equation}
and find a phase transition for $\varrho$ at $d=3\alpha$. Namely, we can make sense of $\varrho$ as a probability measure for $d < 3\alpha$, and when $|\sigma| \ll 1$ for $d = 3\alpha$; if $|\sigma| \gg 1$, then $\varrho$ is not normalisable. The formal expression above has the more precise interpretation
\begin{equation*}
	\varrho(\rd u)=\cZ^{-1}\exp\Bigl(\frac\sigma3\int_{\TT^d} u^3\, \rd x\Bigr)\, \mu(\rd u),
\end{equation*}
where $\mu$ is the centred Gaussian with inverse covariance $(1-\Delta)^\alpha$ on the space of distributions $\cD'(\TT^d)$. This paper aims to continue the (measure) study in \cite[Sections 3 and 4]{OOT24}, where Oh, Okamoto, and Tolomeo progressed the program initiated by Lebowitz, Rose, and Speer in \cite{LRS88} on (non-)construction of focusing (i.e. non-defocusing) Gibbs measures. This was motivated by the study of statistical mechanics for the nonlinear Schr\"odinger equation (NLS) in one dimesion. In \cite{LRS88}, the authors considered Gibbs measures of the form
\begin{equation} \label{Gibbs for NLS}
	\varrho(\rd u) = \cZ^{-1}\exp\Bigl(\frac 1 p \int_{\TT^d}|u|^p\,\rd x-\frac12\int_{\TT^d}|\nabla u|^2\,\rd x\Bigr)\, \rd u,
\end{equation}
with $d = 1$, $p > 2$. We interpret $\varrho$ as a potential density $\exp(-V(u))$ with respect to the massless Gaussian free field $\mu(\rd u)$. However, as $p> 2$, the scaling $u \mapsto \lambda u$ indicates that the energy functional in the exponential is unbounded from above, and so the measure in \eqref{Gibbs for NLS} has no hope of being a probability. Nevertheless, mass is a conserved quantity for NLS, and so Lebowitz, Rose, and Speer suggested considering the measure with a mass ($L^2$-norm) cutoff in the form
\begin{equation} \label{Gibbs for NLS mass cutoff}
	\varrho(\rd u) = \cZ^{-1}\exp\Bigl(\frac\sigma p\int_{\TT^d}|u|^p\,\rd x-\frac12\int_{\TT^d}|\nabla u|^2\,\rd x\Bigr)\1\{M(u)\le K\}\, \rd u,
\end{equation}
where $M(u) \coloneq \int_{\TT^d}|u|^2\,\rd x$. In \cite{Bou99}, Bourgain generalised this construction, and considered the family of generalised Gibbs measures where the mass cutoff is replaced by a mass taming. These take the form
\begin{equation} \label{Gibbs for NLS mass taming}
	\varrho(\rd u) = \cZ^{-1}\exp\Bigl(\frac\sigma p\int_{\TT^d}|u|^p\,\rd x-A\Bigl(\int_{\TT^d} |u|^2\, \rd x\Bigr)^\gamma-\frac12\int_{\TT^d}|\nabla u|^2\,\rd x\Bigr) \rd u,
\end{equation}
with $A>0$, $\gamma\ge 1$.\footnote{Observe that $\1\{|\cdot| \le K\} \le \exp(AK^\gamma)\exp(-A(\cdot)^\gamma)$; this relates \eqref{Gibbs for NLS mass cutoff} and \eqref{Gibbs for NLS mass taming}.} Either construction solves the issue of the energy functional being unbounded. Indeed, by the Gagliardo-Nirenberg-Sobolev (GNS) interpolation inequality on $\RR^d$, we have
\begin{equation} \label{GNS} \tag{GNS}
	\int_{\RR^d}|u|^p\,\rd x \le C_{\rm GNS}(d,p)\Bigl(\int_{\RR^d}|\nabla u|^2\,\rd x\Bigr)^\frac{(p-2)d}{2}\Bigl(\int_{\RR^d} |u|^2\,\rd x\Bigr)^{2+\frac{(p-2)(2-d)}{2}}.
\end{equation}
This suggest that this measure is constructible whenever $\frac{(p-2)d}2<2$, i.e. $p=2+\frac4d$. This heuristic turns out to be correct in one dimension \cite{LRS88}, while the situation becomes more complicated for $d\ge 2$. When $d=2$, it was shown that the measure never exists, independently of the mass cutoff \cite{BS96}, see also \cite{OST24}. The program of constructibility has a long history, and was completed by Oh, Okamoto, and Tolomeo in \cite{OOT24}, where the authors showed that when $d = 3$, $p = 3$, there is a phase transition emerging depending on the size of $|\sigma|$. We summarise here the current state of the art in the study of measures of the form \eqref{Gibbs for NLS mass cutoff}, \eqref{Gibbs for NLS mass taming}.
\begin{thm} \label{Phi^p_d construction summary}
	(Constructibility of $\Phi^p_d$.)
	\begin{enumerate}[label=(\roman*)]
		\item ($d=1$, \cite{LRS88,Bou94,OST22,TW24}) We state the results with $\sigma=1$. We consider
		\begin{equation*}
			\varrho(\rd u) = \cZ^{-1}\exp\Bigl(\frac1p\int_\TT |u|^p\,\rd x\Bigr)\1\{M(u)\le K\}\, \mu(\rd u).
		\end{equation*}
		Then we have the following.
		\begin{enumerate}[label=(\Roman*)]
			\item (subcritical case, $2<p<6$) $\varrho$ exists as a probability for any $K>0$.
			\item (critical case, $p=6$) $\varrho$ exists as a probability if and only if $K<\norm{Q}{L^2(\RR)}^2$, where $Q$ is the optimiser for \eqref{GNS}.
			\item (supercritical case, $p>6$) $\varrho$ is not normalisable.
		\end{enumerate}
		\item 
		\begin{enumerate}[label=(\Roman*)]
			\item ($d=2$, $p=3$, \cite{Bou99}, construction due to Jaffe; see also \cite{OST24}) The measure
			\begin{equation*}
				\varrho(\rd u) = \cZ^{-1}\exp\Bigl(\frac13\int_{\TT^2}\wick{u^3}\,\rd x\Bigr)\1\{\wick{M(u)}\,\le K\}\, \mu(\rd u)
			\end{equation*}
			exists as a probability measure.
			\item ($d=2$, $p=4$, \cite{BS96,OST24}) The measure
			\begin{equation*}
				\varrho(\rd u) = \cZ^{-1}\exp\Bigl(\frac\sigma4\int_{\TT^2}\wick{|u|^4}\,\rd x\Bigr)\1\{\wick{M(u)}\,\le K\}\, \mu(\rd u)
			\end{equation*}
			does not exist as a probability measure.
		\end{enumerate}
		\item ($d=3$, \cite{OOT24}) We consider
		\begin{equation*}
			\varrho(\rd u) = \cZ^{-1}\exp\Bigl(\frac\sigma3\int_{\TT^3}\wick{u^3}\,\rd x-A\Big|\int_{\TT^3}\wick{u^2}\,\rd x\Big|^\gamma\Bigr)\, \mu(\rd u).
		\end{equation*}
		Then we have the following.
		\begin{enumerate}[label=(\Roman*)]
			\item (weakly nonlinear case, $|\sigma|\ll1$) Following a second renormalisation, we can make sense of $\varrho$ as a probability measure of the form
			\begin{equation*}
				\varrho(\rd u) = \cZ^{-1}\exp\Bigl(\frac\sigma p\int_{\TT^3}\wick{u^3}\,\rd x-A\Big|\int_{\TT^3}\wick{u^2}\,\rd x\Big|^\gamma-\infty\Bigr)\, \mu(\rd u).
			\end{equation*}
			but $\varrho\perp\mu$.
			\item (strongly nonlinear case, $|\sigma|\gg1$) There exists a $\sigma$-finite version $\varrho^\delta$ of $\varrho$ as above, and $\varrho^\delta$ has infinite mass.
		\end{enumerate}
	\end{enumerate}
\end{thm}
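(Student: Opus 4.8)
This statement collects results from \cite{LRS88,Bou94,Bou99,BS96,OST22,OST24,TW24,OOT24}, established there by a range of techniques; rather than reassemble all of them, the plan is to describe the common architecture and locate each case within it. Fix a frequency truncation $u_N$ of the Gaussian free field $\mu$, write $V_\sigma$ for the (Wick-renormalised, when $d\ge2$) interaction potential $-\tfrac\sigma p\int_{\TT^d}\wick{u^p}\,\rd x$ plus any further counterterms, and write $W$ for the mass penalty: either the hard cutoff $\infty\cdot\1\{M(u)>K\}$ of \eqref{Gibbs for NLS mass cutoff} or the taming $A\big|\int_{\TT^d}\wick{u^2}\,\rd x\big|^\gamma$ of \eqref{Gibbs for NLS mass taming}, the two being interchangeable by the elementary bound in the footnote. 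Constructibility as a probability then reduces to the uniform bound $\sup_N\EE_\mu\big[\re^{-V_\sigma(u_N)-W(u_N)}\big]<\infty$ plus tightness of the truncated measures; non-constructibility reduces to $\EE_\mu\big[\re^{-V_\sigma(u_N)-W(u_N)}\big]\to+\infty$; and the $\sigma$-finite, infinite-mass statement in (iii)(II) is obtained afterwards by inserting an auxiliary cutoff $\1\{(\text{the divergent observable})\le\delta\}$, constructing the truncated measure for each $\delta$, and letting $\delta\to\infty$.

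For the constructibility direction the tool is the Bou\'e--Dupuis variational formula,
\[
-\log\EE_\mu\big[\re^{-V_\sigma(u_N)-W(u_N)}\big]=\inf_{v}\ \EE\Big[V_\sigma(u_N+v)+W(u_N+v)+\tfrac12\int_{\TT^d}|\nabla v|^2\,\rd x\Big],
\]
the infimum over drifts $v$ in the Cameron--Martin space, after which one needs a lower bound uniform in $N$. This is where \eqref{GNS} enters: it dominates $\int\wick{|u_N+v|^p}$ by a small constant times $\int|\nabla v|^2$, times a power of the mass kept finite by $W$, up to purely stochastic terms; below the deterministic critical scaling $\tfrac{(p-2)d}{2}<2$ predicted by \eqref{GNS} these stochastic terms are controlled by hypercontractivity (Nelson's estimate), \emph{provided} the Wick-renormalised interaction converges in $L^2(\mu)$. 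This holds trivially for $d=1$ (the field is a genuine function, no renormalisation), giving (i)(I) and, through the \emph{sharp} GNS constant, the threshold $K<\norm{Q}{L^2(\RR)}^2$ in (i)(II); and for $d=2,\ p=3$ after subtracting one divergent constant, with the mass cutoff keeping the cubic term from escaping to $-\infty$ — this is the Jaffe/\cite{Bou99} construction, giving (ii)(I). The genuinely delicate case is $d=3,\ p=3$ with $|\sigma|\ll1$: there $\int_{\TT^3}\wick{u_N^3}$ fails to converge, its second moment diverging logarithmically, so one must subtract a second divergent constant — the ``$-\infty$'' — and then the $L^1$ bound is no longer automatic. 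It survives only for $|\sigma|$ small, precisely because smallness of $|\sigma|$ is what lets the residual quadratic-in-$v$ terms produced by expanding the cubic against the optimal drift be absorbed into $\tfrac12\int|\nabla v|^2$. Once the limit exists, $\varrho\perp\mu$ follows by exhibiting an event measurable with respect to the renormalised cubic interaction that has full $\varrho$-measure and null $\mu$-measure — equivalently, by showing the optimal drift retains a non-vanishing amount of energy as $N\to\infty$.

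For the non-constructibility direction the tool is a concentration/test-function estimate: feed into the partition function a rescaled near-optimiser of \eqref{GNS}, schematically $u\approx\lambda^{d/2}Q(\lambda\,\cdot)$, which keeps the mass bounded while $\int|u|^p\sim\lambda^{(p-2)d/2}$ and $\int|\nabla u|^2\sim\lambda^2$, and check that the interaction gain beats the Gaussian cost. Strictly above the critical line — $p>6$ for $d=1$; $d=2,\ p=4$; and $d=3,\ p=3$ with $|\sigma|\gg1$ — the energy behaves like $\lambda^{(p-2)d/2}-\lambda^2\to+\infty$, so $\EE_\mu[\cdots]\to+\infty$; at the critical line $d=1,\ p=6$ the same substitution shows the energy is bounded above precisely for $K<\norm{Q}{L^2(\RR)}^2$. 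Each estimate is made rigorous by first working with the truncated field and shifting by a mollified soliton profile so the Wick corrections stay harmless, and in the $d=3$ strongly nonlinear regime this construction also shows the $\sigma$-finite measure $\varrho^\delta$ has total mass diverging as $\delta\to\infty$.

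The main obstacle, in each case, is matching the two directions at the borderline. Pinning down the exact threshold — the value $\norm{Q}{L^2}^2$ for $d=1,\ p=6$, and the $|\sigma|$-dichotomy for $d=3,\ p=3$ — needs the sharp GNS constant together with a fine analysis of how the variational infimum degenerates under concentration, including the endpoint $K=\norm{Q}{L^2}^2$. For the $\Phi^3_3$ weakly nonlinear regime this is the hardest point: one must show \emph{simultaneously} that after the second renormalisation the infimum stays finite (so $\varrho$ is a probability) and that it is nonetheless not attained at $v=0$ in the limit (so $\varrho\perp\mu$); this two-sided control, together with the $\sigma$-finite construction in the strongly nonlinear regime, is where the bulk of \cite{OOT24} lies.
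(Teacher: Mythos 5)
This theorem is a literature survey with no proof in the paper: each item is simply attributed to the cited references, and the paper does not reprove any of them. Your proposal recognises this and gives a unified architectural sketch rather than a proof, which is the right way to treat the statement. The sketch is an accurate description of the modern variational viewpoint (Boué--Dupuis lower bound for constructibility, soliton-profile drift for non-normalisability, GNS governing both sides) that unifies \cite{OST22,OST24,OOT24} and the body of this paper, and you correctly note where the heuristic scaling prediction is overridden by renormalisation in $d=3$.

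Two caveats. First, the Boué--Dupuis cost is $\frac12\int_0^1\norm{\theta(t)}{L^2}^2\,\rd t$ over progressively-measurable drifts $\theta$, with the shift being $\Theta(1)=\int_0^1\ja{\nabla}^{-\alpha}\theta(t)\,\rd t$ — not literally $\frac12\int|\nabla v|^2\,\rd x$. By Cauchy--Schwarz the former dominates $\norm{\Theta(1)}{H^\alpha}^2$, so the simplification is harmless for proving upper/lower bounds on the partition function, but the martingale/filtration structure of the drift is used essentially in the change-of-variable of Subsection~\ref{c-o-v} (Itô calculus applied to $\wick{Y_N^2}\,\Theta_N$), and that step cannot be phrased in terms of $v$ alone. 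Second, the original one-dimensional references \cite{LRS88,Bou94} predate the variational method and proceed by Gaussian conditioning and large-deviation estimates; your framework is a retrospective unification, appropriate for locating the results in a common landscape, but not a description of the historical proof route in each case.
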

Note that, in dimension three, the critical nonlinearity is $p=3$ as opposed to $p=\frac{10}3$ as predicted by our earlier heuristic. Similar measures have been studied in \cite{OOT25,GOTT24,Seo24b,LOZ24,CLO24}. \vspace{\baselineskip}

In this paper we generalise the result in \cite{OOT24} to consider non-integer dimension. While this is not a well-defined concept, in the context of stochastic quantisation, one of the standard ways of performing this generalisation is to replace the kinetic energy $\int |\nabla u|^2$ with $\int |(-\Delta)^{\frac\alpha2}u|^2$ (see, for instance, \cite{Las00,BMS03,CMW23,DGR24}), which, after performing the mass taming, leads to measures of the form \eqref{gibbs formal}.

In particular, we consider the measure $\varrho$, formally given as in \eqref{gibbs formal}, and we show that the phase transition observed in the case $d=3$, $\alpha=1$ is actually a particular case of a more general phenomenon. More specifically, the main result of this paper is the following (see Theorem~\ref{main} for a more precise statement).
%
\begin{thm} \label{mainvague}
	Assume $d\le3\alpha$. There exist nonlinearity thresholds $0<\sigma_0\le\sigma_1$ and taming parameters $A,\gamma$ for which the following is true. 
	\begin{enumerate}[label=(\roman*)]
		\item (Regular and weakly nonlinear regimes) If $d<3\alpha$ or $d=\alpha$ with $|\sigma| \le \sigma_0$, then we can construct and normalise the fractional $\Phi^3_d$-measure in the form
		\begin{equation} \label{masstamingmeasure}
			\varrho(\rd u) = \cZ^{-1} \exp\Bigl(\frac\sigma3 \int_{\TT^d} u^3\, \rd x - A\Bigl|\int_{\TT^d} u^2\, \rd x\Bigr|^\gamma\Bigr)\, \mu(\rd u),
		\end{equation}
		up to renormalisation. If $d<3\alpha$, then $\varrho \ll \mu$, and if $d=3\alpha$, then $\varrho \perp \mu$.
		\item (Critical regime, strong nonlinearity) If $d=3\alpha$ and $|\sigma| \ge \sigma_1$, the Gibbs measure is not normalisable: there exists a suitable approximation $(\varrho_N)$ of $\varrho$ such that
		\begin{equation*}
			\varrho_N(\rd u) = \cZ_N^{-1} \exp(-V_N(u))\, \mu(\rd u)
		\end{equation*}
		with $V_N(u) \to V(u)$ for $\mu$-a.e. $u$, but $\cZ_N \to \infty$. Moreover, the sequence $(\varrho_N)$ has no weak limit (even up to a subsequence) as probability measures in an appropriate space of distributions.
		\end{enumerate}
\end{thm}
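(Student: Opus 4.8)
The plan is to follow the template set by \cite{OOT24} for $\Phi^3_3$, adapting each ingredient to the fractional setting with inverse covariance $(1-\Delta)^\alpha$ and to the critical scaling $d=3\alpha$. The two halves of the theorem require genuinely different tools: part (i) is an \emph{upper}-type bound (the partition function is finite, the measure is a well-defined probability), while part (ii) is a \emph{lower}-type bound (the partition function diverges) together with a tightness-failure argument.

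\textbf{Part (i), construction.} I would use the Bou\'e--Dupuis variational formula, writing
\begin{equation*}
	-\log \int \exp(-V_N(u))\, \mu(\rd u) = \inf_{\theta} \EE\Bigl[ V_N(W(1) + I(\theta)) + \tfrac12 \int_0^1 \norm{\theta(s)}{L^2}^2\, \rd s \Bigr],
\end{equation*}
where $W$ is the cylindrical Brownian motion associated to $\mu$ and $I(\theta)$ the drift. The potential is $V_N(u) = -\tfrac{\sigma}{3}\int_{\TT^d} (\Pi_N u)^3\, \rd x + A|\int_{\TT^d} \wick{(\Pi_N u)^2}\, \rd x|^\gamma$ with the Wick renormalisation subtracting the (divergent at $d=3\alpha$) variance, and in the subcritical range $d<3\alpha$ there is in fact no divergence to subtract and the argument is softer. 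The cubic term is split as $\int (W_N + I(\theta))^3 = \int W_N^3 + 3\int W_N^2 I(\theta) + 3 \int W_N I(\theta)^2 + \int I(\theta)^3$: the first is a random constant controlled by hypercontractivity, the worst term $\int \wick{W_N^2}\, I(\theta)$ needs a paracontrolled/Young-type product estimate placing $\wick{W_N^2}$ in $C^{-\kappa}$ (which requires $d<3\alpha$ for the relevant Sobolev exponent, and at $d=3\alpha$ forces the second renormalisation mentioned in the statement — subtracting a logarithmically divergent constant times $\|I(\theta)\|^2$), and the remaining terms are absorbed into the coercive pieces $A|\cdots|^\gamma$ and $\int_0^1 \norm{\theta}{L^2}^2$, using that the mass taming dominates a small power of $\int I(\theta)^3$ and that for $|\sigma|\le\sigma_0$ the sign/size works out. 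Choosing $\gamma$ large enough (and $A$ large enough) relative to the GNS-type exponent $\tfrac{(p-2)d}{2\alpha}$ at $p=3$ makes the variational problem bounded below uniformly in $N$, giving a uniform partition-function bound and, via the same analysis applied to differences, convergence of $(\varrho_N)$; singularity with respect to $\mu$ when $d=3\alpha$ comes from showing that the second renormalisation constant genuinely diverges, so the Radon--Nikodym densities cannot converge in $L^1(\mu)$ (one exhibits an observable with different limiting laws under $\varrho$ and $\mu$).

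\textbf{Part (ii), non-normalisability.} Here I would produce a near-optimiser of the fractional GNS inequality
\begin{equation*}
	\int_{\RR^d} |u|^3\, \rd x \le C\, \norm{(-\Delta)^{\alpha/2} u}{L^2}^{\frac{d}{2\alpha}}\, \norm{u}{L^2}^{3 - \frac{d}{2\alpha}}
\end{equation*}
(noting that at $d=3\alpha$ the exponent $\tfrac{d}{2\alpha} = \tfrac32$, so both powers are nontrivial and the mass-taming power $\gamma$ competes with a fixed power of the mass), scale and localise it to the torus at a small parameter $\delta$, and plug a shifted Gaussian $u = W(1) + \phi_\delta$ into the Bou\'e--Dupuis functional as a \emph{trial drift} $\theta$. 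For $|\sigma|$ large the gain $\tfrac{|\sigma|}{3}\int |\phi_\delta|^3 \sim |\sigma|\,\delta^{-a}$ beats the cost $\tfrac12\norm{(-\Delta)^{\alpha/2}\phi_\delta}{L^2}^2 + A(\int \phi_\delta^2)^\gamma + (\text{cross terms}) \sim \delta^{-a} + A\,\delta^{-b\gamma}$ provided the scaling exponents match at criticality — this is exactly where $d=3\alpha$ is used — and provided $\gamma$ is chosen so that $b\gamma \le a$; sending $\delta\to0$ then drives the variational infimum to $-\infty$, i.e. $\cZ_N\to\infty$ (after checking the contribution of the random part $W(1)$ and its interaction with $\phi_\delta$ is lower order, again by hypercontractivity and the product estimates). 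The convergence $V_N(u)\to V(u)$ $\mu$-a.s. is the easy part (second renormalisation removes the only divergence). For the absence of a weak limit, I would argue that tightness in, say, $H^{-s}(\TT^d)$ would force (by the partition-function lower bound built from concentrating profiles) the measures $\varrho_N$ to put asymptotically full mass on sets of the form $\{\norm{u - (\text{translate of }\phi_\delta)}{} \le r\}$ shrinking toward a single-bubble configuration, whose location can be anywhere on $\TT^d$; a limit point would then be translation-concentrated, contradicting its being a probability measure (the mass escapes to "infinitely sharp spikes"), so no subsequential weak limit exists.

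\textbf{Main obstacle.} The crux is the borderline product/regularity estimate at $d=3\alpha$: identifying the precise second renormalisation constant and showing the interaction term $\int \wick{W_N^2}\, I(\theta)\, \rd x$ can be estimated — after subtracting its divergent expectation against $\|I(\theta)\|_{L^2}^2$ — by $o(1)\,\norm{I(\theta)}{H^\alpha}^2 + o(1)\,\norm{\theta}{L^2_{t,x}}^2 + C_\sigma$, uniformly in $N$, with a constant $C_\sigma$ that stays bounded for $|\sigma|\le\sigma_0$ but whose analogue in the trial-function computation for part (ii) is beaten by the GNS gain when $|\sigma|\ge\sigma_1$. Matching these two thresholds to the \emph{same} choice of $(A,\gamma)$, and verifying the fractional GNS optimiser has enough regularity and decay to be localised without losing the critical scaling, is the delicate bookkeeping that the fractional Laplacian makes more involved than the $\alpha=1$ case (no explicit optimiser, only existence and decay from the Euler--Lagrange equation for $(-\Delta)^\alpha Q + Q = Q^2$).
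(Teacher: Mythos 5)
Your outline for Part~(i) is essentially the paper's strategy: Bou\'e--Dupuis, identify the divergent drift cross-term $\EE\int\wick{Y_N^2}\,\Theta_N$ as the source of the second renormalisation, absorb the cubic via interpolation into the coercive mass taming and the $\frac12\int\|\theta\|_{L^2}^2$ cost, and show singularity by exhibiting an observable with divergent behaviour (the paper's Proposition~3.8 uses $(\log N_k)^{-3/4}(V_{N_k}-\beta_{N_k})$). You are a bit vague on what makes the critical case close: the paper's Lemma~3.4 is not a paraproduct bound on $\wick{W_N^2}$, but a genuinely different device --- an orthogonal decomposition of the drift $\Ups_N$ along the dyadic blocks of $Y_N$ which converts $\|\Ups_N\|_{L^2}^2$ into the \emph{tamed} quantity $|\int(2Y_N\Ups_N+\Ups_N^2)|$ plus subcritical remainders. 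That is the lemma on which the smallness constraint on $\sigma_0$ actually hinges, and it is not something a Besov product estimate gives you.

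The serious gap is in Part~(ii). Plugging a scaled GNS near-optimiser $\phi_\delta$ into Bou\'e--Dupuis as a trial drift cannot by itself produce $\cZ_N\to\infty$ at $d=3\alpha$, because the mass taming is of the \emph{same} order as the gain. Concretely, with $u=R\phi_\delta$, $\delta^{-1}=M$, one has $\int u^3\sim R^3 M^{d/2}$, $\|u\|_{H^\alpha}^2\sim R^2 M^{2\alpha}$, and $A|\int u^2|^\gamma\sim AR^{2\gamma}$; at $d=3\alpha$ and $\gamma\ge d/(d-2\alpha)=3$ the best choice $R\sim M^{\alpha/2}$ makes all three of these $\sim M^{3\alpha}$. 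The $H^\alpha$-cost has fixed constant $\frac12$ and can be beaten by taking $|\sigma|$ large, but the taming has an \emph{arbitrary} constant $A$ and an \emph{arbitrary} exponent $\gamma\ge 3$, so no choice of $|\sigma|$ dominates it. (Your clause ``provided $\gamma$ is chosen so that $b\gamma\le a$'' is precisely the constraint that fails, and it would also contradict the theorem's claim for all $\gamma\ge d/(d-2\alpha)$.) The missing idea is the paper's drift $\und\Ups_N = -Z_M + \sgn\sigma\,\sqrt{\kappa_M}\,f_M$, where $Z_M\approx Y(1/2)$ is a Gaussian-field replica. The $-Z_M$ component cancels the Gaussian contribution so that the Wick-ordered mass $\int(2Y_N\und\Ups_N+\und\Ups_N^2)\,\rd x$ is $O(1)$ (the three $\kappa_M$'s cancel; see Lemma~\ref{ZMest}), making the taming term $A|O(1)|^\gamma=O(1)$ \emph{regardless} of $A$ and $\gamma$. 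Only then does the $|\sigma|M^{3\alpha}$ cubic gain cleanly beat the remaining $M^{3\alpha}$ drift cost. Your profile intuition is correct --- $\sqrt{\kappa_M}f_M$ does saturate fractional GNS --- but without this cancellation the argument does not close. Finally, the ``no weak limit'' step is not a tightness/concentration-compactness argument as you sketch; the paper instead constructs the auxiliary tilted measures $\vartheta_\delta^N$ and shows (Lemma~\ref{vartheta^N}, Proposition~\ref{noweaklimits}) that a weak limit $\nu$ of $\varrho_N$ would force $\varrho^\delta=\cZ\cZ_\delta^{-1}\nu$ to be finite, contradicting Proposition~\ref{nonnormalisability}.
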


The measures $\varrho$ in Theorem~\ref{mainvague} are realised as limits of  approximate (truncated) measures $\varrho_N$. In the case $d < 3\alpha$, step (1) below, together with dominated convergence, is enough to construct $\varrho$. At $d = 3\alpha$, however, we require a further renormalisation, and so $\varrho$ is realised as a weak limit of the $\varrho_N$, using a variational approach as carried out in \cite{BG20}. The proof outline in this case is as follows.
\begin{enumerate}[label=\arabic*)]
	\item (Uniform exponential integrability) Prove the following uniform boundedness of the partition functions $\cZ_N \coloneq \varrho_N(\cD'(\TT^d))$:
	\begin{equation*}
		\sup_{N\in\NN}\cZ_N\le C<\infty;
	\end{equation*}
	\item (Compactness) Prove tightness of the truncated measures $\{\varrho_N:N\in\NN\}$;
	\item (Unique limits) By (2) and Prokhorov's theorem \cite[Theorem~8.6.2]{Bog07}, any subsequence of $(\varrho_N)$ has a further subsequence which is convergent; proving uniqueness of limits allows us to conclude that the overall sequence converges to this same limit;
	\item (Singularity) Prove that $\varrho$ is mutually singular with respect to $\mu$.
\end{enumerate}
A recurring tool in proving relevant estimates is the Bou\'e-Dupuis variational formula, used as in \cite{BG20}. See Lemma~\ref{b-d} in Subsection~\ref{stochastic tools}. 
\begin{rmk}
	There is a rich literature studying the dynamical problem associated to $\Phi^p_d$-measures, which arise as invariant measures for Hamiltonian systems. In our case, for example, we can consider the following fractional stochastic damped nonlinear wave equation,
	\begin{equation} \label{stochquanteq}
		\pd_t^2 u+\pd_t u+(1-\Delta)^\alpha u-\sigma u^2 = \sqrt2\xi,
	\end{equation}
	where $\xi$ is space-time white noise. For \eqref{stochquanteq}, the measure $\varrho\otimes\mu_0$, with $\mu_0$ denoting the white noise measure, is formally invariant. By this we mean that $\Phi(t,\cdot)_\#(\varrho\otimes\mu_0) = \varrho\otimes\mu_0$ for all $t\ge 0$, where $\Phi$ denotes the flow map for \eqref{stochquanteq}. From the point of view of stochastic quantisation, equation \eqref{stochquanteq} is the canonical stochastic quantisation equation. Modulo proving local well-posedness, one can exploit similar invariance to obtain global dynamics for associated equations. See \cite{OOT24,GKOT21,GKO24} for wave dynamics, including globalisation as described above and paracontrolled arguments to prove local well-posedness. 
\end{rmk}
%

\section{Preliminaries}
In this section we collect notation to be used in the rest of the paper, as well as useful estimates. Hereon and unless otherwise stated, we write function spaces over the torus $X(\TT^d)$ as $X$. \vskip10pt

\subsection{Notation.} \label{notation subsection}
The majority of our notations will be kept consistent with \cite{OOT24}.\vspace{\baselineskip}

Subscripts in $N$ will denote frequency truncations. Our sharp frequency projection will be
\begin{equation*}
	\pi_N f(x)=\sum_{|n|_\infty\le N}\hat{f}(n)\re^{2\pi \ri n\cdot x},
\end{equation*}
with $|\cdot|_\infty$ denoting $\norm{\,\cdot\,}{\ell^\infty(\{1,\ldots,d\})}$, being particularly useful in critical regimes, as it is bounded on Lebesgue spaces. We will also have use for smooth frequency projectors. To this end, let $\phi:\RR\to[0,1]$ be a smooth bump function with support in $[-\frac{8}{5},\frac{8}{5}]$ such that $\phi=1$ on $[-\frac{5}{4},\frac{5}{4}]$; for $\xi\in\RR^d$ set $\phi_0(\xi)=\phi(|\xi|)$ and $\phi_j(\xi)=\phi(2^{-j}|\xi|)-\phi(2^{-j+1}|\xi|)$, noting that $\sum_j \phi_j=1$. Recall the Besov spaces $B^s_{p,q}$ equipped with norm
\begin{equation} \label{besovnorm}
	\norm{u}{B^s_{p,q}}=\norm{\norm{2^{sj}\phi_j(\nabla)u}{L^p}}{\ell^q_j(\NN\cup\{0\})}=\Bigl(\sum_{j=0}^\infty \norm{2^{sj}\phi_j(\nabla)u}{L^p}^q\Bigr)^{\frac{1}{q}}, 
\end{equation}
where $\phi(\nabla)$ is a Fourier multiplier with symbol $\phi$. Denote by $\cC^s$ the H\"older-Besov space $B^s_{\infty,\infty}$ and note that $H^s=B^s_{2,2}$ by Plancherel. \vspace{\baselineskip}

Fix $\alpha\in\RR$. We will denote by $\mu$ a centred Gaussian measure with covariance $(1-\Delta)^{-\alpha}$ and Cameron-Martin space $H^\alpha$, realised on distributions $\cD'$ (or $\cC^{\alpha-\frac d2-\eps}$ for any $\eps>0$). The measure $\mu$ has a series representation. Let $\xi$ be Gaussian space-time white noise on a probability space $(\Omega,\PP)$. For $n\in\ZZ^d$, let $B_n=\innerprod{\xi}{\1_{[0,t]}\re_n}{x,t}$ so that---where $\Lambda$ is the index set $\bigcup_{j=1}^d \ZZ^{j-1}\times\NN\times\{0\}^{d-j}$---we have $(B_n)_{\Lambda\cup\{0\}}$ i.i.d. and $B_{-n}=\bar{B_n}$. The $B_n$ are i.i.d. standard complex Brownian motions, i.e. $\Re\,B_n(1)\sim\Im\,B_n(1)\sim\mathscr{N}_\CC(0,\frac12)$. The cylindrical process
\begin{equation}
	Y(x,t;\omega)=\sum_{n\in\ZZ^d} \frac{B_n(t;\omega)}{\ja{n}^\alpha}\re^{2\pi in\cdot x}
\end{equation}
has ${\rm Law}\,Y(\cdot,1)=\mu$ supported in $\cC^{\alpha-\frac d2-\eps}$ for any $\eps>0$. In the regime $d\ge 2\alpha$, the random series $Y$ is typically distribution-valued, and so we cannot make sense of the powers $Y^j$. For this reason, we renormalise via Wick powers $\wick{Y^j_N}\,$, defined as $H_j(Y_N;\sigma_N)$, where $H_j$ is the $j$-th Hermite polynomial and 
\begin{equation} \label{gaussianseriesvargrowth}
	\sigma_N=\EE|Y_N(x,1)^2| \sim 
	\begin{cases} 
		N^{d-2\alpha}, & \text{if}\ d\ne 2\alpha, \\ 
		\log N, & \text{if}\ d=2\alpha
	\end{cases}
\end{equation}
independently of $x\in\TT^d$. Note that, when $d\ge 2\alpha$, one has $\sigma_N\to\infty$. See Lemma~\ref{pathwreg} for more details. Define the closure of polynomial chaoses $\cH_j$ in $L^2$ and let $\cH_{\le k}=\bigoplus_j \cH_j$. \vskip10pt

The potentials of interest will be functionals $V_N:\cD'\to\CC$ of the following forms, naturally following \eqref{masstamingmeasure}:
\begin{equation} \label{masstamingpotential}
	V_N(u) = -\frac\sigma3\int_{\TT^d} p_3(u_N)\, \rd x+A\Big|\int_{\TT^d} p_2(u_N)\, \rd x\Big|^\gamma+\beta_N, 
\end{equation}
where the $p_i$ carry Wick renormalisations where necessary, and the $\beta_N$ allow us to introduce further renormalisations. We aim to obtain $\varrho$ as in \eqref{masstamingmeasure} as a weak limit of measures
\begin{equation} \label{truncatedmeasure}
	\varrho_N(u)=\cZ_N^{-1}\exp(-V_N(u))\, \mu(\rd u), \qquad \cZ_N=\int_{\cD'} \exp(-V_N(u))\, \mu(\rd u). 
\end{equation}
\subsection{Deterministic estimates.}
One has the following well-known estimates for Sobolev and Besov spaces (see, e.g., \cite[Chapters~1 and 2]{BCD11}).
\begin{lemma}[Besov estimates] \label{besovest} The following estimates hold.
	\begin{enumerate}[label=(\alph*)]
		\item (Interpolation) Let $s,s_1,s_2\in\RR$ and $p,p_1,p_2\in\RR$ be such that $s=\theta s_1+(1-\theta)s_2$ and $p^{-1}=\theta p_1^{-1}+(1-\theta)p_2^{-1}$ for some $\theta\in[0,1]$; then
		\begin{equation} \label{sobolevinterpolation}
			\norm{u}{W^{s,p}}\lesssim \norm{u}{W^{s_1,p_1}}^\theta\norm{u}{W^{s_2,p_2}}^{1-\theta}. 
		\end{equation}
		\item (Immediate embeddings) Let $s_1,s_2\in\RR$ and $p_1,p_2,q_1,q_2\in[1,\infty]$; then
		\begin{equation} \label{immediatebesovembeddings} \begin{aligned}
			\norm{u}{B^{s_1}_{p_1,q_1}}&\lesssim \norm{u}{B^{s_2}_{p_2,q_2}}, \qquad s_1\le s_2,p_1\le p_2,q_1\ge q_2,\\
			\norm{u}{B^{s_1}_{p_1,q_1}}&\lesssim \norm{u}{B^{s_2}_{p_2,\infty}}, \qquad s_1< s_2, \\
			\norm{u}{B^0_{p_1,\infty}}&\lesssim \norm{u}{L^{p_1}}\lesssim \norm{u}{B^0_{p_1,1}}.
		\end{aligned}  \end{equation}
		Moreover the second embedding is compact.
		\item (Duality) Let $\int_{\TT^d} uv\, \rd x$ denote the Besov space duality pairing; let $s\in\RR$, and $1\le p,q\le \infty$; then
		\begin{equation} \label{besovduality}
			\Big|\int_{\TT^d} uv\, \rd x\Big| \le \norm{u}{B^s_{p,q}}\norm{v}{B^{-s}_{p',q'}}. 
		\end{equation}
		\item (Besov embedding) Let $1\le p_2\le p_1\le \infty$, $q\in[1,\infty]$, and $s_2\ge s_1+d(p_2^{-1}-p_1^{-1})$; then
		\begin{equation} \label{besovembeddings}
			\norm{u}{B^{s_1}_{p_1,q}} \lesssim \norm{u}{B^{s_2}_{p_2,q}}. 
		\end{equation}
		\item (Fractional Leibniz rule) Let $p,p_1,p_2,p_3,p_4\in[1,\infty]$ be such that $p^{-1}=p_j^{-1}+p_{j+1}^{-1}$ for $j=1,3$; then, for every $s>0$ and $q\in[1,\infty]$,
		\begin{equation} \label{besovfractionalleibniz}
			\norm{uv}{B^s_{p,q}} \lesssim \norm{u}{B^s_{p_1,q}}\norm{v}{L^{p_2}}+\norm{u}{L^{p_3}}\norm{v}{B^s_{p_4,q}}. 
		\end{equation}
	\end{enumerate}
\end{lemma}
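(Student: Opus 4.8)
The plan is to obtain each of (a)--(e) directly from the Littlewood--Paley blocks $\phi_j(\nabla)u$ entering the definition \eqref{besovnorm}, using as the only genuine inputs H\"older's inequality in the physical variable $x$, H\"older's inequality in the dyadic index $j$, and Bernstein's inequality on a frequency annulus; everything else is bookkeeping. I would state the proofs tersely and refer to \cite[Chapters~1 and 2]{BCD11} for the standard Littlewood--Paley and paraproduct estimates.

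For (a): interpolating Lebesgue norms blockwise gives $\norm{\phi_j(\nabla)u}{L^p}\le\norm{\phi_j(\nabla)u}{L^{p_1}}^\theta\norm{\phi_j(\nabla)u}{L^{p_2}}^{1-\theta}$, and since $2^{sj}=(2^{s_1j})^\theta(2^{s_2j})^{1-\theta}$, a further H\"older in $\ell^q_j$ with exponents $1/\theta$ and $1/(1-\theta)$ yields the Besov version; the Sobolev statement \eqref{sobolevinterpolation} is then the standard complex-interpolation identity for Bessel-potential spaces. For (b): on $\TT^d$ each block lies in a finite-volume $L^p$-space, so $\norm{\phi_j(\nabla)u}{L^{p_1}}\lesssim\norm{\phi_j(\nabla)u}{L^{p_2}}$ when $p_1\le p_2$, which together with $2^{s_1j}\le2^{s_2j}$ for $j\ge0$ (using $s_1\le s_2$) and $\ell^{q_2}\hookrightarrow\ell^{q_1}$ (using $q_1\ge q_2$) gives the first line of \eqref{immediatebesovembeddings}. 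For the second line the strict gain $s_1<s_2$ supplies a summable geometric factor $2^{(s_1-s_2)j}$ which absorbs both the passage from $\ell^\infty_j$ to $\ell^{q_1}_j$ and, via Bernstein as in (d), any Lebesgue-exponent loss. The inclusions $B^0_{p_1,\infty}\hookleftarrow L^{p_1}\hookleftarrow B^0_{p_1,1}$ are immediate from $\sum_j\phi_j\equiv1$, boundedness of $\phi_j(\nabla)$ on $L^{p_1}$, and the triangle inequality. Compactness of the second embedding I would get by a diagonal extraction: a bounded sequence in $B^{s_2}_{p_2,\infty}$ has low-frequency truncations $\sum_{j\le J}\phi_j(\nabla)u_n$ ranging over a finite-dimensional space (only finitely many Fourier modes on $\TT^d$), hence precompact, while the high-frequency tails are uniformly $O(2^{-cJ})$ in $B^{s_1}_{p_1,q_1}$ by the same geometric factor.

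For (c): expand $\int_{\TT^d}uv\,\rd x=\sum_{j,k}\int_{\TT^d}\phi_j(\nabla)u\,\phi_k(\nabla)v\,\rd x$, observe the summand vanishes unless $|j-k|\le1$ by disjointness of Fourier supports, then apply H\"older in $x$ with exponents $p,p'$ followed by H\"older in $j$ with exponents $q,q'$ to reach \eqref{besovduality}. For (d): Bernstein gives $\norm{\phi_j(\nabla)u}{L^{p_1}}\lesssim2^{jd(1/p_2-1/p_1)}\norm{\phi_j(\nabla)u}{L^{p_2}}$ when $p_2\le p_1$, and the hypothesis $s_2\ge s_1+d(1/p_2-1/p_1)$ turns $2^{s_1j}2^{jd(1/p_2-1/p_1)}$ into at most $2^{s_2j}$; taking $\ell^q_j$ gives \eqref{besovembeddings}. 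For (e): decompose $uv=T_uv+T_vu+R(u,v)$ into the two paraproducts and the resonant term; the paraproducts are controlled by H\"older (one factor kept in $L^{p_2}$ or $L^{p_3}$, the other retaining its $B^s$-regularity), and $R(u,v)$ is summed using Bernstein together with $s>0$, producing exactly the two summands on the right-hand side of \eqref{besovfractionalleibniz}.

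The only step carrying content beyond rearranging \eqref{besovnorm} is the compactness claim in (b), and even there the argument is the routine split into a finite-dimensional low-frequency part plus a geometrically small tail; I expect no real obstacle, and would simply cite the corresponding statements (adapted to $\TT^d$) in \cite[Chapters~1 and 2]{BCD11}.
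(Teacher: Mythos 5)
The paper does not prove Lemma~\ref{besovest}; it records these as well-known estimates and simply cites \cite[Chapters~1 and 2]{BCD11}. Your Littlewood--Paley sketch correctly reconstructs the standard textbook arguments that citation points to (blockwise H\"older, Bernstein, Bony paraproducts, and, for \eqref{sobolevinterpolation}, complex interpolation of Bessel-potential spaces), so the approaches coincide.
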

\begin{lemma}[A Schauder estimate] \label{schauder}
	Let $(p_t)_{t>0}$ be the heat kernel. Let $s\ge 0$ and $p,q\in\RR$ have $1\le p\le q\le\infty$. Then
	\begin{equation} \label{schaudereq}
		\norm{p_t\ast u}{L^q(\TT^d)} \lesssim_{s,p,q} t^{-\frac{s}{2}-\frac{d}{2}(\frac{1}{p}-\frac{1}{q})}\norm{u}{W^{-s,p}(\TT^d)}. 
	\end{equation}
\end{lemma}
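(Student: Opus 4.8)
The plan is to prove \eqref{schaudereq} by a Littlewood--Paley decomposition, exploiting that the heat semigroup $\re^{t\Delta}=p_t\ast\,\cdot\,$ contracts sharply on each dyadic frequency block while Bernstein's inequality supplies the $L^p\to L^q$ gain. Write $p_t\ast u=\sum_{j\ge 0}\re^{t\Delta}\phi_j(\nabla)u$ and estimate block by block, using two elementary facts. First, Bernstein: for $1\le p\le q\le\infty$,
\begin{equation*}
	\norm{\phi_j(\nabla)v}{L^q}\lesssim 2^{jd(\frac1p-\frac1q)}\norm{\phi_j(\nabla)v}{L^p}.
\end{equation*}
Second, sharp heat decay on a block: for $j\ge 1$,
\begin{equation*}
	\norm{\re^{t\Delta}\phi_j(\nabla)v}{L^p}\lesssim \re^{-ct2^{2j}}\norm{\phi_j(\nabla)v}{L^p},
\end{equation*}
together with the trivial bound $\norm{\re^{t\Delta}\phi_0(\nabla)v}{L^p}\lesssim\norm{\phi_0(\nabla)v}{L^p}$ on the low block. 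Both follow by writing the operator as convolution against a kernel, rescaling to unit frequency via $\xi\mapsto 2^{-j}\xi$, and bounding the $L^1$-norm of the resulting (torus-periodised, but for $t\le 1$ comparable to the Euclidean) kernel; the annular support of $\phi_j$ is what produces the Gaussian factor $\re^{-ct2^{2j}}$ after splitting off a fraction of the exponent before the change of variables.

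Next I would pass from the negative-order Sobolev norm to frequency-localised pieces: since the Fourier multiplier with symbol $\phi_j(\xi)\ja{\xi}^s$ equals $2^{js}$ times a symbol obeying Mikhlin-type bounds on $|\xi|\sim 2^j$ uniformly in $j$, one has
\begin{equation*}
	\norm{\phi_j(\nabla)u}{L^p}\lesssim 2^{js}\norm{\ja{\nabla}^{-s}u}{L^p}=2^{js}\norm{u}{W^{-s,p}}
\end{equation*}
(equivalently, $W^{-s,p}\hookrightarrow B^{-s}_{p,\infty}$). Combining the three displays,
\begin{equation*}
	\norm{p_t\ast u}{L^q}\lesssim\norm{u}{W^{-s,p}}\sum_{j\ge 0}2^{j\theta}\re^{-ct2^{2j}},\qquad \theta\coloneq s+d\Bigl(\tfrac1p-\tfrac1q\Bigr)\ge 0.
\end{equation*}
When $\theta>0$, comparison of $\sum_{j\ge 0}2^{j\theta}\re^{-ct2^{2j}}$ with $\int_1^\infty\lambda^\theta\re^{-ct\lambda^2}\,\frac{\rd\lambda}{\lambda}$ and the substitution $\nu=ct\lambda^2$ yields a bound $\lesssim_\theta t^{-\theta/2}=t^{-\frac s2-\frac d2(\frac1p-\frac1q)}$, which is precisely the claim. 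The only degenerate case is $\theta=0$, which forces $s=0$ and $p=q$; there the Littlewood--Paley bound loses a logarithm, so in that single case I would instead estimate directly $\norm{p_t\ast u}{L^p}\le\norm{p_t}{L^1(\TT^d)}\norm{u}{L^p}=\norm{u}{L^p}$ using Young's inequality and positivity of the heat kernel, matching the exponent $t^0$. (It suffices to treat $t\in(0,1]$, the regime used below; for $t\ge 1$ the stated power of $t$ is simply replaced by an $s,p,q$-dependent constant, which does not affect the applications.)

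The argument carries no genuine difficulty --- it is a standard parabolic smoothing estimate --- but the points requiring care are (a) making both the per-block heat decay and the $\phi_j(\nabla)\ja{\nabla}^s$ multiplier bound \emph{uniform in the dyadic scale $j$} in the periodic setting, handled by rescaling and by noting that for $t\le 1$ the periodised Gaussian is comparable to its Euclidean counterpart, and (b) extracting the \emph{precise} exponent $-\frac s2-\frac d2(\frac1p-\frac1q)$ from the dyadic sum, in particular isolating the borderline $\theta=0$ case. An alternative route that sidesteps Littlewood--Paley is to write $p_t\ast u=G_{t,s}\ast(\ja{\nabla}^{-s}u)$ with $G_{t,s}$ the kernel of $\re^{t\Delta}\ja{\nabla}^s$, apply Young's inequality with $\tfrac1r=1-(\tfrac1p-\tfrac1q)$ to obtain $\norm{p_t\ast u}{L^q}\le\norm{G_{t,s}}{L^r(\TT^d)}\norm{u}{W^{-s,p}}$, and then compute $\norm{G_{t,s}}{L^r(\TT^d)}\lesssim t^{-\frac s2-\frac d2(1-\frac1r)}=t^{-\frac s2-\frac d2(\frac1p-\frac1q)}$ directly from the scaling $G_{t,s}(x)\approx t^{-(d+s)/2}g(x/\sqrt t)$ with $g$ a fixed Schwartz profile (valid for $t\le 1$); this avoids the logarithmic borderline at the cost of an explicit kernel computation.
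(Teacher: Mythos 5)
The paper states this lemma without proof; it is taken as a standard parabolic smoothing estimate, in the same circle of ideas as the Besov material cited from \cite{BCD11} for the preceding lemma, so there is no proof in the paper to compare against. Your Littlewood--Paley argument is the standard one and is correct. Two of your side remarks are worth underscoring because they are easy to overlook and both matter. First, as written the bound cannot hold for all $t>0$ on $\TT^d$: the zero Fourier mode of $u$ is invariant under $p_t\ast\,\cdot\,$, so $\norm{p_t\ast u}{L^q}$ stays bounded below while the right-hand side of \eqref{schaudereq} tends to $0$. Your restriction to $t\in(0,1]$ is therefore a necessity, not a convenience, and it is consistent with every use of the lemma in the paper, all of which occur inside $\sup_{0<t\le 1}$ through the $\cA$-norm \eqref{anormdefn}. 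Second, the separate treatment of the borderline $\theta=0$ (i.e.\ $s=0$, $p=q$) is genuinely required: the dyadic sum $\sum_{j\ge 0}\re^{-ct2^{2j}}$ is of size $\log(1/t)$, and only the direct Young-inequality bound $\norm{p_t\ast u}{L^p}\le\norm{p_t}{L^1}\norm{u}{L^p}$ recovers the stated $t^0$.

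One small caveat on your alternative route via $p_t\ast u=G_{t,s}\ast\ja{\nabla}^{-s}u$: the claim $G_{t,s}(x)\approx t^{-(d+s)/2}g(x/\sqrt t)$ with $g$ a fixed Schwartz profile is a useful heuristic but not literally accurate for $s>0$, since the rescaled symbol $(t+|\xi|^2)^{s/2}\re^{-|\xi|^2}$ converges as $t\downarrow 0$ to $|\xi|^s\re^{-|\xi|^2}$, which is not smooth at the origin for non-even $s$; the limiting profile therefore has only algebraic (not rapid) decay. It still lies in every $L^r$, $1\le r\le\infty$, so the conclusion $\norm{G_{t,s}}{L^r}\lesssim t^{-\frac s2-\frac d2(1-\frac1r)}$ survives, but the Littlewood--Paley proof is the cleaner way to get the uniform-in-$j$ kernel bounds on the torus, which is why it is the route you should prefer if a proof were to be included.
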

\begin{lemma}[On discrete convolutions] \label{discrconv}
	Let $a,b\in\RR$ have $a+b>d$, $a<d$. Then
	\begin{equation} \label{discrconveq}
		\sum_{m\in\ZZ^d}\frac{1}{\ja{m}^a\ja{n-m}^b}\lesssim \frac{1}{\ja{n}^{a-\lambda}} 
	\end{equation}
	for any $n\in\ZZ^d$, where $\lambda=\max\{d-b,0\}$ when $b\ne d$ and $\lambda>0$ when $b=d$ (i.e. $\lambda$ is allowed to be arbitrarily small in the latter case).
\end{lemma}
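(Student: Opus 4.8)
The plan is to estimate the sum by an elementary (Schur-type) region decomposition of $m\in\ZZ^d$ according to the relative sizes of $\ja m$, $\ja{n-m}$ and $\ja n$. First I would record the summation facts used throughout: for any $s<d$ one has $\sum_{|m|_\infty\le R}\ja m^{-s}\lesssim_s (1+R)^{d-s}$ (this includes negative $s$, for which $\ja m^{-s}$ is increasing, by comparison with dyadic annuli); for $s=d$, $\sum_{|m|_\infty\le R}\ja m^{-s}\lesssim\log(2+R)$; and for $s>d$, both $\sum_{|m|_\infty\ge R}\ja m^{-s}\lesssim_s (1+R)^{d-s}$ and $\sum_{|m|_\infty\le R}\ja m^{-s}\lesssim 1$. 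I would also note at the outset that the hypotheses $a<d$ and $a+b>d$ force $b>0$, so $\ja{n-m}^{-b}$ is a genuinely decaying weight.

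Since for $n$ in any bounded subset of $\ZZ^d$ the left-hand side is bounded uniformly (crudely: the tail $|m|_\infty\ge 2|n|_\infty$, on which $\ja{n-m}^{-b}\sim\ja m^{-b}$, is controlled by $\sum_m\ja m^{-a-b}<\infty$ using $a+b>d$, and the remaining finitely many terms trivially) while the right-hand side is $\sim 1$ there, I may assume $|n|_\infty\ge 1$. Then I would split the sum into the regions $|m|_\infty\le\tfrac12|n|_\infty$, $|n-m|_\infty\le\tfrac12|n|_\infty$, and the complement of their union. On the first, $|n-m|_\infty\gtrsim|n|_\infty$, so (using $b>0$) the contribution is $\lesssim\ja n^{-b}\sum_{|m|_\infty\le|n|_\infty/2}\ja m^{-a}\lesssim\ja n^{-b}\ja n^{d-a}$, where $a<d$ is used. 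On the second, $|m|_\infty\sim|n|_\infty$, so the contribution is $\lesssim\ja n^{-a}\sum_{|m'|_\infty\le|n|_\infty/2}\ja{m'}^{-b}$, which is $\lesssim\ja n^{d-a-b}$ if $b<d$, $\lesssim\ja n^{-a}\log\ja n$ if $b=d$, and $\lesssim\ja n^{-a}$ if $b>d$. On the complement both $|m|_\infty\gtrsim|n|_\infty$ and $|n-m|_\infty\gtrsim|n|_\infty$; I would split it further into the far tail $|m|_\infty\ge 2|n|_\infty$, where $|n-m|_\infty\sim|m|_\infty$ so the contribution is $\lesssim\sum_{|m|_\infty\ge 2|n|_\infty}\ja m^{-a-b}\lesssim\ja n^{d-a-b}$ (using $a+b>d$), and the annular piece $\tfrac12|n|_\infty<|m|_\infty<2|n|_\infty$, where $\ja m^{-a}\sim\ja n^{-a}$ and $\tfrac12|n|_\infty\lesssim|n-m|_\infty\lesssim|n|_\infty$, so the contribution is $\lesssim\ja n^{-a}\sum_{\frac12|n|_\infty<|m'|_\infty\lesssim|n|_\infty}\ja{m'}^{-b}\lesssim\ja n^{-a}\ja n^{d-b}$.

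To conclude I would check that every bound obtained, namely $\ja n^{d-a-b}$, or $\ja n^{-a}\log\ja n$, or $\ja n^{-a}$, is $\lesssim\ja n^{-(a-\lambda)}$ for the prescribed $\lambda$ (recall $\ja n\ge 1$): if $b<d$ then $d-a-b=-(a-\lambda)$; if $b>d$ then $\lambda=0$ and $d-a-b\le -a$; and if $b=d$ then $\lambda>0$ is arbitrary and $\ja n^{-a}\log\ja n\lesssim_\lambda\ja n^{-(a-\lambda)}$. Adding the regions then gives the claim. I do not expect a genuine obstacle here, since this is a routine dyadic estimate, but the point requiring care is the sign of $a$: as $a$ is only assumed $<d$ it may be negative, in which case $\ja m^{-a}$ is increasing in $|m|_\infty$. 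This is why the summation facts above are stated for all $s<d$, and why in the third region one separates the far tail (where $|n-m|_\infty\sim|m|_\infty$ makes $a+b>d$ directly usable) from the annular piece, rather than trying to dominate $\ja m^{-a}$ by $\ja n^{-a}$ everywhere.
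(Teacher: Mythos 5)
The paper does not actually supply a proof of this lemma: it is stated as a standard preliminary fact (alongside Lemmas~\ref{besovest} and~\ref{schauder}) and used without proof, so there is no paper argument to compare against. Judged on its own, your Schur-type region decomposition is correct and complete. The three regions $|m|_\infty\le\frac12|n|_\infty$, $|n-m|_\infty\le\frac12|n|_\infty$, and the complement (further split into the far tail $|m|_\infty\ge2|n|_\infty$ and the intermediate annulus) are estimated correctly, the summation facts you invoke for $s<d$, $s=d$, $s>d$ are the standard dyadic ones, and your bookkeeping of the three cases $b<d$, $b=d$, $b>d$ against the target exponent $a-\lambda$ is accurate, including the logarithmic loss absorbed into $\ja n^{\lambda}$ when $b=d$. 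You are also right to flag that $a$ may be negative so that $\ja m^{-a}$ can be increasing, and your handling of the far tail via $\ja{n-m}\sim\ja m$ together with $a+b>d$ is exactly the right way around this; the observation that the hypotheses force $b>0$, so that $\ja{n-m}^{-b}$ genuinely decays, is likewise needed and correctly used. This is the standard direct proof of this discrete convolution estimate and there is no gap.
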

\subsection{Stochastic tools.} \label{stochastic tools}
Below we state several stochastic lemmas. The first two are properties of Hermite polynomials, while the latter is the Bou\'e-Dupuis variational formula, and will be central to our analysis in the following sections.
\begin{lemma}[Gaussian moment bound] \label{hypercontractivity}
Let $k\ge 1$ be an integer. For any $X\in \cH_{\le k}$, we have
	\begin{equation} \label{hypercontractivityeq}
		\EE|X|^p\le ((p-1)^{k}\EE|X|^2)^{\frac p2}. 
	\end{equation}
\end{lemma}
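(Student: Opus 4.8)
The plan is to deduce this from Nelson's hypercontractivity theorem for the Ornstein--Uhlenbeck semigroup, which I take as the single external ingredient (see, e.g., Nelson's original paper or Janson's \emph{Gaussian Hilbert Spaces}). Recall that the OU semigroup $(T_\rho)_{\rho\in[0,1]}$ attached to the underlying Gaussian Hilbert space acts on the $n$-th chaos $\cH_n$ as multiplication by $\rho^n$ (the Mehler formula), is given by a positivity-preserving integral kernel, and is a contraction from $L^2$ to $L^p$ whenever $\rho^2(p-1)\le 1$; positivity of the kernel makes the contraction bound pass verbatim to complex-valued functions, which matters here since the $\cH_j$ are built from complex Brownian motions. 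Throughout I take $p\ge 2$: for $1\le p<2$ the stated inequality is actually false (already for a single real Gaussian with $k=1$, where the right-hand side involves $(p-1)^{p/2}\to 0$ as $p\to 1$), and only $p\ge 2$ is ever used in the sequel.

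Granting this, the argument is short. Decompose $X=\sum_{n=0}^{k}X_n$ with $X_n\in\cH_n$, so that by orthogonality of the chaoses in $L^2(\PP)$ one has $\EE|X|^2=\sum_{n=0}^{k}\EE|X_n|^2$. Set $\rho=(p-1)^{-1/2}$, which satisfies $\rho\le 1$ since $p\ge 2$, and put $Y=\sum_{n=0}^{k}\rho^{-n}X_n\in\cH_{\le k}$, so that $T_\rho Y=X$. Since $\rho^{-n}\le\rho^{-k}$ for $0\le n\le k$, hypercontractivity gives
\[
	\norm{X}{L^p}=\norm{T_\rho Y}{L^p}\le\norm{Y}{L^2}=\Bigl(\sum_{n=0}^{k}\rho^{-2n}\EE|X_n|^2\Bigr)^{1/2}\le\rho^{-k}\Bigl(\sum_{n=0}^{k}\EE|X_n|^2\Bigr)^{1/2}=(p-1)^{k/2}\norm{X}{L^2}.
\]
Raising this to the $p$-th power yields exactly \eqref{hypercontractivityeq}.

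The only genuine content is Nelson's theorem itself, so I expect no real obstacle at the level of this lemma; if a self-contained proof were wanted, the hard part would be establishing hypercontractivity, which one does either via Gross's route (the two-point logarithmic Sobolev inequality, tensorisation over coordinates, and a central limit theorem to pass to the Gaussian measure) or by a direct optimisation with the Mehler kernel. For this paper I would simply invoke it, noting that the membership hypothesis $X\in\cH_{\le k}$ is exactly what makes the chaos decomposition above available.
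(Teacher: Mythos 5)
The paper states this lemma as a background fact without supplying its own proof, so there is no in-text argument to compare against. Your derivation is the standard one: it reduces the moment bound to Nelson's hypercontractivity of the Ornstein--Uhlenbeck semigroup via the chaos decomposition, the choice $\rho=(p-1)^{-1/2}$, and the observation that $T_\rho$ is invertible on $\cH_{\le k}$ with $\lVert T_\rho^{-1}\rVert_{L^2\to L^2}\le\rho^{-k}$. The argument is correct as written, and you are right to flag the two points that a careless statement would gloss over: the implicit restriction to $p\ge 2$, which is how the lemma is in fact used throughout the paper (e.g.\ with $p=6$ for the quadratic Wick term, and with $p\ge 2$ in the tail bounds for $B_1,B_2$ in the proof of Lemma~\ref{phi33heart}), and the extension to complex-valued chaoses via positivity of the Mehler kernel, which matters because the paper's $\cH_j$ are built from complex Brownian motions $B_n$. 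Since the lemma is quoted rather than proved, the only sensible judgement is that your proof fills the gap cleanly by the route essentially every reference takes.
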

\begin{lemma}[Hermite orthogonality] \label{hermiteorthogonality}
	Let $f,g$ be jointly Gaussian with mean zero and variances $\sigma_f$, $\sigma_g$. Then, for any $k,\ell\ge 1$, we have
	\begin{equation} \label{hermiteorthogonalityeq}
		\EE[H_k(f;\sigma_f)H_\ell(g;\sigma_g)]=\delta_{k\ell}k!(\EE[fg])^k. 
	\end{equation}
\end{lemma}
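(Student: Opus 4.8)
The natural approach is the classical generating-function computation. The plan is to start from the exponential generating function of the variance-$\sigma$ Hermite polynomials,
\begin{equation*}
	\sum_{k=0}^\infty\frac{t^k}{k!}H_k(x;\sigma)=\exp\Bigl(tx-\frac{\sigma t^2}{2}\Bigr),
\end{equation*}
which holds for all $x,t\in\RR$ as an everywhere-convergent power series in $t$. First I would apply this with $(x,\sigma)=(f,\sigma_f)$ carrying a parameter $s$ and with $(x,\sigma)=(g,\sigma_g)$ carrying a parameter $t$, multiply the two identities, and take expectations. Since $(f,g)$ is centred and jointly Gaussian, $sf+tg$ is a centred Gaussian of variance $\sigma_f s^2+2st\,\EE[fg]+\sigma_g t^2$, so the right-hand side collapses to
\begin{equation*}
	\EE\Bigl[\re^{sf-\sigma_f s^2/2}\,\re^{tg-\sigma_g t^2/2}\Bigr]=\re^{-\sigma_f s^2/2-\sigma_g t^2/2}\,\EE\bigl[\re^{sf+tg}\bigr]=\re^{st\,\EE[fg]}=\sum_{m=0}^\infty\frac{(st)^m}{m!}\bigl(\EE[fg]\bigr)^m.
\end{equation*}
On the left-hand side, interchanging the double sum with the expectation gives $\sum_{k,\ell\ge0}\frac{s^k t^\ell}{k!\,\ell!}\EE[H_k(f;\sigma_f)H_\ell(g;\sigma_g)]$. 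Matching the coefficient of $s^k t^\ell$: the right-hand side contributes only when $k=\ell$, with coefficient $(\EE[fg])^k/k!$, so $\EE[H_k(f;\sigma_f)H_\ell(g;\sigma_g)]=0$ when $k\ne\ell$ and $\EE[H_k(f;\sigma_f)H_k(g;\sigma_g)]=k!\,(\EE[fg])^k$, which is exactly \eqref{hermiteorthogonalityeq}. (The restriction $k,\ell\ge1$ in the statement is immaterial, since $H_0\equiv1$ trivialises the $k=0$ or $\ell=0$ cases.)

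The one point that requires care is the sum--expectation interchange, i.e.\ checking that $\EE\bigl[\sum_{k,\ell}\frac{|s|^k|t|^\ell}{k!\,\ell!}|H_k(f;\sigma_f)|\,|H_\ell(g;\sigma_g)|\bigr]<\infty$ for $s,t$ near the origin, which is the only (minor) obstacle I anticipate; the algebraic heart is the one-line identity above. Here I would use a Cramér-type pointwise bound $|H_k(x;\sigma)|\lesssim\sigma^{k/2}\sqrt{k!}\,\re^{x^2/(4\sigma)}$, which yields $\sum_k\frac{|s|^k}{k!}|H_k(f;\sigma_f)|\lesssim_s\re^{f^2/(4\sigma_f)}$ (using $\sum_k a^k/\sqrt{k!}<\infty$) and likewise for $g$, reducing matters to $\EE[\re^{f^2/(4\sigma_f)+g^2/(4\sigma_g)}]<\infty$. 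The latter is a routine Gaussian integral: writing $\Sigma$ for the covariance matrix of $(f,g)$, the quadratic form $\tfrac12\Sigma^{-1}-\mathrm{diag}(\tfrac1{4\sigma_f},\tfrac1{4\sigma_g})$ is positive definite precisely when $\det\Sigma=\sigma_f\sigma_g-(\EE[fg])^2>0$, i.e.\ when $(f,g)$ is nondegenerate. The borderline case $|\EE[fg]|=\sqrt{\sigma_f\sigma_g}$, in which $g$ is an almost-sure constant multiple of $f$, I would dispatch separately using the scaling relation $H_\ell(cx;c^2\sigma)=c^\ell H_\ell(x;\sigma)$ together with the single-variable orthogonality $\EE[H_k(f;\sigma_f)H_\ell(f;\sigma_f)]=\delta_{k\ell}k!\sigma_f^k$; alternatively, both sides of \eqref{hermiteorthogonalityeq} are continuous in $\EE[fg]$ for fixed $\sigma_f,\sigma_g$, so the result extends to the boundary by continuity. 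Should one wish to bypass the exponential estimates entirely, an equivalent route is induction on $k+\ell$ via the three-term recursion $H_{k+1}(x;\sigma)=xH_k(x;\sigma)-k\sigma H_{k-1}(x;\sigma)$ and the Gaussian integration-by-parts identity $\EE[f\,\Psi(f,g)]=\sigma_f\,\EE[\partial_f\Psi]+\EE[fg]\,\EE[\partial_g\Psi]$.
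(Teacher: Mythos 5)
The paper states Lemma~\ref{hermiteorthogonality} as a classical fact and supplies no proof, so there is no internal argument to compare against. Your generating-function derivation is the standard proof of this identity, and it is correct: the identity $\sum_k \tfrac{t^k}{k!}H_k(x;\sigma)=\exp(tx-\tfrac{\sigma t^2}{2})$, the computation of $\EE[\re^{sf+tg}]$ for a jointly Gaussian centred pair, and the coefficient-matching all go through, and the dominated-convergence step you flag is handled correctly by the Cram\'er-type bound $|H_k(x;\sigma)|\lesssim \sigma^{k/2}\sqrt{k!}\,\re^{x^2/(4\sigma)}$ together with the Gaussian integrability of $\re^{f^2/(4\sigma_f)+g^2/(4\sigma_g)}$; the degenerate case $|\EE[fg]|=\sqrt{\sigma_f\sigma_g}$ is correctly dispatched by continuity or by reduction to the single-variable orthogonality relation.
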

\begin{lemma}[Bou\'e-Dupuis variational formula, \cite{Ust14,BD98}] \label{b-d}
	Let $\HH_{\rm a}$ be drifts, namely, prog-\allowbreak ressively-measurable processess which are $\PP$-a.s. in $L^2([0,1];L^2(\TT^d))$. Fix $N\in\NN$. Suppose that $F:C^\infty(\TT^d)\to\RR$ is measurable and such that 
	\begin{equation*}
		\EE|F(Y_N(1))|^p+\EE|\exp(-F(Y_N(1)))|^{p'}<\infty
	\end{equation*}
for some $1<p<\infty$. Then we have the following variational representation
	\begin{equation} \label{b-deq}
		-\log \EE \,\exp(-F(Y_N(1)))=\inf_{\theta\in\HH_{\rm a}}\EE\Bigl[F\Bigl(Y_N(1)+\pi_N\int_0^1\ja{\nabla}^{-\alpha}\theta(t)\, \rd t\Bigr)+\frac{1}{2}\int_0^1\norm{\theta(t)}{L^2}^2\, \rd t\Big].
	\end{equation}
\end{lemma}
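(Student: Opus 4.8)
This is the classical Bou\'e--Dupuis variational formula \cite{BD98}, in the sharpened form of \cite{Ust14} which weakens boundedness of $F$ to the integrability hypothesis stated, applied to the Gaussian variable $Y_N(1)$; the plan is to recall the identification that places us in that framework and then establish the two matching inequalities. Expanding $\theta(t)=\sum_n\theta_n(t)\re_n$ in Fourier modes, $Y_N(1)$ is a linear functional of the finitely many Brownian motions $(B_n)_{|n|_\infty\le N}$ on $[0,1]$, and adding the drift $\pi_N\int_0^1\ja{\nabla}^{-\alpha}\theta(t)\,\rd t$ translates each such $B_n$ by the absolutely continuous path $t\mapsto\int_0^t\theta_n(s)\,\rd s$ (equivalently, the Fourier coefficient $\ja{n}^{-\alpha}B_n$ of $Y_N$ is translated by $\ja{n}^{-\alpha}\int_0^\cdot\theta_n$). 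Since replacing $\theta$ by its spatial truncation $\pi_N\theta$ leaves the argument of $F$ unchanged while not increasing $\int_0^1\norm{\theta}{L^2}^2\,\rd t$, the infimum over $\HH_{\rm a}$ equals the infimum over drifts supported on $\{|n|_\infty\le N\}$; we are thus reduced to a finite-dimensional Brownian motion carrying an adapted drift, which is exactly the setting of \cite{BD98}.

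\textbf{Upper bound.} Fix a drift $\theta\in\HH_{\rm a}$, which by the above may be taken frequency-truncated. Let $\QQ_\theta$ be the law with Girsanov density $\frac{\rd\QQ_\theta}{\rd\PP}=\exp\bigl(\int_0^1\theta(t)\,\rd B(t)-\tfrac12\int_0^1\norm{\theta(t)}{L^2}^2\,\rd t\bigr)$, the stochastic integral being taken against the underlying white noise; for the drifts of finite cost that matter, a localisation argument together with the integrability of $F$ shows this is a genuine probability. Under $\QQ_\theta$ the shifted processes $B_n-\int_0^\cdot\theta_n$ form a standard Brownian family, so the law of $Y_N(1)$ under $\QQ_\theta$ coincides with the law of $Y_N(1)+\pi_N\int_0^1\ja{\nabla}^{-\alpha}\theta(t)\,\rd t$ under $\PP$. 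The Gibbs (Donsker--Varadhan) variational inequality gives
\begin{equation*}
	-\log\EE\exp(-F(Y_N(1)))\le\EE^{\QQ_\theta}[F(Y_N(1))]+\mathrm{Ent}(\QQ_\theta\,\|\,\PP),
\end{equation*}
and since $\int_0^\cdot\theta\,\rd\tilde B$ is a centred $\QQ_\theta$-martingale (with $\tilde B$ the $\QQ_\theta$-Brownian motion), one computes $\mathrm{Ent}(\QQ_\theta\,\|\,\PP)=\tfrac12\EE^{\QQ_\theta}\int_0^1\norm{\theta(t)}{L^2}^2\,\rd t$. Transporting both terms to $\PP$ through the law identity above yields the bound against this $\theta$, hence against the infimum, which is the inequality ``$\le$'' in \eqref{b-deq}.

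\textbf{Lower bound.} Put $\cZ:=\EE\exp(-F(Y_N(1)))$, which lies in $(0,\infty)$ by hypothesis, and let $\QQ^*$ have density $\cZ^{-1}\exp(-F(Y_N(1)))\in L^{p'}(\PP)$. The strictly positive $\PP$-martingale $M_t:=\EE[\cZ^{-1}\exp(-F(Y_N(1)))\mid\mathcal F_t]$ is, by the martingale representation theorem for the Brownian filtration, a stochastic exponential $M_t=\exp\bigl(\int_0^t\theta^*(s)\,\rd B(s)-\tfrac12\int_0^t\norm{\theta^*(s)}{L^2}^2\,\rd s\bigr)$ for some progressively measurable $\theta^*$. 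Positivity of $M_1$ forces $\int_0^1\norm{\theta^*}{L^2}^2\,\rd s<\infty$ $\PP$-a.s., so $\theta^*\in\HH_{\rm a}$, and moreover $\EE^{\QQ^*}\int_0^1\norm{\theta^*}{L^2}^2\,\rd s<\infty$ since this equals $2\,\mathrm{Ent}(\QQ^*\,\|\,\PP)=2\bigl(\EE^{\QQ^*}[-F(Y_N(1))]-\log\cZ\bigr)$, finite precisely by the integrability hypotheses and Hölder in $p,p'$. Re-running the upper-bound computation with $\theta=\theta^*$ is now an equality throughout---the equality case of the Gibbs variational principle applies because $\frac{\rd\QQ^*}{\rd\PP}\propto\exp(-F(Y_N(1)))$---so the infimum in \eqref{b-deq} is attained and equals $-\log\cZ$.

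\textbf{Main obstacle.} The delicate part is the lower bound: one must verify that the drift $\theta^*$ furnished by martingale representation is an admissible element of $\HH_{\rm a}$ with finite $\QQ^*$-expected energy, that the exponential martingale $M$ is uniformly integrable so that Girsanov genuinely reverses the shift, and that the passage between the ``strong'' drift formulation and the ``weak'' formulation underlying Girsanov's theorem is legitimate. Each of these uses the quantitative control furnished by $\EE|F(Y_N(1))|^p+\EE|\exp(-F(Y_N(1)))|^{p'}<\infty$ (de la Vall\'ee-Poussin and Novikov-type arguments) rather than mere integrability; the upper bound and the entropy identities are comparatively routine. For the applications in this paper it would in fact suffice to check the hypotheses of the versions recorded in \cite{Ust14,BD98} and invoke them directly.
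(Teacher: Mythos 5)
The paper itself does not prove this lemma: it is cited to \cite{Ust14,BD98} and used as a black box. So there is no in-paper argument to compare against; what follows is a review of your reconstruction.

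Your strategy---Girsanov plus the Donsker--Varadhan/Gibbs inequality for the upper bound, martingale representation for the lower bound---is the right circle of ideas, but the transport step in your upper bound is false. You claim that because $B_n-\int_0^\cdot\theta_n$ is Brownian under $\QQ_\theta$, ``the law of $Y_N(1)$ under $\QQ_\theta$ coincides with the law of $Y_N(1)+\pi_N\int_0^1\ja{\nabla}^{-\alpha}\theta(t)\,\rd t$ under $\PP$.'' This fails for non-deterministic adapted drifts. Girsanov gives $\mathrm{Law}_{\QQ_\theta}(B-\int_0^\cdot\theta)=\mathrm{Law}_\PP(B)$, not $\mathrm{Law}_{\QQ_\theta}(B)=\mathrm{Law}_\PP(B+\int_0^\cdot\theta)$: under $\QQ_\theta$ the drift satisfies $\theta=\Phi(B)=\Phi(\tilde B+\int_0^\cdot\theta)$, which is a \emph{different} progressive functional of the new Brownian motion $\tilde B$ than the one defining $\theta$ under $\PP$. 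A one-dimensional check: take a single scalar mode with $\theta_t=B_t$. Under $\PP$, $B_1+\int_0^1 B_s\,\rd s$ has variance $\int_0^1(2-s)^2\,\rd s=\tfrac73$; under $\QQ_\theta$, $B$ solves $\rd B_t=B_t\,\rd t+\rd\tilde B_t$ and $B_1$ has variance $\int_0^1 e^{2(1-s)}\,\rd s=(e^2-1)/2\approx 3.19\neq\tfrac73$. Since your lower bound ``re-runs the upper-bound computation'' with $\theta=\theta^*$, the same false identity contaminates that step as well.

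The standard fix avoids transporting laws. With $\rho_\theta:=\exp\bigl(-\int_0^1\theta\,\rd B-\tfrac12\int_0^1\norm{\theta}{L^2}^2\,\rd t\bigr)$, Girsanov makes $B+\int_0^\cdot\theta$ Brownian under $\rho_\theta\,\rd\PP$, so for any bounded path functional $G$ one has $\EE^\PP[\rho_\theta\,G(B+\int_0^\cdot\theta)]=\EE^\PP[G(B)]$. Taking $G(w)=\exp(-F(Y_N(w_1)))$ yields the exact identity
\begin{equation*}
\EE^\PP\bigl[\re^{-F(Y_N)}\bigr]=\EE^\PP\Bigl[\exp\Bigl(-\int_0^1\theta\,\rd B-\tfrac12\int_0^1\norm{\theta}{L^2}^2\,\rd t-F\bigl(Y_N+\pi_N\int_0^1\ja{\nabla}^{-\alpha}\theta\,\rd t\bigr)\Bigr)\Bigr],
\end{equation*}
and Jensen together with $\EE^\PP\int_0^1\theta\,\rd B=0$ gives the upper bound for each $\theta\in\HH_{\rm a}$ with no strong/weak subtlety. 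The lower bound via martingale representation and the entropy identity is then as you sketch; the technical content you correctly flag (finite $\QQ^*$-expected energy from finite relative entropy, uniform integrability of the Doléans exponential, and the unboundedness of $F$ handled by \cite{Ust14}) is where the real work lies. Either replace the transport-of-laws paragraph by the identity above, or simply cite \cite{Ust14,BD98} as the paper does.
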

\noindent Lemma~\ref{b-d} will simplify many of our calculations to come, and allow us to identify a need for a second renormalisation in the regime $d = 3\alpha$.

\subsection{Regularity estimates.}
Below is a lemma on pathwise regularity estimates for wick powers $\wick{Y_N^k(t)}$.
\begin{lemma}[Pathwise regularity of stochastic terms] \label{pathwreg} One has the following estimates.
	\begin{enumerate}[label=(\roman*)]
		\item Let $k=1,2,3$, $q\ge 2$, and $\eps>0$. Write $s=k(\alpha-\frac d2)$. Then $\wick{Y_N^k(t)}$ converges to $\wick{Y^k(t)}$ in $L^q(\Omega;\cC^{s-\eps})$ and almost-surely in $\cC^{s-\eps}$. Moreover
		\begin{equation}
			\EE\norm{\,\wick{Y_N^k(t)}\,}{\cC^{s-\eps}}^q \lesssim q^{\frac{k}{2}}<\infty
		\end{equation}
		uniformly in $N\in\NN$ and $t\in[0,1]$.
		\item Assume $d<3\alpha$. Then
		\begin{equation}
			\EE\norm{\,\wick{Y_N^2(t)}\,}{H^{-\alpha}}^2 \lesssim t^2
		\end{equation}
		uniformly in $N\in\NN$. On the other hand, assume $d = 3\alpha$. Then
		\begin{equation}
			\EE\norm{\,\wick{Y_N^2(t)}\,}{H^{-\alpha}}^2 \gtrsim t^2\log N
		\end{equation}
		for any $t\in[0,1]$.
		\item We have
		\begin{equation}
			\EE \Bigl[\int_{\TT^d} \wick{Y_N^p(1)}\, \rd x\Big]=0.
		\end{equation}
	\end{enumerate}
\end{lemma}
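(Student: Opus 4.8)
\emph{Plan of proof.} The plan is to reduce every assertion about the Wick powers $\wick{Y_N^k(t)}$ to explicit lattice sums, the two workhorses being the Hermite orthogonality relation (Lemma~\ref{hermiteorthogonality}) and Gaussian hypercontractivity (Lemma~\ref{hypercontractivity}). Writing
\begin{equation*}
	Q_N(z)=\sum_{|m|_\infty\le N}\ja{m}^{-2\alpha}\re^{2\pi\ri m\cdot z},
\end{equation*}
one has $\EE[Y_N(x,t)Y_M(y,t)]=t\,Q_{N\wedge M}(x-y)$ and $\sigma_N(t)=\EE[Y_N(x,t)^2]=t\,Q_N(0)$. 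Since $\wick{Y_N^k(t)}=H_k(Y_N(\cdot,t);\sigma_N(t))$ lies in the $k$-th Wiener chaos, Lemma~\ref{hermiteorthogonality} gives
\begin{equation} \label{wickcovlem}
	\EE\bigl[\wick{Y_N^k(t)}(x)\,\wick{Y_M^k(t)}(y)\bigr]=k!\,t^k\,Q_{N\wedge M}(x-y)^k,
\end{equation}
so that $\EE|\widehat{\wick{Y_N^k(t)}}(n)|^2=k!\,t^k\,\widehat{Q_N^k}(n)$, where $\widehat{Q_N^k}(n)=\sum\prod_{i=1}^k\ja{m_i}^{-2\alpha}$ with the sum running over $m_1+\dots+m_k=n$, $|m_i|_\infty\le N$. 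Iterating Lemma~\ref{discrconv} $k-1$ times yields, for every $\eps'>0$ and uniformly in $N$,
\begin{equation} \label{QNklem}
	\widehat{Q_N^k}(n)\lesssim\ja{n}^{(k-1)d-2k\alpha+\eps'}\qquad(2\alpha\le d),
\end{equation}
the $\eps'$ slack absorbing the endpoint cases of Lemma~\ref{discrconv}; when $d<2\alpha$ the kernel $Q_N$ is bounded in $L^\infty$ uniformly in $N$, so $\wick{Y_N^k}$ is function-valued and the estimates below simplify.

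For part (i) I would run the standard construction of Wick powers. A Besov embedding $B^{s-\eps/2}_{q,q}\hookrightarrow\cC^{s-\eps}$ (valid once $d/q\le\eps/2$, by Lemma~\ref{besovest}; the small-$q$ case follows by Jensen) together with stationarity reduces $\EE\norm{\wick{Y_N^k(t)}}{\cC^{s-\eps}}^q$ to a bound on $\sum_{j\ge0}2^{(s-\eps/2)jq}\,\EE|\phi_j(\nabla)\wick{Y_N^k(t)}(0)|^q$; by Lemma~\ref{hypercontractivity} this is $\lesssim\sum_j2^{(s-\eps/2)jq}(q^k\,\EE|\phi_j(\nabla)\wick{Y_N^k(t)}(0)|^2)^{q/2}$, and by \eqref{QNklem} together with $t\le1$,
\begin{equation*}
	\EE|\phi_j(\nabla)\wick{Y_N^k(t)}(0)|^2\lesssim\sum_{|n|\sim2^j}\widehat{Q_N^k}(n)\lesssim 2^{(-2s+\eps')j}
\end{equation*}
(recall $s=k(\alpha-\tfrac d2)$, so $d+(k-1)d-2k\alpha=-2s$). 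Taking $\eps'<\eps$ turns the $j$-sum into a convergent geometric series, which gives the asserted $q$-dependence uniformly in $N$ and $t\in[0,1]$. For convergence I would apply the same scheme to $\wick{Y_N^k(t)}-\wick{Y_M^k(t)}$: by \eqref{wickcovlem} the relevant second moments are $k!\,t^k(\widehat{Q_N^k}(n)-\widehat{Q_M^k}(n))$, and since that difference only retains tuples with some $|m_i|_\infty\in(M,N]$, it is $\lesssim M^{-\delta}\ja{n}^{(k-1)d-2k\alpha+\eps'+\delta}$ for small $\delta>0$. Hence $(\wick{Y_N^k(t)})_N$ is Cauchy in $L^q(\Omega;\cC^{s-\eps})$, so it converges there; almost-sure convergence then follows along a subsequence by Borel--Cantelli, and for the whole sequence since the limits must agree.

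Part (ii) is the crux and the point where the threshold $d=3\alpha$ surfaces. By Plancherel, \eqref{wickcovlem} with $k=2$ and $N=M$, and the Fourier symbol $\ja{n}^{-2\alpha}$ of $(1-\Delta)^{-\alpha}$,
\begin{equation*}
	\EE\norm{\wick{Y_N^2(t)}}{H^{-\alpha}}^2=2t^2\sum_{n\in\ZZ^d}\frac{\widehat{Q_N^2}(n)}{\ja{n}^{2\alpha}}=2t^2\sum_{|m_1|_\infty,|m_2|_\infty\le N}\frac{1}{\ja{m_1}^{2\alpha}\ja{m_2}^{2\alpha}\ja{m_1+m_2}^{2\alpha}}.
\end{equation*}
When $d<3\alpha$: summing over $m_2$ and then over $m_1$ via Lemma~\ref{discrconv} (with an elementary variant when $d\le2\alpha$) produces a bound uniform in $N$ exactly when $6\alpha-d>d$; concretely $\sum_{m_2}\ja{m_2}^{-2\alpha}\ja{m_1+m_2}^{-2\alpha}\lesssim\ja{m_1}^{-(4\alpha-d)}$ (up to a logarithm at $d=2\alpha$), and then $\sum_{m_1}\ja{m_1}^{-2\alpha-(4\alpha-d)}=\sum_{m_1}\ja{m_1}^{-(6\alpha-d)}<\infty$, so $\EE\norm{\wick{Y_N^2(t)}}{H^{-\alpha}}^2\lesssim t^2$. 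When $d=3\alpha$: I would bound the triple sum from below by restricting to $1\le|m_1|_\infty\le N/2$, $|m_2|_\infty\le|m_1|_\infty$, where $\ja{m_1+m_2}^{-2\alpha}\gtrsim\ja{m_1}^{-2\alpha}$ and $\sum_{|m_2|_\infty\le|m_1|_\infty}\ja{m_2}^{-2\alpha}\gtrsim\ja{m_1}^{d-2\alpha}$ (using $2\alpha<d$); the sum is then $\gtrsim\sum_{1\le|m_1|_\infty\le N/2}\ja{m_1}^{d-6\alpha}=\sum_{1\le|m_1|_\infty\le N/2}\ja{m_1}^{-d}\gtrsim\log N$, since $6\alpha-d=d$ at criticality. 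Hence $\EE\norm{\wick{Y_N^2(t)}}{H^{-\alpha}}^2\gtrsim t^2\log N$. I expect this lower bound to be the main obstacle: the upper bound and all of part (i) are routine Gaussian-chaos estimates, whereas the lower bound requires pinning down the exact diagonal region of momentum space responsible for the logarithmic divergence and checking it is not cancelled — this is precisely the estimate that later forces the second renormalisation in the critical regime $d=3\alpha$.

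Finally, part (iii) is immediate: for $p\ge1$ each $\wick{Y_N^p(1)}(x)=H_p(Y_N(x,1);\sigma_N(1))$ has zero mean (Hermite polynomials are orthogonal to constants under the Gaussian of the matching variance), and since part (i) gives $\EE\int_{\TT^d}|\wick{Y_N^p(1)}|\,\rd x<\infty$, Fubini yields $\EE\int_{\TT^d}\wick{Y_N^p(1)}\,\rd x=\int_{\TT^d}\EE[\wick{Y_N^p(1)}(x)]\,\rd x=0$.
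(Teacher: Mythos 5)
The paper does not actually prove this lemma; it simply defers to the references \cite{GKO18}, \cite{GKO24}, and \cite{OOT24}, so your proof is filling in a genuine gap. The strategy — writing $\EE[\wick{Y_N^k(t)}(x)\wick{Y_M^k(t)}(y)] = k!\,t^k Q_{N\wedge M}(x-y)^k$ via Hermite orthogonality, bounding $\widehat{Q_N^k}$ by iterated discrete convolutions, reducing the Besov norm by stationarity and hypercontractivity, and using Cauchy-in-$L^q$ to get a limit — is exactly the standard chaos-estimate route and is what those references do. Parts (ii) and (iii) and the $k=1,2$ cases of (i) are correct, and your lower bound at $d=3\alpha$ in (ii) correctly isolates the diagonal momentum region responsible for the $\log N$ and checks it is not cancelled. (You should reconcile your intermediate bound $\EE\lVert\cdot\rVert^q\lesssim q^{kq/2}$ with the paper's stated $q^{k/2}$; the latter is evidently meant as $(\EE\lVert\cdot\rVert^q)^{1/q}\lesssim q^{k/2}$, which your derivation does give, but you should say so rather than silently asserting "the asserted $q$-dependence".)

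There is, however, a real gap in your estimate \eqref{QNklem} when $k=3$ and $d=3\alpha$: you claim $\widehat{Q_N^k}(n)\lesssim\ja{n}^{(k-1)d-2k\alpha+\eps'}$ uniformly in $N$ for all $2\alpha\le d$, with "$\eps'$ slack absorbing the endpoint cases of Lemma~\ref{discrconv}", but the failure at $k=3$, $d=3\alpha$ is not the $b=d$ endpoint of that lemma. The second iteration requires $a+b>d$ with $a=4\alpha-d$, $b=2\alpha$, i.e. $6\alpha>2d$, which is false at $d=3\alpha$: there one has $a+b=d$ exactly, and no amount of $\eps'$ slack repairs a sum that is logarithmically divergent over $|m|\le N$. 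Concretely,
\begin{equation*}
\widehat{Q_N^3}(0)=\sum_{\substack{m_1+m_2+m_3=0\\ |m_i|_\infty\le N}}\frac{1}{\ja{m_1}^{2\alpha}\ja{m_2}^{2\alpha}\ja{m_3}^{2\alpha}}\sim\log N
\end{equation*}
at $d=3\alpha$, which is precisely the computation the paper itself carries out (for $\lVert\int\wick{u_N^3}\rVert_{L^2(\mu)}^2$) in the singularity proof, and it feeds into the Hölder norm through the zero Fourier mode: $\lVert\wick{Y_N^3}\rVert_{\cC^{s-\eps}}\ge|\widehat{\wick{Y_N^3}}(0)|$. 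So your claimed $N$-uniform bound on $\EE\lVert\wick{Y_N^3(t)}\rVert_{\cC^{s-\eps}}^q$ fails at $d=3\alpha$, and so does the convergence claim for $k=3$ there. This mirrors a latent imprecision in the lemma as stated: the paper only uses $k=1,2$ (plus the mean-zero identity for $k=3$, which is part (iii), not part (i)) at the critical dimension, so the argument downstream is unaffected, but your proof should restrict the $k=3$ clause of (i) to $d<3\alpha$ rather than asserting uniformity across $2\alpha\le d\le 3\alpha$.
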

See \cite{GKO18} and \cite{GKO24} for proofs similar to (i). One can prove (ii) as in \cite{OOT24}. Finally, (iii) is a consequence of Hermite orthogonality. \qed

\section{(Non-)construction of $\Phi^3_d$-measure} \label{phi3d}
In this section, we focus on the (non-)construction of $\Phi^3_d$-measure in what will be based on that carried out in \cite{OOT24}. \vspace{\baselineskip}

\subsection{A change-of-variable.} \label{c-o-v}
We first discuss a change-of-variable to be used in the Bou\'e-Dupuis variational formula arising from a need for a second renormalisation in the case $d=3\alpha$. Suppose that $\beta_N=0$ for all $N$ in \eqref{masstamingpotential}. Then by Lemma~\ref{b-d}, we have
\begin{equation} \label{b-dVN} \begin{aligned}
	-&\log\int_{\cD'}\exp(-V_N(u))\, \mu(\rd u) \\
	&=\inf_{\theta\in\HH_{\rm a}}\EE\Bigl[-\frac{\sigma}{3}\int_{\TT^d}\wick{(Y_N+\Theta_N)^3}\, \rd x+A\Big|\int_{\TT^d}\wick{(Y_N+\Theta_N)^2}\, \rd x\Big|^\gamma+\frac12\int_0^1\norm{\theta(t)}{L^2}^2\, \rd t\Big],
\end{aligned} \end{equation}
where $Y_N=Y_N(1)$ and $\Theta_N=\pi_N\Theta=\pi_N\int_0^1\ja{\nabla}^{-\alpha}\theta(t)\,\rd t$. Using the binomial formula for cubic Wick powers, we investigate cross-terms $\int_{\TT^d}\wick{Y_N^j}\Theta_N^{3-j}\, \rd x$. Where it turns out for $j=0,1$ we have control (see Lemma~\ref{b-dcrossterms} below), and recalling that $\int_{\TT^d}\wick{Y_N^3}\, \rd x$ is zero under expectation, we discuss $j=2$. Using It\^o's product formula,
\begin{equation*} \begin{aligned}
	\EE&\Bigl[\int_{\TT^d}\wick{Y_N^2}\Theta_N\, \rd x\Big] \\
	&=\EE\Bigl[\int_0^1\int_{\TT^d}\wick{Y_N^2(t)}\dot\Theta_N(t)\, \rd x\, \rd t\Big]+\EE\Bigl[\int_{\TT^d}\int_0^1 \Theta_N(t)\,\rd(\,\wick{Y_N^2}\,)t\, \rd x\Big]+\EE[\,\wick{Y_N^2}\,,\Theta_N]_1 \\
	&=\EE\Bigl[\int_0^1\int_{\TT^d}\wick{Y_N^2(t)}\dot\Theta_N(t)\, \rd x\, \rd t\Big],
\end{aligned} \end{equation*}
where $\dot\Theta_N(t)=\pi_N\ja{\nabla}^{-\alpha}\theta(t)$, and $[\cdot,\cdot]$ is the bracket process. The last equality follows from the fact that $\wick{Y_N^2}$ is a martingale and $\Theta_N$ is a finite variation process. Define $\fZ^N$ by its derivative via $\fZ^N(0)=0$ and
\begin{equation}
	\dot\fZ^N(t)=\ja{\nabla}^{-2\alpha}\wick{Y_N^2(t)}
\end{equation}
and set $\fZ_N=\pi_N\fZ^N$. Then put
\begin{equation} \label{Upsdef}
	\dot\Ups^N(t)=\dot\Theta(t)-\sigma\dot\fZ_N(t)
\end{equation}
and set $\Ups_N=\pi_N\Ups^N$. One can then verify (essentially by completing the square), that
\begin{equation} \begin{aligned}
	\EE\Bigl[-\sigma\int_{\TT^d}&\wick{Y_N^2}\Theta_N\, \rd x+\frac12\int_0^1\norm{\theta(t)}{L^2}^2\, \rd t\Big] \\
	&= \EE\Bigl[\frac12\int_0^1\norm{\dot\Ups^N(t)}{H^\alpha}\, \rd t-\frac{\sigma^2}{2}\int_0^1\norm{\dot\fZ_N(t)}{H^\alpha}^2\, \rd t\Big].
\end{aligned} \end{equation}
As can be seen in Lemma~\ref{pathwreg}, the constant $\frac{\sigma^2}{2}\EE[\int_0^1\norm{\dot\fZ_N(t)}{H^\alpha}^2\, \rd t]$ appearing above is bounded uniformly in $N$ when $d<3\alpha$ and divergent when $d = 3\alpha$. In the latter regime, we perform a further renormalisation by setting $\beta_N$ equal to this diverging constant. Following the definitions above, we replace the minimisation over $\theta\in\HH_\mathrm{a}$ in \eqref{b-dVN} to minimisation over $\dot\Ups^N\in\HH^\alpha_\mathrm{a}=\ja{\nabla}^{-\alpha}\HH_\mathrm{a}$ as
\begin{equation} \begin{aligned}
	-&\log\int_{\cD'}\exp(-V_N(u))\, \mu(\rd u) \\
	&=\inf_{\dot\Ups^n\in\HH_{\rm a}^\alpha}\EE\Bigl[-\frac{\sigma}{3}\int_{\TT^d}Y_N\Theta_N^2\, \rd x-\frac\sigma3\int_{\TT^d}\Theta_N^3\, \rd x+A\Big|\int_{\TT^d}\wick{(Y_N+\Theta_N)^2}\, \rd x\Big|^\gamma \\
	&\hskip220pt+\frac12\int_0^1\norm{\dot\Ups^n(t)}{H^\alpha}^2\, \rd t\Big].
\end{aligned} \end{equation}
\subsection{The main result and proof strategy.}
Before stating our main result, we define a taming functional to be used in the strongly nonlinear regime. Let
\begin{equation} \label{anormdefn}
	\norm u\cA=\sup_{0<t\le 1} t^{\alpha-\frac d6-\eps}\norm{p_t\ast u}{L^3},
\end{equation}
(where $\eps$ will be assumed sufficiently small to close arguments), i.e. $\cA=B_{3,\infty}^{-2\alpha+\frac d3+2\eps}$. Let $s=\alpha-\frac d6-\eps$; the choice of this exponent will become clear following the proof of Proposition~\ref{uniexpint} (v). From the embedding $\cC^{\alpha-\frac d2-\eps} \hookrightarrow \cA$, it holds that $\cA$ contains the support of $\mu$. In what follows,
\begin{equation} \label{tamedpotential}
W_{N,\delta}(u)=\delta\norm{u_N}{\cA}^q+V_N(u), \qquad \vartheta_{N,\delta}(\rd u)=\cZ_{N,\delta}^{-1}\exp(-W_{N,\delta}(u))\, \mu(\rd u),
\end{equation}
where the $\cZ_{N,\delta}$ are normalisation constants and $q$ is an exponent to be chosen later. Our main result is the following.
\begin{thm}[Gibbs measure (non-)construction] \label{main}
	Assume $d\le3\alpha$. If $d<3\alpha$, set $\beta_N=0$ in the definition of $V_N$; otherwise, (i.e. $d=3\alpha$) set
	\begin{equation*}
		\beta_N=\frac{\sigma^2}2\EE\Bigl[\int_0^1\norm{\dot\fZ_N(t)}{H^\alpha}^2\, \rd t\Big].
	\end{equation*}
	Note $\beta_N\to\infty$ in this case. There exist nonlinearity thresholds $0<\sigma_0\le\sigma_1$ for which the following is true.
	\begin{enumerate}[label=(\roman*)]
		\item (Very regular regime) When $d<2\alpha$, there is a choice of $\gamma$ in $(1,2)$ such that, for any $\sigma,A$, we have the uniform exponential integrability
		\begin{equation}
			\sup_{N\in\NN}\cZ_N = \sup_{N\in\NN}\,\norm{\re^{-V_N}}{L^1(\mu)}<\infty 
		\end{equation}
		and $(\varrho_N)$ converges in total variation to the desired Gibbs measure
		\begin{equation} \label{Gibbs very regular}
			\varrho(\rd u) = \cZ^{-1}\exp\Bigl(\frac\sigma3\int_{\TT^d} u^3\, \rd x-A\Big|\int_{\TT^d}u^2\, \rd x\Big|^\gamma\Bigr)\, \mu(\rd u)
		\end{equation}
		with a finite partition function $\cZ<\infty$; here $\varrho\ll\mu$;
		\item (Regular regime) When $2\alpha\le d<3\alpha$, for any $\sigma,A$, taking $\gamma=2+\eps$ when $d=2\alpha$ and $\gamma=\frac{d}{d-2\alpha}$ when $d>2\alpha$, we have the uniform exponential integrability
		\begin{equation} \label{uniformexpintegrability}
			\sup_{N\in\NN}\,\cZ_N = \sup_{N\in\NN}\,\norm{\re^{-V_N}}{L^1(\mu)}<\infty 
		\end{equation}
		and $(\varrho_N)$ converges in total variation to the desired \textit{Wick-ordered} Gibbs measure
		\begin{equation} \label{Gibbs regular}
			\varrho(\rd u) = \cZ^{-1}\exp\Bigl(\frac\sigma3\int_{\TT^d}\wick{u^3}\, \rd x-A\Big|\int_{\TT^d}\wick{u^2}\, \rd x\Big|^\gamma\Bigr)\, \mu(\rd u)
		\end{equation}
		with a finite partition function $\cZ<\infty$; here $\varrho\ll\mu$;
		\item (Critical regime, weak nonlinearity) When $d=3\alpha$, for $0<|\sigma|<\sigma_0$, $A=A(\sigma)$ sufficiently large, and $\gamma=\frac{d}{d-2\alpha}$, we have \eqref{uniformexpintegrability} as above, and a unique weak limit $\varrho$ of $(\varrho_N)_{N\in\NN}$ (realised on $\cC^{\alpha-\frac d2-\eps}$) formally given by
		\begin{equation} \label{Gibbs critical weak nonlinear}
			\varrho(\rd u) = \cZ^{-1}\exp\Bigl(\frac\sigma3\int_{\TT^d}\wick{u^3}\, \rd x-A\Big|\int_{\TT^d}\wick{u^2}\, \rd x\Big|^3-\infty\Bigr)\, \mu(\rd u); 
		\end{equation}
moreover the limiting measure $\varrho$ is singular with respect to $\mu$;
		\item for $|\sigma|>\sigma_1$, the Gibbs measure is not normalisable in the following sense: there exist $s>0$, $q\ge 1$ such that, writing $\cA=B_{3,\infty}^{-2s}$ as in \eqref{anormdefn} and $W_{N,\delta}$ as in \eqref{tamedpotential} for any $A$ and $\gamma\ge\frac{d}{d-2\alpha}$, the measures $(\vartheta_{N,\delta})_{N\in\NN}$, $\delta>0$, given by
		\begin{equation} \label{tamedmeasure} \begin{aligned}
			\vartheta_{N,\delta}(\rd u) &= \cZ_{N,\delta}^{-1}\exp(-W_{N,\delta}(u_N))\, \mu(\rd u) \\
			&= \cZ_{N,\delta}^{-1}\exp(-\delta \norm{u_N}{\cA}^q-V_N(u))\, \mu(\rd u), 
		\end{aligned} \end{equation}
		converge weakly to a limit $\vartheta_\delta$ and
		\begin{equation} \label{sigmafinitephi33}
			\varrho^\delta(\rd u) := \exp(\delta F(u))\, \vartheta_\delta(\rd u) 
		\end{equation}
		defines a measure on $\cC^{\alpha-\frac d 2-\eps}$ with $\varrho^\delta(\cD')=\infty$; under the same assumptions, the sequence $(\varrho_N)_{N\in\NN}$ has no weak limit, even up to a subsequence, as measures on $\cA\supseteq{\rm supp}\,\mu$.
	\end{enumerate}
\end{thm}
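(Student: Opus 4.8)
\emph{Reduction via Bou\'e--Dupuis.} The backbone of all four parts is Lemma~\ref{b-d} applied to $F=V_N$, combined with the change of variables $\Theta_N=\Ups_N+\sigma\fZ_N$ of Subsection~\ref{c-o-v}: this rewrites $-\log\cZ_N$ as the infimum over drifts $\dot\Ups^N\in\HH^\alpha_{\rm a}$ of the functional displayed at the end of that subsection, with cost $\frac12\int_0^1\norm{\dot\Ups^N(t)}{H^\alpha}^2\,\rd t$ and with $\beta_N$ exactly cancelled by the completed square $-\tfrac{\sigma^2}2\int_0^1\norm{\dot\fZ_N(t)}{H^\alpha}^2\,\rd t$. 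Parts (i)--(iii) then amount to a \emph{uniform-in-$N$ lower bound} on this functional (equivalently $\sup_N\cZ_N<\infty$), the matching lower bound $\cZ_N\gtrsim1$ coming from the admissible trial $\Ups_N\equiv0$ (for which every term is $O_\sigma(1)$); part (iv) amounts to exhibiting, for each $N$, a trial drift along which the functional tends to $-\infty$ (equivalently $\cZ_N\to\infty$).

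\emph{The lower bound and its consequences for (i)--(iii).} Expanding $\Theta_N^3=(\Ups_N+\sigma\fZ_N)^3$ and $\wick{(Y_N+\Theta_N)^2}$ by the binomial formula, the purely stochastic terms and all cross-terms carrying a factor $Y_N$ or $\fZ_N$ are controlled by Besov duality (Lemma~\ref{besovest}(c),(e)) together with the pathwise bounds of Lemma~\ref{pathwreg}(i) and hypercontractivity (Lemma~\ref{hypercontractivity}); the genuinely delicate contribution is $\sigma^2\!\int_{\TT^d}Y_N\fZ_N^2\,\rd x$, whose second moment at $d=3\alpha$ one estimates by Lemma~\ref{discrconv}. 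The principal term $\int_{\TT^d}\Ups_N^3\,\rd x$ is bounded by $\norm{\Ups_N}{L^3}^3$ and, via interpolation and Sobolev embedding (Lemma~\ref{besovest}(a),(d)), $\norm{\Ups_N}{L^3}\lesssim\norm{\Ups_N}{L^2}^{1-\frac d{6\alpha}}\norm{\Ups_N}{H^\alpha}^{\frac d{6\alpha}}$, followed by Young's inequality; the $\norm{\Ups_N}{H^\alpha}$-part is absorbed into $\eps\int_0^1\norm{\dot\Ups^N(t)}{H^\alpha}^2\,\rd t$ (Cauchy--Schwarz in $t$) and the $\norm{\Ups_N}{L^2}$-part into the mass-taming $A\big|\int\wick{u_N^2}\big|^\gamma$, which is legitimate precisely for the $\gamma$ named in the statement. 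This closes for all $\sigma,A$ when $d<3\alpha$; at $d=3\alpha$ the exponent is exactly $\tfrac12$, the Young step is scale-critical, and one needs $|\sigma|\le\sigma_0$ (to dominate the leftover $\sigma$-constants, chiefly from $\sigma^2\!\int Y_N\fZ_N^2$) and $A$ large, with $\gamma=\tfrac d{d-2\alpha}$. Granting the bound: for (i)--(ii), $\re^{-V_N}\to\re^{-V}$ $\mu$-a.e.\ by Lemma~\ref{pathwreg}(i), the same estimate with $\sigma$ replaced by $p\sigma$ gives $\sup_N\norm{\re^{-V_N}}{L^p(\mu)}<\infty$ for some $p>1$, and Vitali's theorem upgrades this to $L^1(\mu)$-convergence, hence total-variation convergence of $\varrho_N$ to $\varrho\ll\mu$ with $0<\cZ<\infty$. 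For (iii) one needs in addition: tightness of $(\varrho_N)$ on $\cC^{\alpha-\frac d2-\eps}$, from a Bou\'e--Dupuis bound on $\EE_{\varrho_N}\norm{u}{\cC^{\alpha-\frac d2-\eps'}}$ with $\eps'<\eps$ and the compact embedding in Lemma~\ref{besovest}(b); uniqueness of subsequential limits, from convergence of $-\log\int\re^{-G}\,\rd\varrho_N$ for bounded continuous $G$ (apply the variational representation to $V_N+G$ and reuse the stability of the above estimates); and singularity $\varrho\perp\mu$ following the \cite{OOT24} template — the near-minimising drift is forced to be $\approx\sigma\fZ$, and since $\int_0^1\norm{\dot\fZ_N(t)}{H^\alpha}^2\,\rd t\gtrsim\log N\to\infty$ by Lemma~\ref{pathwreg}(ii), $\fZ$ lies outside the Cameron--Martin space $H^\alpha$; rigorously, one exhibits a tail event built from $\int_{\TT^d}u_N\,\rd\fZ_N$, suitably renormalised by $\beta_N$, of full $\varrho$- and null $\mu$-measure.

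\emph{Non-normalisability (iv).} As trial I would take $\Ups_N=\lambda\,v_M$ with $v_M(x)=M^{d/2}\varphi(Mx)$ for a fixed nonnegative $\varphi\in C^\infty(\TT^d)$, together with the correction $\sigma\fZ_N$, so $\Theta_N=\lambda v_M+\sigma\fZ_N$, where $1\le M\le N$ and $\lambda=\lambda_N\to\infty$. The functional splits into the gain $-\tfrac\sigma3\int_{\TT^d}\Theta_N^3\,\rd x\sim-\tfrac\sigma3\big(\int\varphi^3\big)\lambda^3 M^{d/2}$ (choose $\sgn\sigma$ so this is negative), the kinetic cost $\tfrac12\int_0^1\norm{\dot\Ups^N(t)}{H^\alpha}^2\,\rd t\sim\norm\varphi{\dot H^\alpha}^2\,\lambda^2 M^{2\alpha}$, the mass-taming $A\big|\int\wick{u_N^2}\big|^\gamma\sim A\norm\varphi{L^2}^{2\gamma}\lambda^{2\gamma}$, and cross-terms that are of strictly lower order (estimated as in (iii)). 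Since $d/2<2\alpha$, minimising gain $+$ kinetic over $M$ gives the balanced scale $M\sim(\sigma\lambda)^{2/\alpha}$ and, at $d=3\alpha$, the value $\sim-c(\varphi)\,\sigma^4\lambda^6$ with $c(\varphi)>0$ explicit; as $\gamma\ge\tfrac d{d-2\alpha}=3$ forces $\lambda^{2\gamma}\ge\lambda^6$, the total behaves like $\lambda^6\big(-c(\varphi)\sigma^4+c'(\varphi)A\big)$, which is negative once $|\sigma|>\sigma_1$ (with $\sigma_1$ fixed by $A,\gamma$ and the optimal profile). Taking $M=N$ forces $\lambda_N\sim N^{\alpha/2}$, so $-\log\cZ_N\lesssim-N^{3\alpha}$, i.e.\ $\cZ_N\to\infty$ and $\varrho_N$ cannot converge to a probability measure. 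With the auxiliary taming $\delta\norm{u_N}\cA^q$ (the exponents $s,q$ being those fixed after Proposition~\ref{uniexpint}(v)), the $\cA$-norm dominates the bump, so the tamed functional \emph{is} bounded below uniformly: $\sup_N\cZ_{N,\delta}<\infty$, $(\vartheta_{N,\delta})$ is tight, hence weakly convergent (along subsequences, then by the uniqueness argument of (iii) along the whole sequence) to $\vartheta_\delta$. Since $\int\re^{\delta F_N}\,\rd\vartheta_{N,\delta}=\cZ_N/\cZ_{N,\delta}\to\infty$ while $\cZ_{N,\delta}$ stays bounded, the same vague-convergence reasoning as in \cite{OOT24} gives $\varrho^\delta(\cD')=\infty$ for $\varrho^\delta:=\re^{\delta F}\,\vartheta_\delta$; finally $\varrho_N(\norm{u_N}\cA\le R)\le(\cZ_{N,\delta}/\cZ_N)\,\re^{\delta R^q}\to0$ for every $R$ shows $\varrho_N$ loses all mass on bounded (hence relatively compact) subsets of $\cA$, so $(\varrho_N)$ has no weak limit, even along subsequences.

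\emph{Main obstacle.} The crux is the critical dimension $d=3\alpha$: the Young step in the lower bound is scale-critical, so all implicit constants must be tracked, and the new term $\sigma^2\!\int Y_N\fZ_N^2$ together with the interplay between $\int_0^1\norm{\dot\fZ_N(t)}{H^\alpha}^2\,\rd t\sim\log N$ and its counterterm $\beta_N$ is exactly what pins down $\sigma_0$; on the other side, securing the sign of the leading $\lambda^{2\gamma}$-coefficient in (iv) — a quantitative focusing computation, with an optimisation over the profile $\varphi$ — is what pins down $\sigma_1$, and the singularity proof of (iii) is the remaining delicate point. Throughout, the case $d=3$, $\alpha=1$ of \cite{OOT24} is the template.
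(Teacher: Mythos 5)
Your high-level strategy matches the paper's: Bou\'e--Dupuis with the $\Theta_N=\Ups_N+\sigma\fZ_N$ change of variables, then uniform integrability $\to$ tightness $\to$ uniqueness $\to$ singularity for part (iii), and a focusing trial drift for part (iv). However, there are two substantive gaps. First, in the lower bound of Proposition~\ref{uniexpint} you write that the $\norm{\Ups_N}{L^2}^{2\gamma}$ piece coming out of Young's inequality is ``absorbed into the mass-taming $A|\int\wick{u_N^2}|^\gamma$.'' This absorption is not a direct comparison: the taming sees $\int(2Y_N\Ups_N+\Ups_N^2)$, not $\int\Ups_N^2$, and the cross term $\int Y_N\Ups_N$ has no sign. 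What makes the absorption legitimate is a genuine new estimate (Lemma~\ref{phi33heart}), proved by decomposing $\Ups_N$ along the dyadic projections of $Y_N$ itself and splitting into a high-frequency part controlled via $\norm{\Ups_N}{H^\alpha}\norm{\Pi_{>j_0}Y_N}{H^{-\alpha}}$ and a low-frequency part controlled via $\norm{\Pi_{\le j_0}Y_N}{L^2}^2$, then optimising $j_0$. Without this, the Young step at the critical exponent $\frac{12\alpha-2d}{4\alpha-d}=2\gamma$ leaves you with an uncontrolled $\norm{\Ups_N}{L^2}^{2\gamma}$.

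Second, and more seriously, your trial drift in part (iv) is incomplete. You take $\Ups_N=\lambda v_M$ (a pure bump, plus the $\sigma\fZ_N$ correction), and you yourself compute the taming contribution to be $\sim A\lambda^{2\gamma}$. Under the forced balance $\lambda\sim M^{2\alpha-d/2}=M^{\alpha/2}$, all three of the gain, kinetic cost, and taming term are of the \emph{same} order $M^{3\alpha}$, so the quantity that must become negative is of the form $\lambda^6\bigl(-c_1|\sigma|+c_2+c_3A\bigr)$; the resulting threshold $\sigma_1=\sigma_1(A)$ depends on $A$, whereas the theorem fixes a single $\sigma_1$ that works ``for any $A$.'' The paper's trial $\und\Ups_N=-Z_M+\sgn(\sigma)\sqrt{\kappa_M}f_M$ is specifically designed to sidestep this: the $-Z_M$ piece cancels $Y_N$ up to the (independent) future increment, and the coefficient $\sqrt{\kappa_M}$ on the frequency-annulus bump $f_M$ is tuned so that $\int(2Y_N\und\Ups_N+\und\Ups_N^2)\,\rd x \approx -2\kappa_M+\kappa_M\int f_M^2+O(1)=O(1)$ in $L^2(\PP)$ (Lemma~\ref{ZMest}), so the taming term is uniformly bounded while gain and kinetic remain $\sim M^{3\alpha}$. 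Only then does the $A$-dependence retreat into lower-order constants. Your bump $\lambda v_M$ with $\lambda\sim M^{\alpha/2}=\sqrt{\kappa_M}$ is essentially the $\sqrt{\kappa_M}f_M$ piece, but without the $-Z_M$ cancellation the construction does not close with an $A$-independent threshold; moreover, the paper also needs $\hat f$ supported in an annulus $\{\tfrac12<|\xi|\le1\}$ (rather than a generic nonnegative $\varphi$) for the estimates \eqref{fMH-c} and \eqref{ZMsmall}. Your sketch of the singularity in (iii) (``a tail event built from $\int u_N\,\rd\fZ_N$'') is also not quite the test statistic the paper uses: the paper shows $(\log N_k)^{-3/4}(V_{N_k}-\beta_{N_k})\to0$ in $L^2(\mu)$ but $\exp$ of the same quantity tends to $0$ in $L^1(\varrho)$, which is a slightly different (and more robust) separation.
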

\begin{rmk}
	The non-convergence pointed out in (iv) may not hold on a space with a weaker topology, (e.g. $\cC^{-c}$ with $c\gg1$ sufficiently large), but it does not hold on $\cA$, indicating that, even if it were to hold in some space, it would be credibly pathological.
\end{rmk}
\vskip10pt

The program for proving Part (iii) of Theorem~\ref{main} follows below:
\begin{enumerate}[label=\arabic*)]
	\item (Uniform exponential integrability) Prove the uniform boundedness
	\begin{equation*}
		\sup_{N\in\NN}\,\cZ_N\le C<\infty;
	\end{equation*}
	\item (Compactness) Prove tightness of the truncated measures $\{\varrho_N:N\in\NN\}$;
	\item (Unique limits) By (2) and Prokhorov's theorem \cite[Theorem~8.6.2]{Bog07}, any subsequence of $(\varrho_N)$ has a further subsequence which is convergent; proving uniqueness of limits allows us to conclude that the overall sequence converges to this same limit; this is the measure $\varrho$ in \eqref{Gibbs regular} and \eqref{Gibbs critical weak nonlinear} of Theorem~\ref{main};
	\item (Singularity) Prove that $\varrho$ is mutually singular with respect to $\mu$.
\end{enumerate}
Likewise we employ a similar approach to prove Part (iv) of the theorem. Here, one needs to construct a weak limit  $\vartheta_\delta$ of the measures $(\vartheta_{N,\delta})$, and prove
\begin{enumerate}[label=\arabic*)]
	\item (Well-definedness of $\varrho^\delta$) Prove that the quantity $\norm{u}\cA$ is $\vartheta_\delta$-a.s. finite; this allows us to define $\varrho^\delta$;
	\item (Non-normalisability of $\varrho^\delta$) Prove that $\varrho^\delta(\cD')=\infty$ (the approach largely follows the two-dimensional case in \cite{OST24}).
\end{enumerate}
\subsection{Construction of measures.} \label{constr}
In this subsection, we construct a limiting $\Phi^3_d$-measure $\varrho$ in the weakly nonlinear regime, and a $\sigma$-finite version of $\Phi^3_d$ via a reference measure. We first prove the uniform exponential integrability 
\begin{equation} \label{uniexpinteq}
	\sup_{N\in\NN} \cZ_N < \infty
\end{equation}
and in the case $d < 3\alpha$, construct $\varrho$ by dominated convergence; at $d = 3\alpha$ we follow the approach in \cite{OOT24}, proving tightness of the $\varrho_N$, and then using Prokhorov's theorem along with uniqueness of weak limits to obtain $\varrho$ as a weak limit.

\begin{prop}[Uniform exponential integrability] \label{uniexpint}
	Let $d,\alpha>0$ and let $V_N$ follow the definition given in Theorem~\ref{main}.
	\begin{enumerate}[label=(\roman*)]
		\item Let $d<2\alpha$. There exists some $\gamma_0$ in the interval $(1,2)$ such that, for all $\gamma\ge \gamma_0$, and any $A>0$ and $\sigma$, we have \eqref{uniexpinteq}.
		\item Let $d=2\alpha$. For any $\sigma$ and $A$ sufficiently large depending on $\sigma$ with $\gamma = 2$, or for any $\sigma,A$ with $\gamma > 2$, we have \eqref{uniexpinteq}.
		\item Let $2\alpha<d<3\alpha$. For any $\sigma,A$ with $\gamma=\frac{d}{d-2\alpha}$, we have \eqref{uniexpinteq}.
		\item Let $d=3\alpha$. There exists $\sigma_0>0$ such that, for $0<|\sigma|<\sigma_0$ and $A>0$ sufficiently large depending on $\sigma$, with $\gamma=\frac{d}{d-2\alpha}$, we have \eqref{uniexpinteq}.
		\item There exists a choice of $s>0$ and $q\in\ZZ$ for which the following is true. For any $A>0$, $\gamma\ge \frac{d}{d-2\alpha}$, $\sigma$, and $\delta>0$, we have
		\begin{equation} \label{uniexpintiii}
			\sup_{N\in\NN} \cZ_{N,\delta} < \infty.
		\end{equation}
	\end{enumerate}
\end{prop}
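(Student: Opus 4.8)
The plan is to prove all five bounds by a single scheme based on the Bou\'e--Dupuis formula (Lemma~\ref{b-d}) and the change of variable of Subsection~\ref{c-o-v}. Apply Lemma~\ref{b-d} with $F=V_N$ (resp.\ $F=W_{N,\delta}$ in (v)), pull out the deterministic constant $\beta_N$, and complete the square as there via $\dot\Ups^N=\dot\Theta-\sigma\dot\fZ_N$: the constant $\beta_N$ then exactly cancels $\tfrac{\sigma^2}2\EE\int_0^1\norm{\dot\fZ_N(t)}{H^\alpha}^2\,\rd t$ when $d=3\alpha$, while when $d<3\alpha$ this quantity is bounded uniformly in $N$ and $\beta_N=0$. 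Writing $R^2:=\int_0^1\norm{\dot\Ups^N(t)}{H^\alpha}^2\,\rd t$ (so $\norm{\Ups_N}{H^\alpha}\lesssim R$ by Cauchy--Schwarz in $t$) and $\Theta_N=\Ups_N+\sigma\fZ_N$, one is reduced to bounding, uniformly in $N$ and over all admissible drifts, the expression displayed at the end of Subsection~\ref{c-o-v} from below---with $\delta\norm{Y_N+\Theta_N}{\cA}^q$ added only in case (v). As $\EE[\tfrac12R^2]\ge0$ and the taming terms are nonnegative, it suffices to bound the two cubic terms $-\tfrac\sigma3\int_{\TT^d}Y_N\Theta_N^2\,\rd x$ and $-\tfrac\sigma3\int_{\TT^d}\Theta_N^3\,\rd x$ below by $-C-\tfrac14R^2-\tfrac12(\text{taming terms})$, with $C$ independent of $N$ and of the drift and all residual stochastic objects having finite expectation uniformly in $N$.

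\textbf{Dispatching the routine terms.} Expanding $\Theta_N=\Ups_N+\sigma\fZ_N$ throughout, the term $\int_{\TT^d}\wick{Y_N^3}\,\rd x$ has zero expectation (Lemma~\ref{pathwreg}(iii)) and drops out; the remaining purely stochastic contributions ($\sigma^3\int Y_N\fZ_N^2\,\rd x$, $\sigma^4\int\fZ_N^3\,\rd x$, and the piece $\int_{\TT^d}\wick{Y_N^2}\,\rd x$ inside the mass) are bounded in $L^1(\Omega)$ uniformly in $N$ by Lemma~\ref{pathwreg}(i),(iii), the product and duality estimates of Lemma~\ref{besovest}, and Lemma~\ref{hypercontractivity}; it is essential here to measure $\fZ_N$ in $\cC^{4\alpha-d-\eps}$, where $\EE\norm{\fZ_N}{\cC^{4\alpha-d-\eps}}^p<\infty$ uniformly in $N$ for every $p$, and not in $H^\alpha$, in which $\norm{\fZ_N}{H^\alpha}\sim\sqrt{\log N}$ at $d=3\alpha$ by Lemma~\ref{pathwreg}(ii). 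Each mixed term, i.e.\ each one carrying a factor $\fZ_N$ (e.g.\ $\sigma^2\int Y_N\Ups_N\fZ_N\,\rd x$, $\sigma^2\int\Ups_N^2\fZ_N\,\rd x$, $\sigma^3\int\Ups_N\fZ_N^2\,\rd x$), has a detachable stochastic coefficient (one of $\norm{Y_N\fZ_N}{\cC^{\alpha-d/2-\eps}}$, $\norm{\fZ_N}{\cC^{4\alpha-d-\eps}}$); pairing the surviving powers of $\Ups_N$ by Lemma~\ref{besovest}(c),(e) and using Young's inequality absorbs them into $\tfrac18R^2$, into a fraction of the taming terms, and into the stochastic error. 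There remain only $\int_{\TT^d}\Ups_N^3\,\rd x$, the quadratic piece $\norm{\Ups_N}{L^2}^2$ coming from $\int_{\TT^d}\wick{(Y_N+\Theta_N)^2}\,\rd x$, and---at $d=3\alpha$, where it becomes critical---$\int_{\TT^d}Y_N\Ups_N^2\,\rd x$.

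\textbf{The critical cubic estimate: the main obstacle.} For $\int\Ups_N^3$ use $|\int\Ups_N^3|\le\norm{\Ups_N}{L^3}^3$ and the Gagliardo--Nirenberg-type interpolation $\norm{\Ups_N}{L^3}\lesssim\norm{\Ups_N}{H^\alpha}^\theta\norm{\Ups_N}{L^2}^{1-\theta}$ with $\alpha\theta=d/6$ (valid since $d\le6\alpha$), so $3\theta=d/(2\alpha)\le3/2<2$. The power $\norm{\Ups_N}{H^\alpha}^{3\theta}\lesssim R^{3\theta}$ is then absorbed into $\tfrac14R^2$ by Young's inequality and the companion power $\norm{\Ups_N}{L^2}^{3(1-\theta)}$ by the mass taming. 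The latter requires the dichotomy: either $\bigl|\int\wick{(Y_N+\Theta_N)^2}\bigr|\gtrsim\norm{\Theta_N}{L^2}^2$---and one concludes directly from $A\bigl|\int_{\TT^d}\wick{(Y_N+\Theta_N)^2}\,\rd x\bigr|^\gamma$ with $\gamma=\tfrac{d}{d-2\alpha}$ for $2\alpha<d\le3\alpha$ ($\gamma=2$, or any $\gamma>2$, for $d=2\alpha$; any $\gamma$ above a Gagliardo--Nirenberg threshold $\gamma_0\in(1,2)$ for $d<2\alpha$, where no Wick ordering is needed and $Y_N$ is a genuine function), taking $A=A(\sigma)$ large in the borderline cases; or $\bigl|\int\wick{(Y_N+\Theta_N)^2}\bigr|\ll\norm{\Theta_N}{L^2}^2$, in which case $\norm{\Theta_N}{L^2}$ is controlled through $\int_{\TT^d}Y_N\Theta_N\,\rd x$ and $\int_{\TT^d}\wick{Y_N^2}\,\rd x$ (hence through $R$ and stochastic quantities), and one closes by Young again. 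At $d=3\alpha$ the analogous treatment of $\int Y_N\Ups_N^2$ via the fractional Leibniz rule (Lemma~\ref{besovest}(e)) and interpolation is \emph{critical}: the product estimates used lose only a power of $\eps$, and tracking constants one is left with a contribution proportional to $\sigma^2R^2$ whose coefficient is a fixed structural constant $C$---neither the free Young constant nor a stochastic factor with good moments---so that absorption into $\tfrac14R^2$ forces $|\sigma|<\sigma_0:=(2C)^{-1/2}$. This borderline balance between the renormalised cubic nonlinearity and the mass taming is exactly the phenomenon of \cite{OOT24}, and I expect it to be the technical heart of the argument.

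\textbf{The $\cA$-tamed measure, part (v).} Here $|\sigma|$ is unrestricted, the summand $\delta\norm{Y_N+\Theta_N}{\cA}^q$ replacing its smallness. The exponent $s=\alpha-d/6-\eps>0$ (which is where $d<6\alpha$ enters) is chosen so that, by Lemma~\ref{schauder}, $\cA=B^{-2\alpha+d/3+2\eps}_{3,\infty}$ and $L^3$ lies strictly between $\cA$ and $B^{\alpha-d/6}_{3,2}\supset H^\alpha$; interpolation then gives $\norm{\Theta_N}{L^3}^3\lesssim\norm{\Theta_N}{\cA}^{1+c\eps}\norm{\Theta_N}{H^\alpha}^{2-c\eps}$ with $c>0$, whence Young's inequality yields $|\sigma|\,|\int\Theta_N^3|\le\tfrac14R^2+C_{\sigma,\eps}\norm{\Theta_N}{\cA}^{q_0}$ for some exponent $q_0$ depending only on $\eps$; the same $\cA$--$H^\alpha$ interpolation handles $\int Y_N\Theta_N^2$ and the mass term. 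Using $\norm{\Theta_N}{\cA}^{q_0}\lesssim\norm{Y_N+\Theta_N}{\cA}^{q_0}+\norm{Y_N}{\cA}^{q_0}$ (the second bounded in $L^1(\Omega)$ by $\cC^{\alpha-d/2-\eps}\hookrightarrow\cA$ and Lemma~\ref{pathwreg}(i)) and fixing \emph{once and for all} an integer $q>q_0$, a final Young's inequality absorbs $C_{\sigma,\eps}\norm{Y_N+\Theta_N}{\cA}^{q_0}$ into $\tfrac12\delta\norm{Y_N+\Theta_N}{\cA}^q$ for \emph{every} $\delta>0$, at the cost of a constant depending on $\delta,\sigma,\eps$. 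This gives \eqref{uniexpintiii} for all $A$, all $\sigma$, and all $\gamma\ge\tfrac{d}{d-2\alpha}$.
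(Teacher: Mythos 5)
Your high-level reduction is aligned with the paper's: Bou\'e--Dupuis with the change of variable $\dot\Ups^N=\dot\Theta-\sigma\dot\fZ_N$ to cancel $\beta_N$, measuring $\fZ_N$ in $\cC^{4\alpha-d-\eps}$ (not $H^\alpha$), and using the fractional Leibniz/duality/interpolation estimates of Lemma~\ref{besovest} on the mixed terms. You also correctly identify the critical balance at $d=3\alpha$ coming from $\tfrac{12\alpha-2d}{4\alpha-d}=2\gamma$ and the consequent need for $|\sigma|$ small. Part~(v) you handle by interpolating $L^3$ between $\cA$ and $H^\alpha$; the paper instead uses a Schauder estimate plus a random choice of heat-kernel time $t$, but your route is plausible modulo a small loss of Besov regularity (interpolating $B^{-2s}_{3,\infty}$ against $B^{\alpha-d/6}_{3,2}$ lands in $B^0_{3,q}$ with $q>2$, which does not embed into $L^3$; you should pay an $\eps$ of derivative to repair this).

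The genuine gap is in your ``dichotomy.'' Your first arm is exactly the first step of the paper's Lemma~\ref{phi33heart}: on $\{\norm{\Ups_N}{L^2}^2>|\int Y_N\Ups_N|\}$ one has $\tfrac12\norm{\Ups_N}{L^2}^2\le|\int(2Y_N\Ups_N+\Ups_N^2)|$ and one concludes. But your second arm---``$\norm{\Theta_N}{L^2}$ is controlled through $\int Y_N\Theta_N$ and $\int\wick{Y_N^2}$ ... and one closes by Young again''---does not work. Na\"ive duality $|\int Y_N\Ups_N|\lesssim\norm{Y_N}{\cC^{-\alpha/2-\eps}}\norm{\Ups_N}{L^2}^{1/2}\norm{\Ups_N}{H^\alpha}^{1/2}$ followed by Young gives at best $\norm{\Ups_N}{L^2}^2\lesssim\norm{Y_N}{\cC^{-\alpha/2-\eps}}^{4/3}\norm{\Ups_N}{H^\alpha}^{2/3}$, i.e.\ the $\norm{\Ups_N}{H^\alpha}^{2/3}$ power carries a random multiplier. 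When this is raised to the power $\gamma=\tfrac{d}{d-2\alpha}$ (to attack $\norm{\Ups_N}{L^2}^{2\gamma}$) and the result is compared against $\tfrac12\int_0^1\norm{\dot\Ups^N}{H^\alpha}^2\,\rd t$, the $\norm{\Ups_N}{H^\alpha}^{(2d-4\alpha)\gamma/d}=\norm{\Ups_N}{H^\alpha}^2$ term can only be absorbed if its prefactor is a \emph{deterministic} constant that can be made small (at $d<3\alpha$) or small for small $|\sigma|$ (at $d=3\alpha$), not a stochastic quantity. What the paper actually proves in Lemma~\ref{phi33heart} is the sharper estimate
\[
\norm{\Ups_N}{L^2}^2\lesssim\Big|\int_{\TT^d}(2Y_N\Ups_N+\Ups_N^2)\,\rd x\Big|+\norm{\Ups_N}{H^\alpha}^{\tfrac{2d-4\alpha}{d}}+B,
\]
with deterministic implied constant and an additive random $B$ having all moments. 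This requires the $L^2$-orthogonal projection of $\Ups_N$ onto the dyadic blocks $\Pi_jY_N$, a high/low frequency split at a cutoff $j_0$ chosen as a (random) function of $\norm{\Ups_N}{H^\alpha}$, and Minkowski's integral inequality, hypercontractivity (Lemma~\ref{hypercontractivity}), and Hermite orthogonality (Lemma~\ref{hermiteorthogonality}) to control the moments of the stochastic pieces $B_1,B_2$. None of this is recoverable from Young's inequality alone, so your proposal is missing the technical heart of the proof.
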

\begin{proof}[Proof of Proposition~\ref{uniexpint} (i).] 
	Assume $d<2\alpha$. Since $Ax^\gamma\ge Ax^{\gamma_0}-A$, it suffices to prove the result for $\gamma=\gamma_0$. By the Bou\'e-Dupuis formula, we have
	\begin{align*}
		-\log\cZ_N = \inf_{\theta\in\HH_{\rm a}}\EE\Bigl[&-\frac\sigma3\int_{\TT^d} (Y_N+\Theta_N)^3\, \rd x \\
		&+A\Big|\int_{\TT^d}(Y_N+\Theta_N)^2\, \rd x\Big|^\gamma+\frac12\int_0^1\norm{\dot\Ups^N(t)}{H^\alpha}^2\, \rd t.
	\end{align*}
	Expanding the above, we deal with each term in turn. Recall that $\EE[\int_{\TT^d} Y_N^3\, \rd x]=0$. By \eqref{besovduality}, \eqref{besovfractionalleibniz}, and Young's inequality, we have
	\begin{align*}
		\Bigl|\int_{\TT^d} Y_N^2\Theta_N\,\rd x\Bigr| &\lesssim \norm{Y_N^2}{H^{\alpha-\frac d2-2\eps}}\norm{\Theta_N}{H^{-\alpha+\frac d2+2\eps}} \\
		&\lesssim \norm{Y_N}{L^2}\norm{Y_N}{\cC^{\alpha-\frac d2-\eps}}\norm{\Theta_N}{H^{-\alpha+\frac d2+\eps}} \\
		&\lesssim C(\delta)(\norm{Y_N}{L^2}^4+\norm{Y_N}{\cC^{\alpha-\frac d2-\eps}}^4)+\delta\norm{\Theta_N}{H^\alpha}^2
	\end{align*}
	and analogously
	\begin{align*}
		\Bigl|\int_{\TT^d} Y_N\Theta_N^2\,\rd x\Bigr| &\lesssim \norm{Y_N}{\cC^{\alpha-\frac d2-\eps}}\norm{\Theta_N^2}{H^{-\alpha+\frac d2+2\eps}} \\
		&\lesssim \norm{Y_N}{\cC^{\alpha-\frac d2-\eps}}\norm{\Theta_N}{L^2}\norm{\Theta_N}{H^{-\alpha+\frac d2+2\eps}} \\
		&\lesssim C(\delta)\norm{Y_N}{\cC^{\alpha-\frac d2-\eps}}^{c(\eps)}+\delta\norm{\Theta_N}{L^2}^{2+\eps}+\delta\norm{\Theta_N}{H^\alpha}^2;
	\end{align*}
	likewise, by \eqref{besovembeddings}, \eqref{sobolevinterpolation}, and Young's inequality, we have
	\begin{align*}
		\Bigl|\int_{\TT^d} \Theta_N^3\,\rd x\Bigr| &\lesssim \norm{\Theta_N}{H^{\frac d6}} \\
		&\lesssim \norm{\Theta_N}{L^2}^{3-\frac{d}{2\alpha}}\norm{\Theta_N}{H^\alpha}^{\frac{d}{2\alpha}} \\
		&\le C(\delta)\norm{\Theta_N}{L^2}^\frac{12\alpha-2d}{4\alpha-d}+\delta\norm{\Theta_N}{H^\alpha}^2;
	\end{align*}
	moreover, 
	\begin{equation*}
		A\Bigl|\int_{\TT^d} (Y_N+\Theta_N)^2\, \rd x\Bigr|^\gamma \ge \frac A2\Bigl|\int_{\TT^d} \Theta_N^2\, \rd x\Bigr|^\gamma - C_1\Bigl(\Bigl|\int_{\TT^d} Y_N\Theta_N\, \rd x\Bigr|^\gamma - \norm{Y_N}{L^2}^\gamma\Bigr)
	\end{equation*}
	and we can argue as above to bound $|\int_{\TT^d} Y_N\Theta_N\, \rd x|^\gamma$. Using Lemma~\ref{pathwreg}, we arrive at
	\begin{align*}
		-\log\cZ_N &\ge \inf_{\theta\in\HH_\mathrm{a}} \EE\Bigl[-C_2\sigma(\delta\norm{\Theta_N}{H^\alpha}^2+\delta\norm{\Theta_N}{L^2}^{2+\eps}+C(\delta)\norm{\Theta_N}{L^2}^\frac{12\alpha-2d}{4\alpha-d} \\
		&\hskip70pt +C(\delta)(\norm{Y_N}{L^2}^4+\norm{Y_N}{L^2}^{2\gamma}+\norm{Y_N}{\cC^{\alpha-\frac d2-\eps}}^{c(\eps,\gamma)})  \\
		&\hskip70pt +\frac A2\norm{\Theta_N}{L^2}^{2\gamma}+\frac12\norm{\Theta_N}{H^\alpha}^2\Bigr] \\
		&\ge \inf_{\theta\in\HH_\mathrm{a}} \EE\Bigl[-C_2\sigma(\delta\norm{\Theta_N}{L^2}^{2+\eps}+C(\delta)\norm{\Theta_N}{L^2}^\frac{12\alpha-2d}{4\alpha-d})+\frac A2\norm{\Theta_N}{L^2}^{2\gamma}\Bigr]-C,
	\end{align*}
	and now, as $d<2\alpha$, an appropriate choice for $\gamma$ exists in the interval $(1,2)$ such that the above is bounded below uniformly in $N$. 
\end{proof}

To prove the remainder of the proposition, we will use the change-of-variable described at the beginning of Subsection~\ref{c-o-v}, and require the following lemma, estimating cross-terms which arise when using the Bou\'e-Dupuis formula as above. We delay the proof of this lemma until the end of this subsection.

\begin{lemma} \label{b-dcrossterms} Assume that $2\alpha\le d<4\alpha$. Let $\delta>0$. There exists some $\eps>0$, exponent $c\ge 1$, and constant $C(\delta)>0$ such that
	\begin{align}
		&\Big|\int_{\TT^d} Y_N\Theta_N^2\, \rd x\Big|\lesssim 1+\delta\norm{\Ups_N}{L^2}^\frac{12\alpha-2d}{4\alpha-d}+\delta\norm{\Ups_N}{H^\alpha}^{\max\{\frac{2d-4\alpha}{d},\eps\}} \label{YNTN2bd} \\
		&\hskip80pt+C(\delta)\norm{Y_N}{\cC^{\alpha-\frac d2-\eps}}^c+C(\delta)\norm{\fZ_N}{\cC^{4\alpha-d-\eps}}^c \notag \\
		&\Big|\int_{\TT^d} \Theta_N^3\, \rd x\Big| \lesssim 1+C(\delta)\norm{\Ups_N}{L^2}^\frac{12\alpha-2d}{4\alpha-d}+\delta\norm{\Ups_N}{H^\alpha}^2+\norm{\fZ_N}{\cC^{4\alpha-d-\eps}}^c \label{TN3bd}
	\end{align}
and, for all $\gamma\ge 1$,
	\begin{align}
		A\Big|\int_{\TT^d}\wick{(Y_N+\Theta_N)^2}\, \rd x\Big|^\gamma &\ge \frac{A}{2}\Big|\int_{\TT^d}(2Y_N\Ups_N+\Ups_N^2)\, \rd x\Big|^\gamma-\delta\norm{\Ups_N}{L^2}^{2\gamma} \label{tamingtermbd} \\
		&\hskip20pt-C(\delta)\Bigl(\Big|\int_{\TT^d}\wick{Y_N^2}\, \rd x\Big|^\gamma+\norm{Y_N}{\cC^{\alpha-\frac{d}{2}-\eps}}^{2\gamma}+\norm{\fZ}{\cC^{4\alpha-d-\eps}}^c\Bigr); \notag
	\end{align}
	when $d = 2\alpha$, we also have
	\begin{align}
		A\Big|\int_{\TT^d}\wick{(Y_N+\Theta_N)^2}\, \rd x\Big|^\gamma &\ge \frac{A}{2}\Big|\int_{\TT^d}\Ups_N^2\, \rd x\Big|^\gamma-\delta\norm{\Ups_N}{L^2}^{2\gamma}-\delta\norm{\Ups_N}{H^\alpha}^2 \label{tamingtermbdeasy} \\
		&\hskip20pt-C(\delta)\Bigl(\Big|\int_{\TT^d}\wick{Y_N^2}\, \rd x\Big|^\gamma+\norm{Y_N}{\cC^{\alpha-\frac{d}{2}-\eps}}^{2\gamma}+\norm{\fZ}{\cC^{4\alpha-d-\eps}}^c\Bigr). \notag
	\end{align}
\end{lemma}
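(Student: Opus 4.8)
The plan is to expand each quantity in terms of $\Ups_N$ using the defining relation $\Theta_N = \Ups_N + \sigma\fZ_N$ (which follows from integrating \eqref{Upsdef}), and then bound the resulting terms one by one using the deterministic estimates of Lemma~\ref{besovest} and the pathwise regularity of Lemma~\ref{pathwreg}. The heuristic is that $\fZ_N$ is \emph{more} regular than $\Theta_N$ — it lives in $\cC^{4\alpha-d-\eps}$ by the Schauder-type gain of $2\alpha$ derivatives over $\wick{Y_N^2}\in\cC^{2\alpha-d-\eps}$, cf.\ Lemma~\ref{schauder} and Lemma~\ref{pathwreg}(i) — so all terms containing $\fZ_N$ should be absorbable into the ``good'' stochastic-norm factors $C(\delta)(\cdots)^c$, and the genuinely dangerous contributions are exactly the ones built purely from $\Ups_N$ and $Y_N$, which is why the right-hand sides isolate powers of $\norm{\Ups_N}{L^2}$ and $\norm{\Ups_N}{H^\alpha}$.

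The key steps, in order:
\textbf{(1)} Substitute $\Theta_N = \Ups_N + \sigma\fZ_N$ and expand $\int Y_N\Theta_N^2$, $\int\Theta_N^3$, $\int\wick{(Y_N+\Theta_N)^2}$ multinomially; the number of terms is finite and each is trilinear (resp.\ quadratic) in the inputs.
\textbf{(2)} For $\int Y_N\Ups_N^2$: pair $Y_N$ (in $\cC^{\alpha-\frac d2-\eps}$) against $\Ups_N^2$ via duality \eqref{besovduality}, use the fractional Leibniz rule \eqref{besovfractionalleibniz} to write $\norm{\Ups_N^2}{H^{-\alpha+\frac d2+2\eps}}\lesssim\norm{\Ups_N}{L^2}\norm{\Ups_N}{H^{-\alpha+\frac d2+2\eps}}$, then interpolate \eqref{sobolevinterpolation} between $L^2$ and $H^\alpha$ and apply Young's inequality to produce the exponents $\frac{12\alpha-2d}{4\alpha-d}$ on $\norm{\Ups_N}{L^2}$ and $\max\{\frac{2d-4\alpha}{d},\eps\}$ on $\norm{\Ups_N}{H^\alpha}$ — the $\max$ with $\eps$ reflecting that the $H^\alpha$-exponent $\frac{2(d-2\alpha)}{d}$ degenerates to $0$ as $d\downarrow 2\alpha$, so one keeps a harmless $\delta\norm{\Ups_N}{H^\alpha}^\eps$ there. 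Cross terms $\int Y_N\Ups_N\fZ_N$ and $\int Y_N\fZ_N^2$ are handled the same way but, since $\fZ_N$ carries positive regularity, every derivative deficit is covered and these terms drop into $C(\delta)\norm{Y_N}{\cC^{\alpha-\frac d2-\eps}}^c + C(\delta)\norm{\fZ_N}{\cC^{4\alpha-d-\eps}}^c$.
\textbf{(3)} For $\int\Theta_N^3$: the leading term $\int\Ups_N^3$ is estimated exactly as $\int\Theta_N^3$ was in the proof of Proposition~\ref{uniexpint}(i) — embed $H^{d/6}$, interpolate, Young — giving $C(\delta)\norm{\Ups_N}{L^2}^{\frac{12\alpha-2d}{4\alpha-d}}+\delta\norm{\Ups_N}{H^\alpha}^2$; all terms with at least one factor of $\fZ_N$ again land in the $\norm{\fZ_N}{\cC^{4\alpha-d-\eps}}^c$ bin after a routine Leibniz/duality split.
\textbf{(4)} For the taming term: expand $\wick{(Y_N+\Theta_N)^2} = \wick{Y_N^2} + 2Y_N\Theta_N + \Theta_N^2$ and then $\Theta_N = \Ups_N+\sigma\fZ_N$, so the ``main'' piece is $\int(2Y_N\Ups_N+\Ups_N^2)$ and the remainder is $\int\wick{Y_N^2} + (\text{terms with }\fZ_N)$; use the elementary inequality $|a+b|^\gamma\ge 2^{-\gamma}|a|^\gamma - C|b|^\gamma$ to split, then bound $|\int(\text{remainder})|^\gamma$ by $C(\delta)(|\int\wick{Y_N^2}|^\gamma + \norm{Y_N}{\cC^{\alpha-\frac d2-\eps}}^{2\gamma} + \norm{\fZ}{\cC^{4\alpha-d-\eps}}^c) + \delta\norm{\Ups_N}{L^2}^{2\gamma}$ (the $Y_N\fZ_N$ and $\fZ_N^2$ pieces bounded crudely in $L^1$, then raised to the $\gamma$ using $|x+y|^\gamma\lesssim|x|^\gamma+|y|^\gamma$). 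The case $d=2\alpha$ is genuinely easier because then $\wick{Y_N^2}\in\cC^{-\eps}$ pairs against $\Ups_N\in H^\alpha$ directly (no $L^2\times L^2$ obstruction), so one can also peel off the cross term $\int Y_N\Ups_N$ into $\delta\norm{\Ups_N}{H^\alpha}^2 + C(\delta)\norm{Y_N}{\cC^{-\eps}}^2$, yielding \eqref{tamingtermbdeasy} with only $\int\Ups_N^2$ remaining on the main side.

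The main obstacle is bookkeeping of exponents in step~(2)–(3): one must verify that the interpolation exponent $\theta$ placing $\norm{\Ups_N}{H^\alpha}$ below the critical power $2$ is compatible across \emph{all} the cross terms simultaneously, and that the threshold $d<4\alpha$ (rather than $3\alpha$) is exactly what is needed for $\fZ_N\in\cC^{4\alpha-d-\eps}$ to have positive regularity — for $d$ close to $4\alpha$ this regularity shrinks to $0$, which is why the hypothesis is stated as $d<4\alpha$ and why some $\fZ_N$-terms must be written with a $\cC^{4\alpha-d-\eps}$ norm rather than something stronger. A secondary delicate point is ensuring that after summing all the $\delta$-coefficients from the several terms, the total coefficient of $\norm{\Ups_N}{H^\alpha}^2$ (and of $\norm{\Ups_N}{L^2}^{2\gamma}$ in the taming estimate) is still an arbitrarily small multiple of the available ``good'' terms $\tfrac12\norm{\dot\Ups^N}{H^\alpha}^2$ and $\tfrac A2|\int(\cdots)|^\gamma$ — but since there are finitely many terms and $\delta$ is free, this is only a matter of relabeling $\delta\rightsquigarrow\delta/C$.
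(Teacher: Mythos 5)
Your proposal follows essentially the same route as the paper's proof: substitute $\Theta_N=\Ups_N+\sigma\fZ_N$, expand multinomially, estimate each term by duality \eqref{besovduality}, fractional Leibniz \eqref{besovfractionalleibniz}, interpolation \eqref{sobolevinterpolation}, and Young's inequality, and handle the taming term with the elementary inequality $|a+b|^\gamma\gtrsim|a|^\gamma-|b|^\gamma$ after isolating the $\int(2Y_N\Ups_N+\Ups_N^2)$ piece. One small imprecision: the terms carrying a factor of $\fZ_N$ in $\int\Theta_N^3$ (e.g.\ $\int\Ups_N^2\fZ_N$) do not land purely in the $\norm{\fZ_N}{\cC^{4\alpha-d-\eps}}^c$ bin — after duality and Young one still pays $\norm{\Ups_N}{L^2}$- and $\norm{\Ups_N}{H^\alpha}$-contributions, which is exactly why \eqref{TN3bd} keeps $\delta\norm{\Ups_N}{H^\alpha}^2$ and $C(\delta)\norm{\Ups_N}{L^2}^{\frac{12\alpha-2d}{4\alpha-d}}$ on the right; but this is a matter of phrasing, not a gap.
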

\begin{proof}[Proof of Proposition~\ref{uniexpint} (ii).]
	Using the Bou\'e-Dupuis formula and our change-of-variable, we have 
	\begin{align*}
		-\log\cZ_N = \inf_{\dot\Ups_N\in\HH_{\rm a}^\alpha}\EE\Bigl[&-\sigma\int_{\TT^d}Y_N\Theta_N^2\, \rd x-\frac\sigma3\int_{\TT^d}\Theta_N^3\, \rd x \\
		&+A\Big|\int_{\TT^d}\wick{(Y_N+\Theta_N)^2}\, \rd x\Big|^\gamma \\
		&+\frac12\int_0^1\norm{\dot\Ups^N(t)}{H^\alpha}^2\, \rd t \\
		&+\Bigl(\beta_N-\int_0^1\norm{\dot\fZ_N(t)}{H^\alpha}^2\, \rd t\Bigr)\Big].
	\end{align*}
	By \eqref{tamingtermbdeasy} in Lemma~\ref{b-dcrossterms}, picking $\delta$ small enough depending on $A$, we have
	\begin{align*}
		-\log\cZ_N \ge \inf_{\dot\Ups^N\in\HH_\mathrm{a}^\alpha} \EE\Bigl[&-C_1|\sigma|(1+C(\delta)\norm{\Ups_N}{L^2}^4+\delta\norm{\Ups_N}{H^\alpha}^2) \\ 
		&+\frac A2\norm{\Ups_N}{L^2}^{2\gamma}+\frac12\int_0^1 \norm{\dot\Ups^N(t)}{H^\alpha}^2\, \rd t\Bigr].
	\end{align*}
	Once again picking $\delta$ sufficiently small (and $A$ sufficiently large where necessary) completes the proof.
\end{proof}
For the proof of Proposition~\ref{uniexpint} in the case $d=2\alpha$, it was enough to control the cubic term $\norm{\Theta_N}{L^3}^3$ in terms of $\norm{\Ups_N}{L^2}^{2\gamma}$ and $\norm{\Ups_N}{H^\alpha}^2$. This is not the case in the setting $d=3\alpha$, and so we offer the following lemma, in which we control $\norm{\Theta_N}{L^3}^3$ in terms of $|\int (2Y_N\Ups_N+\Ups_N^2)|^\gamma$ and $\norm{\Ups_N}{H^\alpha}^2$ and an additional random variable $B$ with finite moments. Again, the proof is delayed until the end of the subsection.
\begin{lemma} \label{phi33heart}
	Assume $d<4\alpha$. There exists a nonnegative random variable $B$ on $(\Omega,\PP)$ with $\EE B^p<\infty$ for all $p\ge 1$ such that
	\begin{equation} \label{phi33hearteq}
		\norm{\Ups_N}{L^2}^2 \lesssim \Big|\int_{\TT^d}(2Y_N\Ups_N+\Ups_N^2)\, \rd x\Big|+\norm{\Ups_N}{H^\alpha}^\frac{2d-4\alpha}{d}+B. 
	\end{equation}
\end{lemma}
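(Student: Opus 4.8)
The plan is to reduce everything to a single cross-term via ``completing the square''. Writing out the quadratic integral,
\begin{equation*}
	\int_{\TT^d}(2Y_N\Ups_N+\Ups_N^2)\,\rd x = 2\int_{\TT^d} Y_N\Ups_N\,\rd x + \norm{\Ups_N}{L^2}^2,
\end{equation*}
so that $\norm{\Ups_N}{L^2}^2 = \int_{\TT^d}(2Y_N\Ups_N+\Ups_N^2)\,\rd x - 2\int_{\TT^d} Y_N\Ups_N\,\rd x$. Bounding the first term by its modulus, it suffices to show
\begin{equation*}
	2\Bigl|\int_{\TT^d} Y_N\Ups_N\,\rd x\Bigr| \le \tfrac12\norm{\Ups_N}{L^2}^2 + C\norm{\Ups_N}{H^\alpha}^{\frac{2d-4\alpha}{d}} + B
\end{equation*}
for a nonnegative $B$ with $\sup_N\EE B^p<\infty$ for all $p\ge1$; then \eqref{phi33hearteq} follows by absorbing the two $\norm{\Ups_N}{L^2}^2$ contributions and rescaling. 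This also clarifies the role of the estimate: it is exactly the cross-term obstructing the square-completion in the taming bound \eqref{tamingtermbd}, now paid for by a sub-quadratic power of $\norm{\Ups_N}{H^\alpha}$ and a random constant.

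For the cross-term I would use duality followed by interpolation. Because $d<4\alpha$, the interval $(\tfrac d2-\alpha,\alpha)$ is nonempty---this is the one genuinely sharp use of the hypothesis---so fix $\beta=\tfrac d2-\alpha+2\eps$ in it with $\eps$ small. By the $H^{-\beta}$--$H^\beta$ duality pairing (Lemma~\ref{besovest}(c), i.e. Plancherel),
\begin{equation*}
	\Bigl|\int_{\TT^d} Y_N\Ups_N\,\rd x\Bigr| \le \norm{Y_N}{H^{-\beta}}\,\norm{\Ups_N}{H^\beta},
\end{equation*}
and since $-\beta<\alpha-\tfrac d2-\eps$, the embedding $\cC^{\alpha-\frac d2-\eps}\hookrightarrow H^{-\beta}$ (Lemma~\ref{besovest}(b)) gives $\norm{Y_N}{H^{-\beta}}\lesssim\norm{Y_N}{\cC^{\alpha-\frac d2-\eps}}$, whose $L^p(\Omega)$-norms are bounded uniformly in $N$ by Lemma~\ref{pathwreg}(i). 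Interpolating (Lemma~\ref{besovest}(a)) with $\tfrac\beta\alpha\in[0,1)$,
\begin{equation*}
	\norm{\Ups_N}{H^\beta}\lesssim\norm{\Ups_N}{L^2}^{1-\frac\beta\alpha}\,\norm{\Ups_N}{H^\alpha}^{\frac\beta\alpha},
\end{equation*}
after which Young's inequality---first peeling off $\tfrac14\norm{\Ups_N}{L^2}^2$ (legitimate as $1-\tfrac\beta\alpha<2$), then separating the remaining product---delivers the displayed bound with $B$ a fixed power of $\norm{Y_N}{\cC^{\alpha-\frac d2-\eps}}$, hence with $N$-uniformly bounded moments. (If $\beta<0$, which can only occur when $\tfrac{2d-4\alpha}{d}\le0$, one simply uses $\norm{\Ups_N}{H^\beta}\le\norm{\Ups_N}{L^2}$ and no $H^\alpha$-power appears.)

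The point requiring care---more a bookkeeping caveat than a real obstacle---is the $\eps$-loss in the $H^\alpha$-exponent: the Young split produces the power $\tfrac{2\beta}{\alpha+\beta}=\tfrac{2d-4\alpha+8\eps}{d+4\eps}$, which decreases to $\tfrac{2d-4\alpha}{d}$ only as $\eps\downarrow0$ and is strictly larger for $\eps>0$. This loss is unavoidable, since $Y_N$ just fails to lie uniformly in the endpoint space $H^{-(d/2-\alpha)}$ (indeed $\EE\norm{Y_N}{H^{-(d/2-\alpha)}}^2\sim\log N$). It is harmless, however: $d<4\alpha$ forces $\tfrac{2d-4\alpha}{d}<1$, so for $\eps$ small the exponent stays well below $2$, and wherever the lemma is invoked this term is dominated with room to spare by the Bou\'e--Dupuis energy $\tfrac12\int_0^1\norm{\dot\Ups^N(t)}{H^\alpha}^2\,\rd t$ after one more application of Young's inequality. (Should a genuinely $N$-independent $B$ be preferred over one with $N$-uniform moments, replace it by the supremum over $N$ of the above, which is a.s. finite since $Y_N\to Y$ in $\cC^{\alpha-\frac d2-\eps}$, at the cost of a routine maximal estimate.)
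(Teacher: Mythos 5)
Your proof reaches the right-looking shape of the estimate but the $\eps$-loss you flag in the $H^\alpha$-exponent is a genuine gap, not a bookkeeping caveat. The reason the endpoint exponent $\tfrac{2d-4\alpha}{d}$ is written in the statement is that it is exactly what the applications require: in the passage from \eqref{ZNb-dbd} to \eqref{ZNb-dbd1}--\eqref{ZNb-dbd2}, and again in the proof of Proposition~\ref{uniqueness} (the bound $\EE[\norm{\und\Ups_{N}}{H^\alpha}^2+\norm{\und\Ups_{N}}{L^2}^{2\gamma}]\lesssim 1+\cE_{N}$), the inequality \eqref{phi33hearteq} is raised to the power $\gamma=\tfrac{d}{d-2\alpha}$, and
\begin{equation*}
\frac{2d-4\alpha}{d}\cdot\gamma \;=\; \frac{2d-4\alpha}{d-2\alpha}\;=\;2
\end{equation*}
\emph{exactly}. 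Only then does the resulting $\norm{\Ups_N}{H^\alpha}^{2}$ (with a small prefactor) get absorbed by the Bou\'e--Dupuis energy via $\norm{\Ups_N}{H^\alpha}^2\le\int_0^1\norm{\dot\Ups^N(t)}{H^\alpha}^2\,\rd t$. With your exponent $\tau=\tfrac{2d-4\alpha+8\eps}{d+4\eps}>\tfrac{2d-4\alpha}{d}$, raising to power $\gamma$ produces $\norm{\Ups_N}{H^\alpha}^{\tau\gamma}$ with $\tau\gamma>2$, which no choice of small coefficient can absorb into the quadratic energy. So ``one more application of Young's inequality'' is not available here: the quadratic power is the ceiling, and the lemma sits at that ceiling. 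Your own observation that $\EE\norm{Y_N}{H^{-(d/2-\alpha)}}^2\sim\log N$ explains why a single duality pairing with a global $H^{-\beta}$ norm of $Y_N$ cannot reach the endpoint.

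The paper's proof gets the endpoint precisely because it does not pass through a global Sobolev norm of $Y_N$. It dyadically decomposes $\Ups_N=\sum_j(\lambda_j\Pi_jY_N+w_j)$, projecting each Littlewood--Paley block of $\Ups_N$ onto $\Pi_jY_N$, so that $\int Y_N\Ups_N=\sum_j\lambda_j\norm{\Pi_jY_N}{L^2}^2$ and $\norm{\Ups_N}{L^2}^2\ge\sum_j\lambda_j^2\norm{\Pi_jY_N}{L^2}^2$ on the relevant event. Then it splits the sum at a random cutoff $j_0\sim\log_2(1+\norm{\Ups_N}{H^\alpha}^{2/d})$: the high-frequency tail is controlled by $\norm{\Ups_N}{H^\alpha}\norm{\Pi_{>j_0}Y_N}{H^{-\alpha}}$, and the low-frequency head is absorbed via Young plus $\norm{\Pi_{\le j_0}Y_N}{L^2}^2$. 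Both random quantities split as deterministic (scaling in $j_0$) plus Wick-fluctuation parts with $N$-uniform moments (the $B_1,B_2$), and optimising in $j_0$ balances the two scales to produce exactly $\norm{\Ups_N}{H^\alpha}^{\frac{2d-4\alpha}{d}}$ with no $\eps$. The moral is that the endpoint requires resolving the pairing $\int Y_N\Ups_N$ frequency-by-frequency, using the Gaussian structure of $Y_N$ (independence across Fourier modes, Hermite orthogonality) at the level of each dyadic shell, rather than the blunt estimate $\norm{Y_N}{\cC^{\alpha-\frac d2-\eps}}\norm{\Ups_N}{H^{\frac d2-\alpha+2\eps}}$.
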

\begin{rmk}
	We shall see from the proof of Lemma~\ref{phi33heart} and using the lemma itself, that
	\begin{equation} \label{phi33heartrmk}
		\EE\Big|\int_{\TT^d} Y_N\Ups_N\, \rd x\Big| \lesssim \EE\Bigl[\Big|\int_{\TT^d}(2Y_N\Ups_N+\Ups_N^2)\, \rd x\Big|+\norm{\Ups_N}{H^\alpha}^\frac{2d-4\alpha}{d}\Big].
	\end{equation}
	This will be of use below.
\end{rmk}

\begin{proof}[Proof of Proposition~\ref{uniexpint} (iii), (iv), (v).]
	We first prove (iii) and (iv) together. By the Bou\'e-Dupuis formula and our change-of-variable, 
	\begin{align*}
		-\log\cZ_N = \inf_{\dot\Ups_N\in\HH_{\rm a}^\alpha}\EE\Bigl[&-\sigma\int_{\TT^d}Y_N\Theta_N^2\, \rd x-\frac\sigma3\int_{\TT^d}\Theta_N^3\, \rd x \\
		&+A\Big|\int_{\TT^d}\wick{(Y_N+\Theta_N)^2}\, \rd x\Big|^\gamma \\
		&+\frac12\int_0^1\norm{\dot\Ups^N(t)}{H^\alpha}^2\, \rd t \\
		&+\Bigl(\beta_N-\int_0^1\norm{\dot\fZ_N(t)}{H^\alpha}^2\, \rd t\Bigr)\Big];
	\end{align*}
	hence we wish to find a uniform lower bound for the right-hand side of the display above. Thanks to the renormalisation via $(\beta_N)$ and Lemma~\ref{pathwreg}, the final term above is uniformly bounded under expectation. Apply Lemma \ref{b-dcrossterms}, meanwhile also using Lemma~\ref{pathwreg}, to obtain 
	\begin{equation} \label{ZNb-dbd} \begin{aligned} 
		-&\log\cZ_N \\
		&\ge \inf_{\dot\Ups^N\in\HH_{\rm a}^\alpha}\EE\Bigl[-\sigma\int_{\TT^d}Y_N\Theta_N^2\, \rd x-\frac\sigma3\int_{\TT^d}\Theta_N^3\, \rd x-\delta'\norm{\Ups_N}{L^2}^{2\gamma} \\
		&\hskip80pt +\frac A2\Big|\int_{\TT^d}(2Y_N\Ups_N+\Ups_N^2)\, \rd x\Big|^\gamma+\frac12\int_0^1\norm{\dot\Ups^N(t)}{H^\alpha}^2\, \rd t\Big]-C \\
		&\ge \inf_{\dot\Ups^N\in\HH_{\rm a}^\alpha}\EE\Bigl[-C_1|\sigma|(\delta'+C(\delta''))\norm{\Ups_N}{L^2}^\frac{12\alpha-2d}{4\alpha-d}-C_1|\sigma|(\delta'+\delta'')\norm{\Ups_N}{H^\alpha}^2-\delta'\norm{\Ups_N}{L^2}^{2\gamma} \\ 
		&\hskip80pt+\frac A2\Big|\int_{\TT^d}(2Y_N\Ups_N+\Ups_N^2)\, \rd x\Big|^\gamma+\frac12\int_0^1\norm{\dot\Ups^N(t)}{H^\alpha}^2\, \rd t\Big]-C.
	\end{aligned} \end{equation}
	In the regime $d<3\alpha$, we have $\frac{12\alpha-2d}{4\alpha-d}<2\gamma$, and so $\norm{\Ups_N}{L^2}^\frac{12\alpha-2d}{4\alpha-d}\le \delta'\norm{\Ups_N}{L^2}^{2\gamma}+C_{\delta'}$ for any $\delta'>0$: with this in mind and using Lemma~\ref{phi33heart}, the final quantity in display \eqref{ZNb-dbd} leads to
	\begin{equation} \label{ZNb-dbd1} \begin{aligned}
		-&\log\cZ_N \\
		&\ge \inf_{\dot\Ups^N\in\HH_{\rm a}^\alpha}\EE\Bigl[\Bigl(\frac A2-C_2|\sigma|(\delta'+C(\delta''))\delta'\Bigr)\Big|\int_{\TT^d}(2Y_N\Ups_N+\Ups_N^2)\, \rd x\Big|^\gamma \\
		&\hskip50pt-C_2|\sigma|((\delta'+C(\delta''))\delta'+(\delta'+\delta''))\norm{\Ups_N}{H^\alpha}^2+\frac12\int_0^1\norm{\dot\Ups^N(t)}{H^\alpha}^2\, \rd t\Big]-C;
	\end{aligned} \end{equation}
	first picking $\delta''$ based on $\sigma$, and then $\delta'$ based on $C(\delta''),\sigma$ allows us to conclude. On the other hand when $d=3\alpha$, we have $\frac{12\alpha-2d}{4\alpha-d}=2\gamma$, and so we bound the final quantity in \eqref{ZNb-dbd} like
	\begin{equation} \label{ZNb-dbd2} \begin{aligned} 
		-&\log\cZ_N \\
		&\ge \inf_{\dot\Ups^N\in\HH_{\rm a}^\alpha}\EE\Bigl[\Bigl(\frac A2-C_2|\sigma|(\delta'+C(\delta''))\Bigr)\Big|\int_{\TT^d}(2Y_N\Ups_N+\Ups_N^2)\, \rd x\Big|^\gamma \\
		&\hskip80pt-C_2|\sigma|(2\delta'+C(\delta'')+\delta'')\norm{\Ups_N}{H^\alpha}^2+\frac12\int_0^1\norm{\dot\Ups^N(t)}{H^\alpha}^2\, \rd t\Big]-C;
	\end{aligned} \end{equation}
	Note, here, that we are led to no choice but requiring $|\sigma|$ sufficiently small to achieve the required exponential integrability. This completes the proofs of (iii) and (iv). Now, we prove \eqref{uniexpintiii}. The case $d < 3\alpha$ follows from the above, so we assume $d = 3\alpha$. By the Bou\'e-Dupuis formula and our change-of-variable, we have
	\begin{align*}
		-\log\cZ_{N,\delta} = \inf_{\dot\Ups_N\in\HH_{\rm a}^\alpha}\EE\Bigl[\delta\norm{Y_N+\Theta_N}{\cA}^q&-\sigma\int_{\TT^d}Y_N\Theta_N^2\, \rd x-\frac\sigma3\int_{\TT^d}\Theta_N^3\, \rd x \\
		&+A\Big|\int_{\TT^d}\wick{(Y_N+\Theta_N)^2}\, \rd x\Big|^\gamma \\
		&+\frac12\int_0^1\norm{\dot\Ups^N(t)}{H^\alpha}^2\, \rd t \\
		&+\Bigl(\beta_N-\int_0^1\norm{\dot\fZ_N(t)}{H^\alpha}^2\, \rd t\Bigr)\Big];
	\end{align*}
	We proceed as before. If $\gamma>\frac{d-2\alpha}{d}$, first use the estimate
	\begin{equation*}
		A\Big|\int_{\TT^d}\wick{(Y_N+\Theta_N)^2}\, \rd x\Big|^\gamma \ge C_0\Big|\int_{\TT^d}\wick{(Y_N+\Theta_N)^2}\, \rd x\Big|^\frac{d-2\alpha}{d}-C_1(A,C_0)
	\end{equation*}
	for any $0<C_0<1$. By Lemma~\ref{phi33heart}, there exists a constant $C>0$ such that, for any $\delta'>0$, we have
	\begin{equation*}
		\delta'C\Big|\int_{\TT^d} (2Y_N\Ups_N+\Ups_N^2)\, \rd x\Big|^\gamma \ge \delta'\norm{\Ups_N}{L^2}^{2\gamma}-\delta'C\norm{\Ups_N}{H^\alpha}^{\frac{2d-4\alpha}{d}\gamma}-\delta'CB.
	\end{equation*}
	Now, first using \eqref{tamingtermbd} of Lemma~\ref{b-dcrossterms} with $\delta''>0$, and then using the above observation (add the right-hand side and subtract the left-hand side to obtain a lower bound), it follows that, as long as $\delta'C\le \min\{\frac A4,\frac14\}$ and $\delta''<\delta'$, we have
	\begin{align*}
		\EE&\Bigl[A\Big|\int_{\TT^d}\wick{(Y_N+\Theta_N)^2}\, \rd x\Big|^\gamma\Big] \\
		&\ge \EE\Bigl[\Bigl(\frac{A}{2}-\delta'C\Bigr)\Big|\int_{\TT^d}(2Y_N\Ups_N+\Ups_N^2)\, \rd x\Big|^\gamma+(\delta'-\delta'')\norm{\Ups_N}{L^2}^{2\gamma}-\delta'C\norm{\Ups_N}{H^\alpha}^{\frac{2d-4\alpha}{d}\gamma} \\
		&\hskip 80pt -\delta'CB-C(\sigma,\delta'')\Bigl(\Big|\int_{\TT^d}\wick{Y_N^2}\, \rd x\Big|^\gamma+\norm{Y_N}{\cC^{\alpha-\frac{d}{2}-\eps}}^{2\gamma}+\norm{\fZ}{\cC^{4\alpha-d-\eps}}^c\Bigr)\Big] \\
		&\ge \EE[C_1\norm{\Ups_N}{L^2}^{2\gamma}-C_2\norm{\Ups_N}{H^\alpha}^2]-C'
	\end{align*}
	for some constants $C_1>0$, $0<C_2\le \frac14$, and $C'>0$. (We used also Lemma~\ref{pathwreg} to control various stochastic terms.) With \eqref{YNTN2bd} of Lemma~\ref{b-dcrossterms} to control the term $\int_{\TT^d}Y_N\Theta_N^2\, \rd x$, there exists some constant $C_3>0$ such that
	\begin{align*}
		-\log\cZ_{N,\delta} &\ge \inf_{\dot\Ups^N\in\HH_{\rm a}^\alpha}\EE\Bigl[\delta\norm{Y_N+\Ups_N+\sigma\fZ_N}{\cA}^q-\frac\sigma3\int_{\TT^d}(\Ups_N+\sigma\fZ_N)^3\, \rd x \\ 
		&\hskip 150pt+C_3\norm{\Ups_N}{L^2}^{2\gamma}+C_3\norm{\Ups_N}{H^\alpha}^2\Big]-C'
	\end{align*}
	(Above, $C'>0$ has been relabelled.) Next, by Young's inequality,
	\begin{equation*}
		\delta\norm{Y_N+\Ups_N+\sigma\fZ_N}{\cA}^q \ge \frac\delta2\norm{\Ups_N}{\cA}^q-C''(\norm{Y_N}{\cA}^q+\sigma^q\norm{\fZ_N}{\cA}^q)
	\end{equation*}
	and we can estimate, using the Schauder estimate \eqref{schaudereq} and Young's convolution inequality, that
	\begin{align*}
		&\norm{Y_N}{\cA} \lesssim \sup_{0<t\le 1} t^st^{\frac\alpha2-\frac d4-\eps}\norm{Y_N}{W^{\alpha-\frac d2-\eps,3}}, \\
		&\norm{\fZ_N}{\cA} \lesssim \Bigl(\sup_{0<t\le 1}t^s\norm{p_t}{L^1}\Bigr)\norm{\fZ_N}{\cC^{4\alpha-d-\eps}};
	\end{align*}
	assuming $s>-\frac\alpha2+\frac d4$ and $d<4\alpha$, there is a choice of $\eps>0$ for which the above are finite and bounded uniformly in $N$. Moreover using H\"older and Young's inequalities, there exists an exponent $c\ge 1$ and a constant $C(\sigma)>0$ such that
	\begin{align*}
		\Big|\sigma^2\int_{\TT^d}\Ups_N^2\fZ_N\, \rd x\Big|+\Big|\sigma^3\int_{\TT^d}\Ups_N\fZ_N^2\, \rd x\Big|&\le |\sigma|^2\norm{\Ups_N}{L^2}^2\norm{\fZ_N}{L^\infty}+|\sigma|^3\norm{\Ups_N}{L^2}\norm{\fZ_N}{L^4}^2 \\
		&\le \frac{C_3}{2}\norm{\Ups_N}{L^2}^{2\gamma}+\norm{\fZ_N}{\cC^{4\alpha-d-\eps}}^c+C(\sigma).
	\end{align*}
	Here we required $2<\frac{d}{d-2\alpha}$ (i.e. $d<4\alpha$). Combining the last four displays yields, after relabelling $C'>0$,
	\begin{equation*}
		-\log\cZ_{N,\delta} \ge \inf_{\dot\Ups^N\in\HH_{\rm a}^\alpha}\EE\Bigl[\frac\delta2\norm{\Ups_N}{\cA}^q-\frac{|\sigma|}{3}\norm{\Ups_N}{L^3}^3+\frac{C_3}{2}\norm{\Ups_N}{L^2}^{2\gamma}+C_3\norm{\Ups_N}{H^\alpha}^2\Big]-C'.
	\end{equation*}
	Using Young's inequality and a Sobolev embedding, note that
	\begin{equation*}
		\norm{\Ups_N}{L^3}^3\lesssim t^{-3s}\norm{\Ups_N}{\cA}^3+\norm{\Ups_N-p_t\ast\Ups_N}{H^{\frac d6}}^3;
	\end{equation*}
	a mean-value theorem argument provides the estimate $|1-\re^{-t|n|^2}|\lesssim (t|n|^2)^\eta$ ($n\in\ZZ^d$) for any $0\le\eta\le 1$, so that
	\begin{align*}
		\norm{\Ups_N-p_t\ast\Ups_N}{H^{\frac d6}} &= \Bigl(\sum_{n\in\ZZ^d} \ja{n}^{\frac d3}|1-\re^{-t|n|^2}|^2|\hat\Ups_N(n)|^2\Bigr)^{\frac12} \\
		&\lesssim t^\eta\norm{\Ups_N}{H^{\frac{d}{6}+2\eta}}.
	\end{align*}
	Next, we will need to assume $\frac d6+2\eta\le \alpha$. It follows after an application of \eqref{immediatebesovembeddings} that there exists some $C_4>0$ such that
	\begin{equation*}
		\frac{|\sigma|}{3}\norm{\Ups_N}{L^3}^3 \le \frac{C_4|\sigma|}{3}t^{-3s}\norm{\Ups_N}{\cA}^3+\frac{C_4|\sigma|}{3}t^{3\eta}\norm{\Ups_N}{H^\alpha}^3
	\end{equation*}
	and so, choosing (randomly) $t=\frac{1}{1+\frac{4C_4|\sigma|}{3C_3}\norm{\Ups_N}{H^\alpha}}$, picking $s<2\eta$, and using Young's inequality, we find that there exists a constant $C(\sigma,\delta)>0$ such that
	\begin{align*}
		\frac{|\sigma|}{3}\norm{\Ups_N}{L^3}^3 &\le \frac{C_4|\sigma|}{3}\Bigl(1+\frac{4C_4|\sigma|}{3C_3}\norm{\Ups_N}{H^\alpha}\Bigr)^{\frac s\eta}\norm{\Ups_N}{\cA}^3+\frac{C_3}{4}\norm{\Ups_N}{H^\alpha}^2 \\
		&\le \frac\delta4\norm{\Ups_N}{\cA}^q+\frac{C_3}{2}\norm{\Ups_N}{H^\alpha}^2+C(\sigma,\delta)
	\end{align*}
	for some suitably large choice of $q$, depending on $d$, $\sigma$, and $\alpha$. Observe that overall we need $0<-\frac\alpha2+\frac d4<s<2\eta\le\alpha-\frac d6$. This completes the proof of (v) and therefore that of the proposition.
\end{proof}
\begin{rmk}
	It follows from the proof of Proposition~\ref{uniexpint} that we can pick $s=\alpha-\frac d6-\eps$. Hereon assume this to be our choice of $s$ (with $\eps$ sufficiently small to close arguments), and therefore this determines $\cA$. Using the Schauder estimate \eqref{schaudereq}, we have
\begin{align}
	\norm{u}{\cA} &=\sup_{0<t\le 1} t^s\norm{p_t\ast u}{L^3} \notag \\
	&\lesssim \sup_{0<t\le 1}t^{s-\frac{s'}2}\norm{u}{W^{-s',3}} \notag \\
	&\lesssim \norm{u}{W^{-s',3}}, \label{embedAintosobolev}
\end{align}
	as long as $s'<2s$, i.e. as long as $s'<\alpha-2\eps$. In particular we observe that $W^{\alpha-\frac d2-\eps,3}\hookrightarrow W^{-\alpha+\eps,3}\hookrightarrow\cA$, so that $\cA\supseteq{\rm supp}\,\mu$.
\end{rmk}

In the regime $d<3\alpha$, the construction of $\varrho$ does not require any additional renormalisation as described in Subsection~\ref{c-o-v}, and so Proposition~\ref{uniexpint} is sufficient to prove the strong convergence claimed in Theorem~\ref{main} (i), (ii), (iii). Naturally, the limit is either the measure in \eqref{Gibbs very regular} or \eqref{Gibbs regular}, depending on whether or not we require a Wick renormalisation. Assuming, for example, that $2\alpha\le d<3\alpha$, it suffices to prove that
\begin{equation} \label{uniexpint and dct}
	\lim_{N\to\infty} \int_{\cD'} |\exp(-V_N(u))-\exp(-V(u))|\, \mu(\rd u) = 0
\end{equation}
in order to obtain $\cZ_N \to \cZ$ and $\varrho_N \to \varrho$ in total variation. But \eqref{uniexpint and dct} is a consequence of dominated convergence together with Proposition~\ref{uniexpint}. Equally, the above applies to the reference measure $\vartheta_\delta$. \vspace{\baselineskip}

To complete the construction of $\varrho$ and $\vartheta_\delta$ in the regime $d = 3\alpha$, we proceed as in \cite{OOT24} and in Proposition~\ref{tightness} prove tightness of $\{\varrho_N:N\in\NN\}$ (resp. $\{\theta_{N,\delta}:N\in\NN\}$). Together with Proposition~\ref{uniexpint} and Prokhorov's theorem, this implies that any subsequence of $(\varrho_N)$ (resp. $(\vartheta_{N,\delta})$) has a weakly convergent subsequence. We complete the construction of $\varrho$ (resp. $\theta_\delta$) with Proposition~\ref{uniqueness}, which proves that subsequential limits are unique. To obtain a reference measure $\varrho^\delta$, we prove in Proposition~\ref{asfiniteness} that $\delta\norm{u}{\cA}$ is $\theta_\delta$-a.s. finite. Throughout the rest of the subsection, $d = 3\alpha$.

\begin{prop}[Tightness] \label{tightness}
	As in the set-up of Proposition~\ref{uniexpint} (iv) and (v), we have the following.
	\begin{enumerate}[label=(\roman*)]
		\item The family $\{\varrho_N:N\in\NN\}$ is tight on $\cC^{\alpha-\frac d2-\eps}$.
		\item For any $\delta>0$, the family $\{\vartheta_{N,\delta}:N\in\NN\}$ is tight on $\cC^{\alpha-\frac d2-\eps}$.
	\end{enumerate}
\end{prop}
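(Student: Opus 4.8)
The plan is to upgrade the uniform exponential integrability of Proposition~\ref{uniexpint} to a uniform exponential \emph{moment} bound in a slightly more regular H\"older--Besov space. Concretely, I would fix some $\eps'\in(0,\eps)$ and aim to prove that there is $\lambda>0$ with
\begin{equation*}
	\sup_{N\in\NN}\,\EE_{\varrho_N}\bigl[\exp\bigl(\lambda\norm{u}{\cC^{\alpha-\frac d2-\eps'}}\bigr)\bigr]<\infty,
\end{equation*}
and analogously for $\vartheta_{N,\delta}$. Since, by Lemma~\ref{besovest}~(b), the closed ball $\{\norm v{\cC^{\alpha-\frac d2-\eps'}}\le R\}$ has compact closure in $\cC^{\alpha-\frac d2-\eps}$, Chebyshev's inequality then produces, for each $\eta>0$, a radius $R(\eta)$ with $\varrho_N(\{\norm u{\cC^{\alpha-\frac d2-\eps'}}>R(\eta)\})\le\eta$ uniformly in $N$, which is exactly the asserted tightness on $\cC^{\alpha-\frac d2-\eps}$.

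To obtain the exponential moment bound, I would first observe that under $\varrho_N$ the low modes $\pi_N u$ and the high modes $(1-\pi_N)u$ are independent, with $(1-\pi_N)u$ having the same law as under $\mu$ (the density $\re^{-V_N}$ depends on $\pi_N u$ only); hence, by the triangle inequality and Fernique's theorem applied to the Gaussian $(1-\pi_N)Y$ — whose norm in $\cC^{\alpha-\frac d2-\eps'}$ tends to $0$ in $L^2(\Omega)$ as $N\to\infty$ by Lemma~\ref{pathwreg}~(i), so that its exponential moments are controlled uniformly in $N$ — it suffices to bound $\sup_N\EE_{\varrho_N}[\exp(\lambda\norm{\pi_N u}{\cC^{\alpha-\frac d2-\eps'}})]$. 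Writing $G(v)=\norm v{\cC^{\alpha-\frac d2-\eps'}}$, this quantity equals $\cZ_N^{-1}\EE_\mu[\exp(-(V_N-\lambda G)(Y_N))]$, so I would apply the Bou\'e--Dupuis formula (Lemma~\ref{b-d}) to the functional $F=V_N-\lambda G$, together with the change of variables of Subsection~\ref{c-o-v}, which passes to the drift $\dot\Ups^N$ and yields $\Theta_N=\Ups_N+\sigma\fZ_N$. The new term is harmless: by the triangle inequality,
\begin{equation*}
	G(Y_N+\Theta_N)\lesssim\norm{Y_N}{\cC^{\alpha-\frac d2-\eps'}}+\norm{\Ups_N}{H^\alpha}+|\sigma|\norm{\fZ_N}{\cC^{4\alpha-d-\eps}}
\end{equation*}
(using $H^\alpha\hookrightarrow\cC^{\alpha-\frac d2-\eps'}$ and, at $d=3\alpha$, $\cC^{4\alpha-d-\eps}=\cC^{\alpha-\eps}\hookrightarrow\cC^{\alpha-\frac d2-\eps'}$); for $\lambda$ small, $\lambda\norm{\Ups_N}{H^\alpha}\le\frac14\norm{\Ups_N}{H^\alpha}^2+C_\lambda$ by Young's inequality, and $\lambda\norm{Y_N}{\cC^{\alpha-\frac d2-\eps'}}$ and $\lambda|\sigma|\norm{\fZ_N}{\cC^{4\alpha-d-\eps}}$ have expectation bounded uniformly in $N$ by Lemma~\ref{pathwreg}.

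With these reductions, the lower bound for $-\log\EE_\mu[\exp(-(V_N-\lambda G)(Y_N))]$ comes from re-running the proof of Proposition~\ref{uniexpint}~(iv) verbatim, carrying the extra $-\lambda G(Y_N+\Theta_N)$ along and absorbing it as above: the variational functional remains bounded below by a positive multiple of $\EE\norm{\Ups_N}{H^\alpha}^2$ (plus the nonnegative $\gamma$-taming term) minus an $N$-independent constant. Together with the fact that $\cZ_N$ is bounded below away from $0$ uniformly in $N$ — which follows from the variational \emph{upper} bound in Lemma~\ref{b-d} evaluated at the drift $\dot\Ups^N\equiv0$ (so $\Theta_N=\sigma\fZ_N$), using the choice of $\beta_N$ and Lemma~\ref{pathwreg} — this yields the desired exponential moment bound and hence part~(i). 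For part~(ii) I would repeat the argument with $W_{N,\delta}$ in place of $V_N$: the additional term $\delta\norm{u_N}{\cA}^q\ge0$ only strengthens the lower bound, and one invokes the proof of Proposition~\ref{uniexpint}~(v) (which in addition controls $\delta\norm{\Ups_N}{\cA}^q$) instead of~(iv); the constants may now depend on the fixed parameter $\delta$.

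The main obstacle here is not analytic but organisational: one has to verify that all constants produced when re-running the estimates of Proposition~\ref{uniexpint} with the perturbation $-\lambda G$ (respectively with $W_{N,\delta}$) are genuinely independent of $N$, and one has to check the hypotheses of Lemma~\ref{b-d} for the modified functional $F=V_N-\lambda G$ — the only point being that $\re^{-F}$ stays integrable, which holds because, for fixed $N$, the $\gamma$-taming of $\int_{\TT^d}\wick{u_N^2}$ dominates both $\int_{\TT^d}\wick{u_N^3}$ and the (sub-Gaussian) quantity $\lambda G$. No estimate beyond Proposition~\ref{uniexpint} and Lemma~\ref{pathwreg} is required.
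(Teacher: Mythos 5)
Your proposal is correct and in the same family as the paper's proof, but it takes a noticeably different route. The paper proves tightness ``softly'': it first establishes $\inf_N \cZ_N > 0$ by choosing $\dot\Ups^N = 0$ in the variational formula, then for each $R$ picks a \emph{bounded} smooth cutoff $F(u) = \psi(\norm{u}{H^{\alpha-\frac d2-\eps}})$ (with $\psi$ decreasing from a large level $M$ to $0$ on $[R/2,R]$) and bounds $\varrho_N(B_R^c)$ by $\cZ_N^{-1}\int\exp(-F(u_N)-V_N(u))\,\mu(\rd u)$, to which Bou\'e--Dupuis is applied. Since $F$ is nonnegative and bounded, the hypotheses of Lemma~\ref{b-d} hold automatically once they hold for $V_N$, and the extra contribution $\EE F(Y_N+\Ups_N+\sigma\fZ_N)\ge \frac M2 - \frac14\EE\norm{\Ups_N}{H^\alpha}^2$ is obtained from a soft Chebyshev-type probability bound; after plugging into the estimates of Proposition~\ref{uniexpint} one gets $-\log\int\exp(-F(u_N)-V_N(u))\,\mu(\rd u) \ge \frac M4$ uniformly in $N$. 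You instead aim for a uniform exponential moment bound $\sup_N\EE_{\varrho_N}[\exp(\lambda\norm{u}{\cC^{\alpha-\frac d2-\eps'}})]<\infty$, from which tightness follows by Chebyshev and compactness of the embedding. That is a \emph{stronger} statement than what tightness requires, and it carries two extra (but manageable) burdens: (1) you must split $u$ into $\pi_N u$ and $(1-\pi_N)u$, because Lemma~\ref{b-d} only handles functionals of $Y_N$ — the paper sidesteps this because its cutoff $F$ is only ever evaluated on $u_N$ — and you then need Fernique's theorem for the (independent) high modes; (2) since your perturbation $-\lambda G$ is unbounded, you do need to verify the integrability hypotheses of Lemma~\ref{b-d} for $F = V_N - \lambda G$, which you acknowledge but which the paper's bounded-cutoff route avoids entirely. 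Conversely, your approach buys a quantitative moment estimate that could be re-used (e.g.\ for continuity of test-function integrals), whereas the paper's soft cutoff gives only tightness. One small point to be careful about: in the absorption step, the coefficient you can spend on $\norm{\Ups_N}{H^\alpha}^2$ is not $\frac14$ but whatever small positive coefficient survives after the estimates of Proposition~\ref{uniexpint}~(iv) (something like $\frac1{40}$ there); the Young's inequality $\lambda a\le c\,a^2 + \frac{\lambda^2}{4c}$ works for any $c>0$, so this is only a matter of bookkeeping, but the constant in the final exponential moment bound is then necessarily $\lambda$- and $c$-dependent. Both your argument and the paper's require $\inf_N\cZ_N>0$, which must be proved first as the paper does.
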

\begin{proof}
	We first prove (i). First we show that $\inf_N\cZ_N>0$. Using a Bou\'e-Dupuis approach, it suffices to use the embeddings
	\begin{align*}
		&\Big|\int_{\TT^d}\wick{Y_N^2}\, \rd x\Big| \lesssim \norm{\,\wick{Y_N^2}\,}{\cC^{2\alpha-d-\eps}} \\
		&\Big|\int_{\TT^d} Y_N\Theta_N\, \rd x\Big| \lesssim \norm{Y_N}{\cC^{\alpha-\frac d2-\eps}}\norm{\Theta}{H^{-\alpha+\frac d2+\eps}} \\
		&\hskip65pt\lesssim 1+\norm{Y_N}{\cC^{\alpha-\frac d2-\eps}}^2+\norm{\Ups_N}{H^\alpha}^2+\norm{\fZ_N}{\cC^{4\alpha-d-\eps}}^2 \\
		&\Big|\int_{\TT^d} \Theta_N^2\, \rd x\Big| \lesssim \norm{\Ups_N}{L^2}^2+\norm{\fZ_N}{\cC^{4\alpha-d-\eps}}^2
	\end{align*}
	to obtain
	\begin{align*}
		-&\log\cZ_N \\
		& \lesssim \inf_{\dot\Ups^N\in\HH_{\rm a}^\alpha}\EE\Bigl[1+\norm{\,\wick{Y_N^2}\,}{\cC^{2\alpha-d-\eps}}^\gamma+\norm{Y_N}{\cC^{\alpha-\frac d2-\eps}}^{2\gamma}+\norm{\Ups_N}{H^\alpha}^{2\gamma}+\norm{\Ups_N}{L^2}^{2\gamma}+\norm{\fZ_N}{\cC^{4\alpha-d-\eps}}^{2\gamma}\Big]
	\end{align*}
	which is enough after picking, for example, $\dot\Ups^N=0$ in the infimum. We proceed. For $\eps>0$, let $B_R\subseteq \cC^{\alpha-\frac d2-\frac\eps 2}$ be the closed ball of radius $R$ centred at the origin. The embedding $\cC^{\alpha-\frac d2-\eps}\Subset \cC^{\alpha-\frac d2-\frac\eps2}$ is compact, so $B_R$ is a compact subset of $\cC^{\alpha-\frac d2-\eps}$. We will show that, given any $\delta>0$, there exists some $R$ such that
	\begin{equation*}
		\sup_{N\in\NN} \varrho_N(B_R^c)<\delta
	\end{equation*}
	Given $M\gg 1$, let $\psi:[0,\infty)\to[0,M]$ be smooth and decreasing such that
	\begin{equation*}
		\psi(t) =
		\begin{cases} 
			M, & \text{if}\ t\le \frac R2, \\ 
			0, & \text{if}\ t>R, 
		\end{cases}
	\end{equation*}
	and define $F:\cD'\to[0,M]$ by $F(u)=\psi(\norm{u}{H^{\alpha-\frac d2-\eps}})$. Since $\inf_N\cZ_N>0$, we have
	\begin{align*}
		\rho_N(B_R^c) &\le \cZ_N^{-1}\int_{\cD'} \exp(-F(u)-V_N(u))\, \mu(\rd u) \\
		&\lesssim \int_{\cD'} \exp(-F(u_N)-V_N(u))\, \mu(\rd u).
	\end{align*}
	By the Bou\'e-Dupuis formula,
	\begin{align*}
		-\log\int_{\cD'}&\exp(-F(u_N)-V_N(u))\, \mu(\rd u) \\
		&=\inf_{\dot\Ups^N\in\HH_{\rm a}^\alpha} \EE\Bigl[F(Y_N+\Theta_N)-\sigma\int_{\TT^d}Y_N\Theta_N^2\, \rd x-\frac\sigma3\int \Theta_N^3\, \rd x \\
		&\hskip100pt+A\Big|\int_{\TT^d}\wick{(Y_N+\Theta_N)^2}\, \rd x\Big|^\gamma+\frac12\int_0^1\norm{\dot\Ups^N(t)}{H^\alpha}^2\, \rd t \\
		&\hskip100pt+\Bigl(\beta_N-\int_0^1\norm{\dot\fZ_N(t)}{H^\alpha}^2\, \rd t\Bigr)\Big]
	\end{align*}
	Now, using Lemma~\ref{pathwreg}, we have
	\begin{align*}
		\PP&\Bigl(\norm{Y_N+\Ups_N+\sigma\fZ_N}{H^{\alpha-\frac d2-\eps}}>\frac R2\Bigr) \\
		&\le \PP\Bigl(\norm{Y_N+\sigma\fZ_N}{H^{\alpha-\frac d2-\eps}}>\frac R4\Bigr)+\PP\Bigl(\norm{\Ups_N}{H^{\alpha-\frac d2-\eps}}>\frac R4\Bigr) \\
		&\le \frac12+\frac{16C}{R^2}\EE\norm{\Ups_N}{H^\alpha}^2
	\end{align*}
	where we obtained the second line by taking $R$ large enough to bound the first probability, and by using Chebyshev's inequality to bound the second. In particular,
	\begin{align*}
		\EE F(Y_N+\Ups_N+\sigma\fZ_N) &\ge \frac M2-\frac{16CM}{R^2}\EE\norm{\Ups_N}{H^\alpha}^2 \\
		&\ge \frac M2-\frac14\EE\norm{\Ups_N}{H^\alpha}^2
	\end{align*}
	after choosing $M=\frac {R^2}{64C}$ above. Arguing as follows \eqref{ZNb-dbd1} or \eqref{ZNb-dbd2} where necessary with the above and $R\gg1$, we have, uniformly in $N$,
	\begin{equation*}
		-\log\int_{\cD'}\exp(-F(u_N)-V_N(u))\, \mu(\rd u)\ge \frac M4,
	\end{equation*}
	from which the desired conclusion follows. For (ii), a similar argument using also the embeddings $W^{\alpha-\frac d2-\eps,\infty}\hookrightarrow\cA$ and $\cC^{4\alpha-d-\eps}\hookrightarrow\cA$ yields $\inf_N\cZ_{N,\delta}>0$. Arguing as before and following the proof of Proposition~\ref{uniexpint} (iii) furnishes the rest of the argument.
\end{proof}

By Propositions \ref{uniexpint} and \ref{tightness} and Prokhorov's theorem, any subsequence of $(\varrho_N)$ or $(\vartheta_{N,\delta})$ has a convergent further subsequence. By proving that subsequential limits are unique, we establish that the overall sequences $(\varrho_N)$ and $(\vartheta_{N,\delta})$ have weak limits. This is done below.

\begin{prop}[Uniqueness of weak limits] \label{uniqueness}
	As in the set-up of Proposition~\ref{uniexpint} (iv) and (v), we have the following.
	\begin{enumerate}[label=(\roman*)]
		\item Suppose that subsequences $(\varrho_{N_k^1})_{k\in\NN}$ and $(\varrho_{N_k^2})_{k\in\NN}$ of $(\varrho_N)_{N\in\NN}$ converge weakly (as measures on $\cC^{\alpha-\frac d2-\eps}$) to $\varrho^1$ and $\varrho^2$, respectively. Then $\varrho^1=\varrho^2$.
		\item There exists a choice of $s$ such that, for any $\delta>0$, the following is true. Suppose that subsequences $(\vartheta_{N_k^1,\delta})_{k\in\NN}$ and $(\vartheta_{N_k^2,\delta})_{k\in\NN}$ of $(\vartheta_{N,\delta})_{N\in\NN}$ converge weakly (as measures on $\cC^{\alpha-\frac d2-\eps}$ to $\vartheta^1_\delta$) and $\vartheta^2_\delta$, respectively. Then $\vartheta^1_\delta=\vartheta^2_\delta$.
	\end{enumerate}
\end{prop}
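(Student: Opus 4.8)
The plan is to reduce the uniqueness claim to a single analytic statement and then establish that statement by a Cauchy argument at the level of the Boué--Dupuis variational functionals. On $\cC^{\alpha-\frac d2-\eps}$ weak convergence of probability measures is tested by bounded Lipschitz functionals, and after an affine normalisation even by those $G$ with $1\le G\le2$. For such a $G$, writing $H_G\coloneqq-\log G$ (a bounded Lipschitz functional) and using \eqref{truncatedmeasure},
\begin{equation*}
	-\log\int G\,\rd\varrho_N \;=\; \Bigl(-\log\int_{\cD'}e^{-H_G(u)-V_N(u)}\,\mu(\rd u)\Bigr)-\bigl(-\log\cZ_N\bigr),
\end{equation*}
and the analogous identity holds with $(\varrho_N,V_N)$ replaced by $(\vartheta_{N,\delta},W_{N,\delta})$. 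So it suffices to prove: for every bounded Lipschitz $H$, the sequence $a_N(H)\coloneqq-\log\int_{\cD'}e^{-H(u)-V_N(u)}\,\mu(\rd u)$ converges as $N\to\infty$ (the case $H=0$ being convergence of $-\log\cZ_N$, which is in any case bounded by Proposition~\ref{uniexpint} and by $\inf_N\cZ_N>0$ from the proof of Proposition~\ref{tightness}). Granting this, any two subsequential weak limits assign equal integrals to every such $G$, hence coincide; the same applies to $\vartheta_{N,\delta}$.

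To prove convergence of $a_N(H)$, I would invoke Lemma~\ref{b-d} together with the change of variables of Subsection~\ref{c-o-v} to write $a_N(H)$ as the infimum over $\dot\Ups^N\in\HH_{\rm a}^\alpha$ of
\begin{equation*}
\begin{aligned}
	\EE\Bigl[&H(Y_N+\Ups_N+\sigma\fZ_N)-\sigma\!\int_{\TT^d}\!Y_N(\Ups_N+\sigma\fZ_N)^2\,\rd x-\tfrac\sigma3\!\int_{\TT^d}\!(\Ups_N+\sigma\fZ_N)^3\,\rd x \\
	&\qquad+A\Bigl|\int_{\TT^d}\wick{(Y_N+\Ups_N+\sigma\fZ_N)^2}\,\rd x\Bigr|^\gamma+\tfrac12\int_0^1\norm{\dot\Ups^N(t)}{H^\alpha}^2\,\rd t\Bigr],
\end{aligned}
\end{equation*}
the crucial point being that $\beta_N$ exactly cancels the divergent mean $\tfrac{\sigma^2}{2}\EE\int_0^1\norm{\dot\fZ_N(t)}{H^\alpha}^2\rd t$, so no divergent term survives. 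Since $|H|\le\norm{H}{L^\infty}$, the coercivity estimates behind Proposition~\ref{uniexpint}~(iv) (respectively (v)) go through up to an additive constant: $a_N(H)$ lies in a fixed bounded interval, and any drift $\dot\Ups^N$ whose functional value is within $1$ of $a_N(H)$ obeys the uniform bounds $\EE\norm{\Ups_N}{H^\alpha}^2+\EE\norm{\Ups_N}{L^2}^{2\gamma}\,(+\,\EE\norm{\Ups_N}{\cA}^q)\le C$, with $C$ independent of $N$. Moreover, since the functional depends on $\dot\Ups^N$ only through $\pi_N\dot\Ups^N$ and through $\int_0^1\norm{\dot\Ups^N(t)}{H^\alpha}^2\rd t$, such near-minimisers may be taken supported in frequencies $|n|_\infty\le N$.

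Then I would show $(a_N(H))_N$ is Cauchy. Fix $N<M$, take a near-minimiser $\dot\Ups^N$ for the level-$N$ problem localised in frequencies $\le N$, and use the \emph{same} drift in the level-$M$ problem: since $\pi_M\int_0^1\dot\Ups^N=\int_0^1\dot\Ups^N=\Ups_N$, the only changes to the functional are $Y_N\to Y_M$, $\fZ_N\to\fZ_M$, $\wick{Y_N^2}\to\wick{Y_M^2}$, while the coercive term $\tfrac12\int_0^1\norm{\dot\Ups^N}{H^\alpha}^2$ is untouched; hence $a_M(H)-a_N(H)\le\tfrac1N+(\text{functional value at level }M)-(\text{at level }N)$. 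The last difference is estimated termwise — telescoping in the stochastic inputs and pairing the differences $Y_M-Y_N$, $\fZ_M-\fZ_N$, $\wick{Y_M^2}-\wick{Y_N^2}$, which by Lemma~\ref{pathwreg}~(i) tend to $0$ in $L^r(\Omega)$ for \emph{every} $r\ge1$, against polynomial expressions in $\norm{\Ups_N}{L^2},\norm{\Ups_N}{H^\alpha}$ (and $\norm{\Ups_N}{\cA}$) and the stochastic terms, whose expectations are controlled by the a priori bounds, via Hölder together with the estimates of Lemmas~\ref{besovest}, \ref{b-dcrossterms}, \ref{phi33heart}. For the reverse inequality one uses a near-minimiser $\dot\Ups^M$ at level $M$ inside the level-$N$ problem; now $\pi_N\int_0^1\dot\Ups^M=\pi_N\Ups_M$, so one also incurs the high-frequency truncation error $(1-\pi_N)\Ups_M$, which is harmless since $\norm{(1-\pi_N)\Ups_M}{H^s}\le N^{-(\alpha-s)}\norm{\Ups_M}{H^\alpha}$ for $s<\alpha$ and hence $\to0$ in $L^2(\Omega)$ after combining with $\EE\norm{\Ups_M}{H^\alpha}^2\le C$. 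Combining, $|a_M(H)-a_N(H)|\to0$, so $(a_N(H))_N$ converges. The argument for $\vartheta_{N,\delta}$ is identical, additionally carrying the term $\delta\norm{Y_N+\Ups_N+\sigma\fZ_N}{\cA}^q$, whose variation under the above modifications is treated the same way using \eqref{embedAintosobolev}, $\norm{\,\cdot\,}{\cA}\lesssim\norm{\,\cdot\,}{W^{-s',3}}$ and $\EE\norm{\Ups_N}{\cA}^q\le C$; this pins down $s$, namely the choice of Proposition~\ref{uniexpint}~(v).

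The hard part will be the termwise comparison of the two functionals in the critical regime $d=3\alpha$. There the exponents appearing in Lemmas~\ref{b-dcrossterms} and \ref{phi33heart} coincide (for instance $2\gamma=\tfrac{12\alpha-2d}{4\alpha-d}=6$ and $\tfrac{(2d-4\alpha)\gamma}{d}=2$), so the a priori bounds $\EE\norm{\Ups_N}{L^2}^{2\gamma},\,\EE\norm{\Ups_N}{H^\alpha}^2\le C$ leave no margin in the powers of $\Ups_N$; the comparison closes only because the stochastic differences decay in $L^r(\Omega)$ for \emph{all} $r$, which, via Hölder with an exponent on $\Ups_N$ strictly below — or, at the borderline, equal to (with a Jensen step) — the critical one, and a correspondingly large exponent on the stochastic difference, supplies exactly the slack required. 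Keeping this bookkeeping so that no exponent on $\Ups_N$ is ever pushed past what the coercivity controls is the crux of the matter.
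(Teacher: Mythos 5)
Your plan matches the paper's strategy at its core: both pass to the Bou\'e--Dupuis representation with the $\fZ_N$ change of variable so the divergent constant cancels against $\beta_N$, take a near-minimiser at one truncation level with frequency support in $\{|n|_\infty\le N\}$, reuse it as a competitor at another level, and estimate the resulting difference termwise by pairing the stochastic telescopes $Y_{N'}-Y_N$, $\fZ_{N'}-\fZ_N$, $\wick{Y_{N'}^2}-\wick{Y_N^2}$ (which decay in every $L^r(\Omega)$ moment by Lemma~\ref{pathwreg}) against polynomial expressions in $\norm{\und\Ups}{L^2}$ and $\norm{\und\Ups}{H^\alpha}$ whose expectations are controlled by the bound $\cE_{N}(\dot{\und\Ups}^N)\lesssim -\log\cZ_{N,\delta}+C$.

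The one genuine divergence is logical rather than analytic, and it is worth flagging. You commit to proving that the \emph{full} sequence $a_N(H)$ is Cauchy, which forces you to compare in \emph{both} directions: when you feed a level-$M$ near-minimiser $\und\Ups_M$ into the level-$N$ problem ($N<M$), the shift becomes $\pi_N\und\Ups_M$, and you must control the truncation error $(1-\pi_N)\und\Ups_M$. The paper sidesteps this entirely: it only proves the one-sided inequality $\lim_k\cZ_{N_k^1,\delta}\ge\lim_k\cZ_{N_k^2,\delta}$ after passing to a further subsequence with $N_k^1\ge N_k^2$, always reusing the near-minimiser from the \emph{smaller} index in the \emph{larger}-index problem (so $\pi_{N_k^1}\und\Ups_{N_k^2}=\und\Ups_{N_k^2}$ exactly), and obtains equality by the symmetry of the labelling of the two subsequences. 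Since Prokhorov already gives compactness, uniqueness of subsequential limits suffices; the full-sequence Cauchy estimate is unnecessary. Your truncation error can in fact be closed --- one pairs $(1-\pi_N)\und\Ups_M$ in a Sobolev norm of index $s<\alpha$ (gaining $N^{-(\alpha-s)}$) against the rest, then Young's inequality splits off a small multiple of $\norm{\und\Ups_M}{H^\alpha}^2$ which is uniformly bounded in expectation --- but at $d=3\alpha$ the exponents in $\norm{\und\Ups_M}{H^\alpha}^2$ and $\norm{\und\Ups_M}{L^2}^{2\gamma}$ sit exactly on the boundary, so the bookkeeping has to be done carefully (the $\eps$-losses in the interpolation have to be absorbed into the $N^{-(\alpha-s)}$ gain rather than pushing the $H^\alpha$ exponent past $2$). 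If you write the argument up, you should either work through that borderline estimate explicitly or, more economically, adopt the paper's one-sided comparison and the relabelling argument, which makes the truncation term vanish identically.

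One further small point: you are right that for (ii) one also needs a uniform bound on the $\cA$-contribution of the near-minimiser. The paper does not bound $\EE\norm{\und\Ups_N}{\cA}^q$ directly; it bounds $\EE\norm{Y_N+\und\Ups_N+\sigma\fZ_N}{\cA}^q$ by $-\log\cZ_{N,\delta}+O(1)$, which is the natural quantity appearing in $W_{N,\delta}$, and separately controls $\norm{Y_N}{\cA}$, $\norm{\fZ_N}{\cA}$ by Lemma~\ref{pathwreg} and the Schauder embedding \eqref{embedAintosobolev}. Your plan to use \eqref{embedAintosobolev} and the a priori bounds is consistent with this.
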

\begin{proof}
	We prove only (ii), as the proof of (i) is similar and easier. As a first step, we will show that
	\begin{equation} \label{uniquenesspart1}
		\lim_{k\to\infty} \cZ_{N_k^1,\delta} \ge \lim_{k\to\infty} \cZ_{N_k^2,\delta};
	\end{equation}
	without loss of generality, this implies the above is true with equality. The desired result will follow from a slight addition to the argument. By taking a further subsequence, assume that $N_k^1\ge N_k^2$ for $k=1,2,\ldots$. Let $\dot{\und\Ups}^{N_k^2}$ (and $\und\Theta_{N_k^2}=\und\Ups_{N_k^2}+\sigma\fZ_{N_k^2}$) be an $\eps$-almost optimiser for the Bou\'e-Dupuis minimisation problem in the sense that
	\begin{equation} \label{undUpsdef} \begin{aligned}
		-\log\cZ_{N_k^2,\delta} &\ge \EE\Bigl[\delta\norm{Y_{N_k^2}+\und\Ups_{N_k^2}+\sigma\fZ_{N_k^2}}{\cA}^q-\sigma\int_{\TT^d}Y_{N_k^2}\und\Theta_{N_k^2}^2\, \rd x-\frac\sigma3\int_{\TT^d}\und\Theta_{N_k^2}^3\, \rd x \\
		&\hskip 40pt+A\Big|\int_{\TT^d}\wick{(Y_{N_k^2}+\und\Theta_{N_k^2})^2}\, \rd x\Big|^\gamma+\frac12\int_0^1\norm{\dot{\und\Ups}^{N_k^2}(t)}{H^\alpha}^2\, \rd t \Big]-\eps
	\end{aligned} \end{equation}
	We now use the Bou\'e-Dupuis formula with $-\log\cZ_{N_k^1,\delta}$, and choose $\dot\Ups^{N_k^1}=\dot{\und\Ups}_{N_k^2}$ in the minimisation problem to obtain an upper bound; since $\pi_{N_k^1}\und\Ups_{N_k^2}=\und\Ups_{N_k^2}$, this reads
	\begin{equation} \label{diffbd} \begin{aligned}
		-\log&\cZ_{N_k^1,\delta}+\log\cZ_{N_k^2,\delta} \\
		&\le \delta\EE[\norm{Y_{N_k^1}+\und\Ups_{N_k^2}+\sigma\fZ_{N_k^1}}{\cA}^q-\norm{Y_{N_k^2}+\und\Ups_{N_k^2}+\sigma\fZ_{N_k^2}}{\cA}^q] \\
		&\hskip 20pt +\EE\Bigl[-\sigma\int_{\TT^d}Y_{N_k^1}(\und\Ups_{N_k^2}+\sigma\fZ_{N_k^1})^2\, \rd x-\frac\sigma3\int_{\TT^d}(\und\Ups_{N_k^2}+\sigma\fZ_{N_k^1})^3\, \rd x \\
		&\hskip 40pt+A\Big|\int_{\TT^d}\wick{(Y_{N_k^1}+\und\Ups_{N_k^2}+\sigma\fZ_{N_k^1})^2}\, \rd x\Big|^\gamma \\ 
		&\hskip 20pt \phantom{+\EE\Bigl[}\,+\sigma\int_{\TT^d}Y_{N_k^2}(\und\Ups_{N_k^2}+\sigma\fZ_{N_k^2})^2\, \rd x+\frac\sigma3\int_{\TT^d}(\und\Ups_{N_k^2}+\sigma\fZ_{N_k^2})^3\, \rd x \\
		&\hskip 40pt-A\Big|\int_{\TT^d}\wick{(Y_{N_k^2}+\und\Ups_{N_k^2}+\sigma\fZ_{N_k^2})^2}\, \rd x\Big|^\gamma\Big]+\eps.
	\end{aligned} \end{equation}
	We first prove that the first expectation appearing aboved tends to $0$ as $k\to\infty$. Using Young's inequality after factoring, there exists some constant $C>0$ so that this expectation is bounded by
	\begin{align*}
		\EE[C(\norm{Y_{N_k^1}+\und\Ups_{N_k^2}+&\sigma\fZ_{N_k^1}}{\cA}-\norm{Y_{N_k^2}+\und\Ups_{N_k^2}+\sigma\fZ_{N_k^2}}{\cA}) \\ 
		&\cdot (\delta\norm{Y_{N_k^1}+\und\Ups_{N_k^2}+\sigma\fZ_{N_k^1}}{\cA}^{q-1}+\delta\norm{Y_{N_k^2}+\und\Ups_{N_k^2}+\sigma\fZ_{N_k^2}}{\cA}^{q-1})].
	\end{align*}
	Next, using the reverse triangle inequality with the first factor and H\"older's inequality in the probability space, we obtain the successive bounds (possibly relabelling $C$ several times)
	\begin{align*}
		\EE&[C(\norm{Y_{N_k^1}-Y_{N_k^2}}{\cA}-|\sigma|\norm{\fZ_{N_k^1}-\fZ_{N_k^2}}{\cA}) \\ 
		&\hskip 60pt \cdot (\delta\norm{Y_{N_k^1}+\und\Ups_{N_k^2}+\sigma\fZ_{N_k^1}}{\cA}^{q-1}+\delta\norm{Y_{N_k^2}+\und\Ups_{N_k^2}+\sigma\fZ_{N_k^2}}{\cA}^{q-1})] \\
		&\le (\EE[C(\norm{Y_{N_k^1}-Y_{N_k^2}}{\cA}^q-|\sigma|^q\norm{\fZ_{N_k^1}-\fZ_{N_k^2}}{\cA}^q])^\frac1q \\ 
		&\hskip 60pt \cdot \Bigl(\EE\Bigl[\frac\delta2\norm{Y_{N_k^1}+\und\Ups_{N_k^2}+\sigma\fZ_{N_k^1}}{\cA}^q\Big]+\EE\Bigl[\frac\delta2\norm{Y_{N_k^2}+\und\Ups_{N_k^2}+\sigma\fZ_{N_k^2}}{\cA}^q\Bigr)\Big]^\frac{q-1}q,
	\end{align*}
	where we used Young's inequality in the second line, shifting all large constants onto $C$. As shown in the proof of Proposition~\ref{uniexpint} via a Schauder estimate and various embeddings, the first factor above decreases to $0$ as $k\to\infty$. To handle the first expectation in \eqref{diffbd}, it is enough, then, to show that the second factor above is bounded uniformly in $k$. To this end, note that
	\begin{equation*}
		\EE\Bigl[\frac\delta2\norm{Y_{N_k^2}+\und\Ups_{N_k^2}+\sigma\fZ_{N_k^2}}{\cA}^q\Big] \le -\log\cZ_{N_k^2,\delta}+\eps
	\end{equation*}
	using the definition \eqref{undUpsdef} of $\und\Ups_{N_k^2}$, and that (relabelling $C$ as necessary)
	\begin{align*}
		\EE\Bigl[\frac\delta2\norm{Y_{N_k^1}+\und\Ups_{N_k^2}+\sigma\fZ_{N_k^1}}{\cA}^q\Big] &\le \EE\Bigl[\frac\delta2\norm{Y_{N_k^2}+\und\Ups_{N_k^2}+\sigma\fZ_{N_k^2}}{\cA}^q\Big] \\ 
		&\hskip50pt+C\EE[\norm{Y_{N_k^1}-Y_{N_k^2}}{\cA}^q+|\sigma|^q\norm{\fZ_{N_k^1}-\fZ_{N_k^2}}{\cA}^q];
	\end{align*}
	in particular, noting that
	\begin{align*}
		-\log\cZ_{N_k^2,\delta} &\le \EE\Bigl[\delta\norm{Y_{N_k^2}+\sigma\fZ_{N_k^2}}{\cA}^q-\sigma^3\int_{\TT^d} Y_{N_k^2}\fZ_{N_k^2}^2\, \rd x-\frac{\sigma^4}{3}\int_{\TT^d}\fZ_{N_k^2}^3\, \rd x \\ 
		&\hskip 50pt +A\Big|\int_{\TT^d}\wick{(Y_{N_k^2}+\sigma\fZ_{N_k^2})^2}\, \rd x\Big|^\gamma\Big]
	\end{align*}
	(by taking $\Ups^{N_k^2}=0$ in the Bou\'e-Dupuis infimum) is enough, since the right-hand side above is bounded above uniformly in $k$. We move on to the second expectation in \eqref{diffbd}. Let
	\begin{equation} \label{b-d+vepartdef}
		\cE_N(\dot\Ups^N) = \EE\Bigl[\frac A2\Big|\int_{\TT^d}(2Y_N\Ups_N+\Ups_N^2)\, \rd x\Big|^\gamma+\frac12\int_0^1\norm{\dot\Ups^N(t)}{H^\alpha}^2\, \rd t\Big]
	\end{equation}
	be the ``positive part'' appearing in the Bou\'e-Dupuis expansion of $-\log\cZ_N$. Then, by Lemmas \ref{pathwreg} and \ref{phi33heart}, we have
	\begin{equation} \label{b-d+vepartbd}
		\EE[\norm{\und\Ups_{N_k^2}}{H^\alpha}^2+\norm{\und\Ups_{N_k^2}}{L^2}^{2\gamma}]\lesssim 1+\cE_{N_k^2}(\dot{\und\Ups}^{N_k^2}).
	\end{equation}
	The contribution to the second expectation in \eqref{diffbd} from the terms $-\sigma\int_{\TT^d}Y_{N_k^j}\und\Theta_{N_k^j}^2\, \rd x$, $j=1,2$, can be written as
	\begin{align*}
		&-\sigma\EE\Bigl[\int_{\TT^d}(Y_{N_k^1}-Y_{N_k^2})\und\Ups_{N_k^2}^2\, \rd x\Bigr]-\sigma^2\EE\Bigl[\int_{\TT^d}(Y_{N_k^1}-Y_{N_k^2})(2\und\Ups_{N_k^2}+\sigma\fZ_{N_k^1})\fZ_{N_k^1}\, \rd x\Bigr] \\
		&\hskip 50pt-\sigma^2\EE\Bigl[\int_{\TT^d}Y_{N_k^2}(\fZ_{N_k^1}-\fZ_{N_k^2})(2\und\Ups_{N_k^2}+\sigma\fZ_{N_k^1}+\sigma\fZ_{N_k^2})\, \rd x\Bigr].
	\end{align*}
	Now, we calculate, using Lemma~\ref{besovest}, H\"older's inequality in the probability space followed by Young's inequality, and \eqref{b-d+vepartbd}, that
	\begin{align*}
		\Big|\EE&\Bigl[\int_{\TT^d}(Y_{N_k^1}-Y_{N_k^2})\und\Ups_{N_k^2}^2\, \rd x\Big]\Big| \\ 
		&\lesssim \EE[\norm{Y_{N_k^1}-Y_{N_k^2}}{\cC^{\alpha-\frac d2-\eps}}\norm{\und\Ups_{N_k^2}}{H^{-\alpha+\frac d2+2\eps}}\norm{\und\Ups_{N_k^2}}{L^2}] \\
		&\lesssim \EE[\norm{Y_{N_k^1}-Y_{N_k^2}}{\cC^{\alpha-\frac d2-\eps}}\norm{\und\Ups_{N_k^2}}{H^\alpha}^{-1+\frac{d}{2\alpha}+\frac{2\eps}{\alpha}}\norm{\und\Ups_{N_k^2}}{L^2}^{3-\frac{d}{2\alpha}-\frac{2\eps}{\alpha}}] \\
		&\lesssim \norm{Y_{N_k^1}-Y_{N_k^2}}{L^c(\PP;\cC^{\alpha-\frac d2-\eps})}(1+\EE\norm{\und\Ups_{N_k^2}}{H^\alpha}^2+\EE\norm{\und\Ups_{N_k^2}}{L^2}^{\frac{d}{d-2\alpha}}) \\
		&\lesssim \norm{Y_{N_k^1}-Y_{N_k^2}}{L^c(\PP;\cC^{\alpha-\frac d2-\eps})}(1+\cE_{N_k^2}(\dot{\und\Ups}^{N_k^2}))
	\end{align*}
	for some large exponent $c>1$, where we used $d<4\alpha$ from the third line to the fourth. Using the same techniques, we have
	\begin{align*}
		\Big|\EE&\Bigl[\int_{\TT^d}(Y_{N_k^1}-Y_{N_k^2})(2\und\Ups_{N_k^2}+\sigma\fZ_{N_k^1})\fZ_{N_k^1}\, \rd x\Big]\Big| \\
		&\lesssim \EE[\norm{Y_{N_k^1}-Y_{N_k^2}}{\cC^{\alpha-\frac d2-\eps}}\norm{(2\und\Ups_{N_k^2}+\sigma\fZ_{N_k^1})\fZ_{N_k^1}}{H^{-\alpha+\frac d2+2\eps}}] \\
		&\lesssim \EE[\norm{Y_{N_k^1}-Y_{N_k^2}}{\cC^{\alpha-\frac d2-\eps}}\norm{2\und\Ups_{N_k^2}+\sigma\fZ_{N_k^1}}{H^{-\alpha+\frac d2+2\eps}}\norm{\fZ_{N_k^1}}{H^{-\alpha+\frac d2+2\eps}}] \\
		&\lesssim \EE[\norm{Y_{N_k^1}-Y_{N_k^2}}{\cC^{\alpha-\frac d2-\eps}}\norm{\fZ_{N_k^1}}{\cC^{4\alpha-d-\eps}}(\norm{\und\Ups_{N_k^2}}{H^\alpha}+\norm{\fZ_{N_k^1}}{\cC^{4\alpha-d-\eps}})] \\
		&\lesssim \norm{Y_{N_k^1}-Y_{N_k^2}}{L^c(\PP;\cC^{\alpha-\frac d2-\eps})}(1+\cE_{N_k^2}(\dot{\und\Ups}^{N_k^2}))
	\end{align*}
	for some large exponent $c>1$, possibly relabelled. Here we required $d<\frac{10}3\alpha$. Onwards,
	\begin{align*}
		\Big|&\EE\Bigl[\int_{\TT^d} Y_{N_k^2}(\fZ_{N_k^1}-\fZ_{N_k^2})(2\und\Ups_{N_k^2}+\sigma\fZ_{N_k^1}+\sigma\fZ_{N_k^2})\, \rd x\Big]\Big| \\
		&\lesssim \EE[\norm{Y_{N_k^2}(\fZ_{N_k^1}-\fZ_{N_k^2})}{H^{\alpha-\frac d2-2\eps}}\norm{2\und\Ups_{N_k^2}+\sigma\fZ_{N_k^1}+\sigma\fZ_{N_k^2}}{H^{-\alpha+\frac d2+2\eps}}] \\
		&\lesssim \EE[\norm{\fZ_{N_k^1}-\fZ_{N_k^2}}{\cC^{4\alpha-d-\eps}}\norm{Y_{N_k^2}}{\cC^{\alpha-\frac d2-\eps}}(\norm{\und\Ups_{N_k^2}}{H^\alpha}+\norm{\fZ_{N_k^1}}{\cC^{4\alpha-d-\eps}}+\norm{\fZ_{N_k^2}}{\cC^{4\alpha-d-\eps}})] \\
		&\lesssim \norm{\fZ_{N_k^1}-\fZ_{N_k^2}}{L^c(\PP;\cC^{4\alpha-d-\eps})}^c(1+\cE_{N_k^2}(\dot{\und\Ups}^{N_k^2}))
	\end{align*}
	for some $c>1$, possibly relabelled. Next, we will express the contribution to the second expectation in \eqref{diffbd} from the terms $-\sigma\int_{\TT^d} \und\Theta_{N_k^j}^3\, \rd x$, $j=1,2$, as
	\begin{equation} \begin{aligned}
		-\sigma^2\int_{\TT^d}\und\Ups_{N_k^2}^2&(\fZ_{N_k^1}-\fZ_{N_k^2})\, \rd x-\sigma^3\int_{\TT^d}\und\Ups_{N_k^2}(\fZ_{N_k^1}+\fZ_{N_k^2})(\fZ_{N_k^1}-\fZ_{N_k^2})\, \rd x \\
		&-\frac{\sigma^4}{3}\int_{\TT^d}(\fZ_{N_k^1}^2+\fZ_{N_k^1}\fZ_{N_k^2}+\fZ_{N_k^2}^2)(\fZ_{N_k^1}-\fZ_{N_k^2})\, \rd x.
	\end{aligned} \end{equation}
	Hence we now work to bound the above (under expectation). Proceeding as before, we have
	\begin{align*}
		\Big|\EE\Bigl[\int_{\TT^d}\und\Ups_{N_k^2}^2(\fZ_{N_k^1}-\fZ_{N_k^2})\, \rd x\Big]\Big| &\lesssim \EE[\norm{\fZ_{N_k^1}-\fZ_{N_k^2}}{\cC^{4\alpha-d-\eps}}\norm{\und\Ups_{N_k^2}^2}{H^\alpha}] \\
		&\lesssim \EE[\norm{\fZ_{N_k^1}-\fZ_{N_k^2}}{\cC^{4\alpha-d-\eps}}\norm{\und\Ups_{N_k}}{H^\alpha}\norm{\und\Ups_{N_k^2}}{L^2}] \\
		&\lesssim \norm{\fZ_{N_k^1}-\fZ_{N_k^2}}{L^c(\PP;\cC^{4\alpha-d-\eps})}(1+\cE_{N_k^2}(\dot{\und\Ups}^{N_k^2}))
	\end{align*}
	for some $c>1$. Next,
	\begin{align*}
		\Big|\EE&\Bigl[\int_{\TT^d}\und\Ups_{N_k^2}(\fZ_{N_k^1}+\fZ_{N_k^2})(\fZ_{N_k^1}-\fZ_{N_k^2})\, \rd x\Big]\Big| \\
		&\lesssim \EE[\norm{(\fZ_{N_k^1}-\fZ_{N_k^2})(\fZ_{N_k^1}+\fZ_{N_k^2})}{H^{4\alpha-d-2\eps}}\norm{\und\Ups_{N_k^2}}{H^{-4\alpha+d+2\eps}}] \\
		&\lesssim \EE[\norm{\fZ_{N_k^1}-\fZ_{N_k^2}}{\cC^{4\alpha-d-\eps}}(\norm{\fZ_{N_k^1}}{\cC^{4\alpha-d-\eps}}+\norm{\fZ_{N_k^2}}{\cC^{4\alpha-d-\eps}})\norm{\und\Ups_{N_k^2}}{H^\alpha}] \\
		&\lesssim \norm{\fZ_{N_k^1}-\fZ_{N_k^2}}{L^c(\PP;\cC^{4\alpha-d-\eps})}(1+\cE_{N_k^2}(\dot{\und\Ups}^{N_k^2}))
	\end{align*}
	for some $c>1$. Finally,
	\begin{align*}
		\Big|\EE&\Bigl[\int_{\TT^d}(\fZ_{N_k^1}^2+\fZ_{N_k^1}\fZ_{N_k^2}+\fZ_{N_k^2}^2)(\fZ_{N_k^1}-\fZ_{N_k^2})\, \rd x\Big]\Big| \\
		&\lesssim \EE[\norm{\fZ_{N_k^1}-\fZ_{N_k^2}}{\cC^{4\alpha-d-\eps}}(\norm{\fZ_{N_k^1}}{\cC^{4\alpha-d-\eps}}^2+\norm{\fZ_{N_k^2}}{\cC^{4\alpha-d-\eps}}^2)] \\
		&\lesssim \norm{\fZ_{N_k^1}-\fZ_{N_k^2}}{L^2(\PP;\cC^{4\alpha-d-\eps})}.
	\end{align*}
	We treat the contribution to the second expectation in \eqref{diffbd} from the terms $A|\int_{\TT^d}\wick{(Y_{N_k^j}+\und\Ups_{N_k^2}+\sigma\fZ_{N_k^j})^2}\, \rd x|^\gamma$, where $j=1,2$. Here, by factoring and using the reverse triangle, Young's, and H\"older's inequalities, we find
	\begin{equation} \label{tamediffbd} \begin{aligned}
		\EE&\Bigl[\Big|\int_{\TT^d}(\wick{Y_{N_k^1}^2}+2Y_{N_k^1}(\und\Ups_{N_k^2}+\sigma\fZ_{N_k^1})+(\und\Ups_{N_k^2}+\sigma\fZ_{N_k^1})^2)\, \rd x\Big|^\gamma \\
		&\hskip30pt-\Big|\int_{\TT^d}(\wick{Y_{N_k^2}^2}+2Y_{N_k^2}(\und\Ups_{N_k^2}+\sigma\fZ_{N_k^2})+(\und\Ups_{N_k^2}+\sigma\fZ_{N_k^2})^2)\, \rd x\Big|^\gamma\Big] \\
		&\lesssim \Bigl(\bignorm{\int_{\TT^d}(\wick{Y_{N_k^1}^2}-\wick{Y_{N_k^2}^2})\, \rd x}{L^\gamma(\PP)}+\bignorm{\int_{\TT^d}(Y_{N_k^1}-Y_{N_k^2})\und\Ups_{N_k^2}\, \rd x}{L^\gamma(\PP)} \\
		&\hskip50pt+\bignorm{\int_{\TT^d}(Y_{N_k^1}-Y_{N_k^2})\fZ_{N_k^1}\, \rd x}{L^\gamma(\PP)}+\bignorm{\int_{\TT^d}Y_{N_k^2}(\fZ_{N_k^1}-\fZ_{N_k^2})\, \rd x}{L^\gamma(\PP)} \\
		&\hskip50pt+\bignorm{\int_{\TT^d}(\fZ_{N_k^1}-\fZ_{N_k^2})(2\und\Ups_{N_k^2}+\sigma\fZ_{N_k^1}+\sigma\fZ_{N_k^2})\, \rd x}{L^\gamma(\PP)}\Bigr) \\
		&\hskip20pt \cdot \Bigl(\bignorm{\int_{\TT^d}\wick{(Y_{N_k^1}+\und\Ups_{N_k^2}+\sigma\fZ_{N_k^1})^2}\, \rd x}{L^\gamma(\PP)}^{\gamma-1} \\
		&\hskip50pt+\bignorm{\int_{\TT^d}\wick{(Y_{N_k^2}+\und\Ups_{N_k^2}+\sigma\fZ_{N_k^2})^2}\, \rd x}{L^\gamma(\PP)}^{\gamma-1}\Bigr);
	\end{aligned} \end{equation}
	since $N_k^1\ge N_k^2$, we can use Plancherel's theorem and observations on disjoint Fourier supports to obtain
	\begin{equation*}
		\int_{\TT^d}(Y_{N_k^1}-Y_{N_k^2})\und\Ups_{N_k^2}\, \rd x = \int_{\TT^d}Y_{N_k^2}(\fZ_{N_k^1}-\fZ_{N_k^2})\, \rd x = \int_{\TT^d}(\fZ_{N_k^1}-\fZ_{N_k^2})\und\Ups_{N_k^2}\, \rd x=0;
	\end{equation*}
	then, using various embeddings (Lemma~\ref{besovest}) the first factor on the right-hand side of \eqref{tamediffbd} is bounded, up to a multiplicative constant, by
	\begin{align*}
		\norm{\,\wick{Y_{N_k^1}^2}-&\wick{Y_{N_k^2}^2}\,}{L^\gamma(\PP;\cC^{2\alpha-d-\eps})}+\norm{Y_{N_k^1}-Y_{N_k^2}}{L^{2\gamma}(\PP;\cC^{\alpha-\frac d2-\eps})}\norm{\fZ_{N_k^1}}{L^{2\gamma}(\PP;\cC^{4\alpha-d-\eps})} \\
		&+\norm{\fZ_{N_k^1}-\fZ_{N_k^2}}{L^{2\gamma}(\PP;\cC^{4\alpha-d-\eps})}(\norm{\fZ_{N_k^1}}{L^{2\gamma}(\PP;\cC^{4\alpha-d-\eps})}+\norm{\fZ_{N_k^2}}{L^{2\gamma}(\PP;\cC^{4\alpha-d-\eps})}),
	\end{align*}
	and tends to $0$ as $N_k^1,N_k^2\to\infty$. We now establish a uniform upper bound on the second factor on the right-hand side of \eqref{tamediffbd}. The integral in the second term in this factor can be written as
	\begin{align*}
		\int_{\TT^d}\wick{(Y_{N_k^2}+\und\Ups_{N_k^2}&+\sigma\fZ_{N_k^2})^2}\, \rd x+\int_{\TT^d}(\wick{Y_{N_k^1}^2}-\wick{Y_{N_k^2}^2})\, \rd x+2\sigma\int_{\TT^d}Y_{N_k^1}(\fZ_{N_k^1}-\fZ_{N_k^2})\, \rd x \\
		&+2\sigma\int_{\TT^d} \fZ_{N_k^2}(Y_{N_k^1}-Y_{N_k^2})\, \rd x+\sigma^2\int_{\TT^d}(\fZ_{N_k^1}-\fZ_{N_k^2})(\fZ_{N_k^1}+\fZ_{N_k^2})\, \rd x
	\end{align*}
	and so it suffices to find a uniform bound on the first. First suppose that $\gamma=\frac{d}{d-2\alpha}$. Then, by Lemma~\ref{phi33heart} and the subsequent remark, we have
	\begin{align*}
		\EE\Bigl[\Big|\int_{\TT^d}\wick{(Y_{N_k^2}+\und\Theta_{N_k^2})^2}\, \rd x\Big|^\frac{d}{d-2\alpha}\Big] &\lesssim 1+\EE\Bigl[\Big|\int_{\TT^d}Y_{N_k^2}\und\Ups_{N_k^2}\Big|^\frac{d}{d-2\alpha}\Big]+\EE\norm{\und\Ups_{N_k^2}}{L^2}^\frac{2d}{d-2\alpha} \\
		&\lesssim 1+\cE_{N_k^2}(\und\Ups_{N_k^2}).
	\end{align*}
	Now, suppose $\gamma>\frac{d}{d-2\alpha}$. Proceeding as in the proof of Proposition~\ref{uniexpint} (v), we have
	\begin{align*}
		\EE\Bigl[-\delta\norm{Y_{N_k^2}+\und\Theta_{N_k^2}}{\cA}^q+&\sigma\int_{\TT^d}Y_{N_k^2}\und\Theta_{N_k^2}^2\, \rd x+\frac\sigma3\int_{\TT^d}\und\Theta_{N_k^2}^3\, \rd x\Big] \\
		&\le \EE\Bigl[A'\Big|\int_{\TT^d}\wick{(Y_{N_k^2}+\und\Theta_{N_k^2})^2}\, \rd x\Big|^\frac{d}{d-2\alpha}+\frac14\norm{\und\Ups_{N_k^2}}{H^\alpha}^2\Big]+C
	\end{align*}
	for some new $A'$ and large constant $C$. By virtue of \eqref{undUpsdef}, our choice of $\und\Ups_{N_k^2}$ as an $\eps$-optimiser for $-\log\cZ_{N_k^2,\delta}$, it therefore follows that
	\begin{equation*}
		\EE\Bigl[A'\Big|\int_{\TT^d}\wick{(Y_{N_k^2}+\und\Theta_{N_k^2})^2}\, \rd x\Big|^\frac{d}{d-2\alpha}-A\Big|\int_{\TT^d}\wick{(Y_{N_k^2}+\und\Theta_{N_k^2})^2}\, \rd x\Big|^\gamma\Big]\ge\log\cZ_{N_k^2,\delta}+C,
	\end{equation*}
	where $C$ is possibly re-labelled. However, when $\gamma>\frac{d}{d-2\alpha}$, one has $A'r^{\frac{d}{d-2\alpha}}-Ar^\gamma\le-\frac A2r^\gamma+C$ for any $r>0$, for some large $C$, and so we conclude that
	\begin{equation*}
		\EE\Bigl[\Big|\int_{\TT^d}\wick{(Y_{N_k^2}+\und\Theta_{N_k^2})^2}\, \rd x\Big|^\gamma\Big] \le -\frac2A\log\cZ_{N_k^2,\delta}+C,
	\end{equation*}
	and the above is bounded uniformly in $N_k^2$. It remains to bound $\cE_{N_k^2}(\dot{\und\Ups}^{N_k^2})$. Arguing as in the beginning of the proof of Proposition~\ref{uniexpint} (v), there exist $\delta'>0$ and some constant $C(\sigma,\delta')$ such that
	\begin{align*}
		-\log\cZ_{N,\delta} &\ge \inf_{\dot\Ups^N\in\HH_{\rm a}^\alpha}\Bigl[\EE\Bigl[\norm{Y_N+\Ups_N+\sigma\fZ_N}{\cA}^q-\frac\sigma3\int_{\TT^d}(\Ups_N+\sigma\fZ_N)\, \rd x+\frac{\delta'}{2}\norm{\Ups}{L^2}^{2\gamma} \\
		&\hskip100pt +\frac18\norm{\Ups}{H^\alpha}^2\Big]+\frac14\cE_N(\dot\Ups^N)\Big]-C(\sigma,\delta').
	\end{align*}	
	Proceeding as in the remainder of the proof to bound the first term in the infimum and using \eqref{undUpsdef}, we have that
	\begin{equation*}
		\cE_{N_k^2}(\dot{\und\Ups}^{N_k^2})\lesssim -\log\cZ_{N_k^2,\delta}+C
	\end{equation*}
	for some $C>0$. But the right-hand side of the above is bounded above uniformly in $k\in\NN$, from which we obtain the desired \eqref{uniquenesspart1}. Next, we prove that $\vartheta_\delta^1=\vartheta_\delta^2$. As done previously, it suffices to establish that for any bounded Lipschitz $F:\cC^{\alpha-\frac d2-\eps}\to \RR$, assuming $N_k^1\ge N_k^2$, we have
	\begin{equation}
		\lim_{k\to\infty}\int\exp(F(u))\, \vartheta_{N_k^1,\delta}(\rd u)\ge \lim_{k\to\infty}\int\exp(F(u))\, \vartheta_{N_k^2,\delta}(\rd u).
	\end{equation}
	In fact, as above, and since $F$ is bounded, it suffices to show
	\begin{align*}
		\limsup_{k\to\infty}\Bigl[-\log\Bigl(&\int \exp(F(u_{N_k^1})-W_{N_k^1,\delta}(u))\, \mu(\rd u)\Bigr) \\
		&+\log\Bigl(\int \exp(F(u_{N_k^2})-W_{N_k^2,\delta}(u))\, \mu(\rd u)\Bigr)\Big]\le 0.
	\end{align*}
	By picking an $\eps$-optimiser for the Bou\'e-Dupuis minimisation problem as done previously, the left-hand side of the above is bounded by
	\begin{align*}
		&\EE\Bigl[-F(Y_{N_k^1}+\und\Ups_{N_k^2}+\sigma\fZ_{N_k^1})+\delta\norm{Y_{N_k^1}+\und\Ups_{N_k^2}+\sigma\fZ_{N_k^1}}{\cA}^q \\
		&\hskip60pt-\sigma\int_{\TT^d} Y_{N_k^1}(\und\Ups_{N_k^2}+\sigma\fZ_{N_k^1})^2\, \rd x-\frac\sigma3\int_{\TT^d} (\und\Ups_{N_k^2}+\sigma\fZ_{N_k^1})^3\, \rd x \\
		&\hskip60pt +A\Big|\wick{(Y_{N_k^1}+\und\Ups_{N_k^2}+\fZ_{N_k^1})^2}\, \rd x\Big|^\gamma+\frac12\int_0^1\norm{\dot{\und\Ups}_{N_k^2}(t)}{H^\alpha}^2\, \rd x\Big] \\
		&\hskip10pt+\EE\Bigl[F(Y_{N_k^2}+\und\Ups_{N_k^2}+\sigma\fZ_{N_k^2})-\delta\norm{Y_{N_k^2}+\und\Ups_{N_k^2}+\sigma\fZ_{N_k^2}}{\cA}^q \\
		&\hskip70pt +\sigma\int_{\TT^d} Y_{N_k^2}(\und\Ups_{N_k^2}+\sigma\fZ_{N_k^2})^2\, \rd x+\frac\sigma3\int_{\TT^d} (\und\Ups_{N_k^2}+\sigma\fZ_{N_k^2})^3\, \rd x \\
		&\hskip70pt -A\Big|\int_{\TT^d}\wick{(Y_{N_k^2}+\und\Ups_{N_k^2}+\fZ_{N_k^2})^2}\, \rd x\Big|^\gamma-\frac12\int_0^1\norm{\dot{\und\Ups}^{N_k^2}(t)}{H^\alpha}^2\, \rd x\Big]+\eps.
	\end{align*}
	Given \eqref{uniquenesspart1}, it suffices to prove that
	\begin{equation*}
		\lim_{k\to\infty}\EE|-F(Y_{N_k^1}+\und\Ups_{N_k^2}+\sigma\fZ_{N_k^1})+F(Y_{N_k^2}+\und\Ups_{N_k^2}+\sigma\fZ_{N_k^2})|=0.
	\end{equation*}
	Say $F$ is $C$-Lipschitz, so that the expectation under the limit is bounded by
	\begin{equation*}
		C\EE\norm{(Y_{N_k^1}-Y_{N_k^2})+\sigma(\fZ_{N_k^1}-\fZ_{N_k^2})}{\cC^{\alpha-\frac d2-\eps}},
	\end{equation*}
	which is enough to conclude the proof.
\end{proof}

To complete our program of construction, we require the following proposition to make sense of the $\sigma$-finite version $\varrho^\delta$ of $\Phi^3_d$ in the strongly nonlinear case.

\begin{prop}[$\vartheta_\delta$-a.s. finiteness of the $\cA$ norm] \label{asfiniteness}
	One has $\norm{u}{\cA}<\infty$ for $\vartheta_\delta$-a.e. $u$, and, in particular, the measure
	\begin{equation}
		\varrho^\delta(\rd u) = \exp(\delta\norm{u}{\cA}^q)\, \vartheta_\delta(\rd u)
	\end{equation}
	is well-defined.
\end{prop}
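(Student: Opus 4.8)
The plan is to reduce the statement to the single bound $\int_{\cD'}\norm{u}{\cA}^q\,\vartheta_\delta(\rd u)<\infty$. Since $\norm{\cdot}{\cA}^q\ge 0$, this bound forces $\norm{u}{\cA}<\infty$ for $\vartheta_\delta$-a.e.\ $u$, so $u\mapsto\exp(\delta\norm{u}{\cA}^q)$ is a nonnegative Borel function which is $\vartheta_\delta$-a.e.\ finite; then $\varrho^\delta(\rd u)=\exp(\delta\norm{u}{\cA}^q)\,\vartheta_\delta(\rd u)$ is a well-defined Borel measure. It is moreover $\sigma$-finite: each set $\{\norm{u}{\cA}\le n\}$, $n\in\NN$, carries finite $\varrho^\delta$-mass (at most $\re^{\delta n^q}$, since $\vartheta_\delta$ is a probability measure), and their union $\{\norm{u}{\cA}<\infty\}$ has full $\vartheta_\delta$-measure while its complement carries zero $\varrho^\delta$-mass.

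To prove the bound I would first establish the uniform-in-$N$ moment estimate
\begin{equation*}
	\sup_{N\in\NN}\int_{\cD'}\norm{u}{\cA}^q\,\vartheta_{N,\delta}(\rd u)<\infty,
\end{equation*}
splitting $u=u_N+(u-u_N)$ and using $\norm{u}{\cA}^q\lesssim\norm{u_N}{\cA}^q+\norm{u-u_N}{\cA}^q$. For the low-frequency part, the elementary inequality $x\re^{-\delta x}\lesssim_\delta\re^{-\delta x/2}$ applied with $x=\norm{u_N}{\cA}^q$, together with the definition \eqref{tamedmeasure} of $\vartheta_{N,\delta}$, gives
\begin{equation*}
	\int_{\cD'}\norm{u_N}{\cA}^q\,\vartheta_{N,\delta}(\rd u)\lesssim_\delta\cZ_{N,\delta}^{-1}\cZ_{N,\delta/2},
\end{equation*}
which is bounded uniformly in $N$ by Proposition~\ref{uniexpint}(v) (applied with $\delta/2$ in place of $\delta$) and the lower bound $\inf_N\cZ_{N,\delta}>0$ obtained in the proof of Proposition~\ref{tightness}(ii); note it is essential here to compare $\vartheta_{N,\delta}$ with $\vartheta_{N,\delta/2}$ rather than with $\varrho_N$, since in the regime of Theorem~\ref{main}(iv) one cannot assume $\sup_N\cZ_N<\infty$. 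For the high-frequency part, the density of $\vartheta_{N,\delta}$ with respect to $\mu$ depends on $u$ only through $u_N=\pi_N u$, so the $\vartheta_{N,\delta}$-law of $u-u_N$ coincides with its $\mu$-law, whence
\begin{equation*}
	\int_{\cD'}\norm{u-u_N}{\cA}^q\,\vartheta_{N,\delta}(\rd u)=\EE\norm{(1-\pi_N)Y}{\cA}^q\lesssim\EE\norm{Y}{\cA}^q\lesssim\EE\norm{Y}{\cC^{\alpha-\frac d2-\eps}}^q<\infty
\end{equation*}
uniformly in $N$, using that $\pi_N$, hence $1-\pi_N$, is bounded on $\cA=B_{3,\infty}^{-2s}$ uniformly in $N$ (it is a Fourier multiplier commuting with the Littlewood--Paley blocks and is uniformly bounded on $L^3$), then the embedding $\cC^{\alpha-\frac d2-\eps}\hookrightarrow\cA$ and Lemma~\ref{pathwreg}(i).

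With the uniform moment estimate in hand I would pass to the weak limit. Since the embedding $\cC^{\alpha-\frac d2-\eps}\hookrightarrow\cA$ is a continuous linear map, $u\mapsto\norm{u}{\cA}^q$ is a nonnegative continuous function on $\cC^{\alpha-\frac d2-\eps}$; approximating it from below by the bounded continuous truncations $\norm{u}{\cA}^q\wedge R$, applying the weak convergence $\vartheta_{N,\delta}\to\vartheta_\delta$ on $\cC^{\alpha-\frac d2-\eps}$ (from Propositions~\ref{tightness} and \ref{uniqueness} via Prokhorov's theorem), and then letting $R\to\infty$ by monotone convergence, one obtains
\begin{equation*}
	\int_{\cD'}\norm{u}{\cA}^q\,\vartheta_\delta(\rd u)\le\liminf_{N\to\infty}\int_{\cD'}\norm{u}{\cA}^q\,\vartheta_{N,\delta}(\rd u)\le\sup_{N\in\NN}\int_{\cD'}\norm{u}{\cA}^q\,\vartheta_{N,\delta}(\rd u)<\infty,
\end{equation*}
which is exactly the required bound, and the proposition follows.

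The main obstacle is the uniform moment estimate of the first step, and the subtlety there is purely a matter of bookkeeping: one must route the estimate through the reference measures $\vartheta_{N,\delta/2}$ (exploiting the self-improvement $\cZ_{N,\delta/2}/\cZ_{N,\delta}\lesssim_\delta 1$ rather than the unavailable $\sup_N\cZ_N<\infty$) and must peel off the Gaussian high-frequency tail of $u$ under $\vartheta_{N,\delta}$. Once these two points are in place, the passage to the weak limit is the standard lower-semicontinuity argument.
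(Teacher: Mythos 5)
Your proof is correct, and it takes a genuinely different route from the paper's. The paper controls $\norm{u}{\cA}$ under $\vartheta_\delta$ by running the integral through the family of nonnegative, $L^1$-normalised mollifiers $\varphi_\eps$: since $\norm{\varphi_\eps\ast u}{\cA}\le\norm{u}{\cA}$ (Young) and $\varphi_\eps\ast u=\varphi_\eps\ast u_N$ once $N$ is large, the mollification serves simultaneously as a surrogate for the frequency truncation and as a tool for the Fatou step $\norm{\varphi_\eps\ast u}{\cA}\to\norm{u}{\cA}$, allowing the paper to avoid any explicit treatment of the high-frequency tail. You instead decompose $u=u_N+(u-u_N)$ and dispose of the high-frequency tail directly, observing that the $\vartheta_{N,\delta}$-law of $(1-\pi_N)u$ equals its $\mu$-law (because the Radon--Nikodym density depends on $u$ only through $u_N$, which is $\mu$-independent of $(1-\pi_N)u$), together with the uniform $\cA$-boundedness of $\pi_N$ (which follows from the uniform $L^3$-boundedness of the sharp $\ell^\infty$-cutoff and commutation with the Littlewood--Paley blocks). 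Your treatment of the low-frequency term is essentially the same as the paper's (both hinge on $\norm{u_N}{\cA}^a\re^{-\delta\norm{u_N}{\cA}^q}\lesssim_\delta\re^{-\frac\delta2\norm{u_N}{\cA}^q}$ and the bound $\cZ_{N,\delta}^{-1}\cZ_{N,\delta/2}\lesssim 1$, the denominator being controlled by the lower bound $\inf_N\cZ_{N,\delta}>0$ from the proof of tightness, rather than the unavailable $\sup_N\cZ_N$); and your passage to the weak limit via truncations $\norm{u}{\cA}^q\wedge R$ and monotone convergence is a clean substitute for the paper's $\chi$-cutoff combined with Fatou. A minor difference with no mathematical consequence: you establish the stronger $q$-th moment bound $\int\norm{u}{\cA}^q\,\vartheta_\delta(\rd u)<\infty$, while the paper only records the first moment; both suffice for $\vartheta_\delta$-a.s.\ finiteness and hence for the well-definedness of $\varrho^\delta$.
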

\begin{proof}
	Let $\hat\varphi_1\in C_c^\infty(\RR^d)$ be radial with $\norm{\hat\varphi_1}{L^2(\RR^d)}=1$, and set 
	\begin{equation*}
		\hat\varphi(\xi) = \int_{\RR^d} \hat\varphi_1(\xi-\eta)\hat{\bar\varphi}_1(-\eta)\, \rd\eta.
	\end{equation*}
	For $\eps>0$, define the periodic function $\varphi_\eps$ by its Fourier coefficients $\hat\varphi_\eps(n)=\hat\varphi(\eps n)$. As $\hat\varphi$ has compact support, there exists $N_0$ depending on $\eps$ such that $\varphi_\eps\ast u=\varphi_\eps\ast u_N$ for $N\ge N_0$. By the Poisson summation formula,
	\begin{equation*}
		\varphi_\eps(x)=\sum_{m\in\ZZ^d}\eps^{-d}|\cF^{-1}_{\RR^d}\hat\varphi_1(\eps^{-1}(x+m))|^2
	\end{equation*}
	where $\cF_{\RR^d}$ is the Fourier transform on $\RR^d$. That is, $\varphi_\eps\ge 0$. Moreover, $\norm{\varphi_\eps}{L^1}$ is nothing but $\varphi(0)$, which is just $\norm{\varphi_1}{L^2(\RR^d)}^2=1$. Hence by Young's convolution inequality,
	\begin{equation*}
		\norm{\varphi_\eps\ast u}{\cA}\le\norm{u}{\cA}.
	\end{equation*}
	Finally, $(\varphi_\eps)_{\eps>0}$ is an approximation to the identity, and so $\varphi_\eps\ast u\to u$ in $\cA$ as $\eps\downarrow 0$. 

	Next, let $\chi:[0,\infty)\to[0,1]$ be smooth and decreasing, such that $\chi=1$ on $[0,1]$ and $\chi=0$ on $(2,\infty)$. By the embedding $\cC^{\alpha-\frac d2-\eps}\hookrightarrow\cA$, for any $M>0$ and any $u\in\cC^{\alpha-\frac d2-\eps}$, we have
	\begin{equation*}
		\norm{u}{\cA}\chi\Bigg(\frac{\norm{u}{\cC^{\alpha-\frac d2-\eps}}}{M}\Bigg) \lesssim M.
	\end{equation*}
	Hence, by monotone convergence, Fatou's lemma with $(\varphi_\eps)$ acting as an approximate identity, properties of $\varphi_\eps$ discussed above, and the weak convergence $\vartheta_{N,\delta}\overset{\ast}{\rightharpoonup}\vartheta_\delta$, we have
	\begin{equation*}
		\int\norm{u}{\cA}\, \vartheta_\delta(\rd u) \le \lim_{M\to\infty}\liminf_{\eps\to 0}\lim_{N\to\infty}\int \norm{\varphi_\eps\ast u_N}{\cA}\chi\Bigg(\frac{\norm{u}{\cC^{\alpha-\frac d2-\eps}}}{M}\Bigg)\, \vartheta_{N,\delta}(\rd u)
	\end{equation*}
	Using the bound $\norm{\varphi_\eps\ast u}{\cA}\le\norm{u}{\cA}$, the fact that $\chi\le 1$, and the definition of $\vartheta_{N,\delta}$, the above is bounded by (a constant multiple of)
	\begin{equation*}
		\lim_{N\to\infty}\int \norm{u_N}{\cA}\exp(-\delta\norm{u_N}{\cA}^q-V_N(u))\, \mu(\rd u),
	\end{equation*}
	which is a finite quantity as can be observed by $\norm{u_N}{\cA}\lesssim_{\delta,q} \exp(\frac\delta2\norm{u_N}{\cA}^q)$ and the uniform exponential integrability of Proposition~\ref{uniexpint} (ii) applied to $(\vartheta_{N,\frac\delta2})$. Hence $\norm{u}{\cA}<\infty$ for $\vartheta_\delta$-a.e. $u$, thus completing the proof.
\end{proof}

To conclude the subsection, we include here the proofs of Lemmas \ref{b-dcrossterms} and \ref{phi33heart}.

\begin{proof}[Proof of Lemma~\ref{b-dcrossterms}.] 
	For \eqref{YNTN2bd}, use, in order, \eqref{besovduality}, \eqref{immediatebesovembeddings}, \eqref{besovfractionalleibniz}, \eqref{Upsdef} and \eqref{immediatebesovembeddings}, and \eqref{sobolevinterpolation} and \eqref{immediatebesovembeddings} to write
	\begin{align*}
		\Big|\int_{\TT^d} Y_N\Theta_N^2\, \rd x\Big| &\lesssim \norm{Y_N}{H^{\alpha-\frac{d}{2}-2\eps}}\norm{\Theta_N^2}{H^{-\alpha+\frac{d}{2}+2\eps}} \\
		&\lesssim \norm{Y_N}{\cC^{\alpha-\frac{d}{2}-\eps}}\norm{\Theta_N^2}{H^{-\alpha+\frac{d}{2}+2\eps}} \\
		&\lesssim \norm{Y_N}{\cC^{\alpha-\frac{d}{2}-\eps}}\norm{\Theta_N}{H^{-\alpha+\frac{d}{2}+2\eps}}\norm{\Theta_N}{L^2} \\
		&\lesssim \norm{Y_N}{\cC^{\alpha-\frac{d}{2}-\eps}}(\norm{\Ups_N}{H^{-\alpha+\frac{d}{2}+2\eps}}(\norm{\Ups_N}{L^2}+\norm{\fZ_N}{\cC^{-\alpha+\frac{d}{2}+2\eps}}) \\ 
		&\hskip100pt+\norm{\fZ_N}{\cC^{-\alpha+\frac{d}{2}+2\eps}}^2) \\
		&\lesssim \norm{Y_N}{\cC^{\alpha-\frac{d}{2}-\eps}}(\norm{\Ups_N}{L^2}^{2-\frac{d}{2\alpha}-\frac{2\eps}{\alpha}}\norm{\Ups_N}{H^\alpha}^{-1+\frac{d}{2\alpha}+\frac{2\eps}{\alpha}}(\norm{\Ups_N}{L^2}+\norm{\fZ_N}{\cC^{4\alpha-d-\eps}}) \\ 
		&\hskip 100pt+\norm{\fZ_N}{\cC^{4\alpha-d-\eps}}^2); \\
	\end{align*} 
	now \eqref{YNTN2bd} follows after Young's inequality; we must assume $d\le 3\alpha$ for $\frac{12\alpha-2d}{4\alpha-d}\le\frac{2d}{d-2\alpha}$. From here one has \eqref{YNTN2bd}. For \eqref{TN3bd}, we proceed as follows: recall \eqref{Upsdef}, and use, in order, \eqref{besovembeddings}, \eqref{sobolevinterpolation}, and Young's inequality to write
	\begin{align*}
		\Big|\int_{\TT^d} \Ups_N^3\, \rd x\Big| &\lesssim \norm{\Ups_N}{H^{\frac{d}{6}}}^3 \\
		&\lesssim \norm{\Ups_N}{L^2}^{3-\frac{d}{2\alpha}}\norm{\Ups_N}{H^\alpha}^{\frac{d}{2\alpha}} \\
		&\lesssim C(\delta)\norm{\Ups_N}{L^2}^\frac{12\alpha-2d}{4\alpha-d}+\delta\norm{\Ups_N}{H^\alpha}^2;
	\end{align*} 
	next, use, in order, \eqref{besovduality}, \eqref{immediatebesovembeddings}, \eqref{besovfractionalleibniz}, \eqref{immediatebesovembeddings} to write
	\begin{align*}
		\Big|\int_{\TT^d} \Ups_N^2\fZ_N\, \rd x\Big| &\lesssim \norm{\Ups_N^2}{H^{-4\alpha+d+2\eps}}\norm{\fZ_N}{\cC^{4\alpha-d-\eps}} \\
		&\lesssim \norm{\Ups_N}{L^2}\norm{\Ups_N}{H^{-4\alpha+d+2\eps}}\norm{\fZ_N}{\cC^{4\alpha-d-\eps}} \\
		&\lesssim \norm{\Ups_N}{L^2}\norm{\Ups_N}{H^\alpha}\norm{\fZ_N}{\cC^{4\alpha-d-\eps}}
	\end{align*} 
	and use Young's inequality; continuing, use, in order, \eqref{besovduality}, \eqref{besovfractionalleibniz}, and \eqref{immediatebesovembeddings} to write
	\begin{align*}
		\Big|\int_{\TT^d}\Ups_N\fZ_N^2\, \rd x\Big| &\le \norm{\Ups_N}{H^\alpha}\norm{\fZ_N^2}{H^{-\alpha}} \\
		&\lesssim \norm{\Ups_N}{H^\alpha}\norm{\fZ_N}{L^2}\norm{\fZ_N}{H^{-\alpha}} \\
		&\lesssim \norm{\Ups_N}{H^\alpha}\norm{\fZ_N}{\cC^{4\alpha-d-\eps}}^2
	\end{align*}
	and use Young's inequality; finally, note simply by \eqref{immediatebesovembeddings} that
	\begin{equation*}
		\Big|\int_{\TT^d} \fZ_N^3\, \rd x\Big| \lesssim \norm{\fZ_N}{\cC^{4\alpha-d-\eps}}^3,
	\end{equation*}
	thus yielding \eqref{TN3bd}. Next, we prove \eqref{tamingtermbd}. First observe that
	\begin{equation*} \begin{aligned}
		A\Big|\int_{\TT^d}\wick{(Y_N+\Theta_N)^2}\, \rd x\Big|^\gamma &\ge \frac{A}{2}\Big|\int_{\TT^d} (2Y_N\Ups_N+\Ups_N)\, \rd x\Big|^\gamma \\ 
		&\hskip12pt-C\Big|\int_{\TT^d} (\wick{Y_N^2}+2\sigma Y_N\fZ_N+2\sigma\Ups_N\fZ_N+\sigma^2\fZ_N^2)\, \rd x\Big|^\gamma.
	\end{aligned} \end{equation*}
	Now, using, in order, \eqref{besovduality}, \eqref{immediatebesovembeddings}, and Young's inequality, we have
	\begin{align*}
		\Big|\int_{\TT^d}Y_N\fZ_N\, \rd x\Big|^\gamma &\le \norm{Y_N}{H^{\alpha-\frac{d}{2}-2\eps}}^\gamma\norm{\fZ_N}{H^{-\alpha+\frac{d}{2}+2\eps}}^\gamma \\
		&\lesssim \norm{Y_N}{\cC^{\alpha-\frac{d}{2}-\eps}}^\gamma\norm{\fZ_N}{\cC^{4\alpha-d-\eps}}^\gamma \\
		&\lesssim \norm{Y_N}{\cC^{\alpha-\frac{d}{2}-\eps}}^{2\gamma}+\norm{\fZ_N}{\cC^{4\alpha-d-\eps}}^{2\gamma}.
	\end{align*}
	Using, in order, \eqref{besovduality}, \eqref{immediatebesovembeddings}, and Young's inequality, we have
	\begin{align*}
		\Big|\int_{\TT^d}\Ups_N\fZ_N\, \rd x\Big|^\gamma &\le \norm{\Ups_N}{L^2}^\gamma\norm{\fZ_N}{L^2}^\gamma \\
		&\lesssim \norm{\Ups_N}{L^2}^\gamma\norm{\fZ_N}{\cC^{4\alpha-d-\eps}}^\gamma \\
		&\lesssim \frac{\delta}{C}\norm{\Ups_N}{L^2}^{2\gamma}+C'\norm{\fZ_N}{\cC^{4\alpha-d-\eps}}^{2\gamma}.
	\end{align*}
	Finally, using \eqref{immediatebesovembeddings},
	\begin{align*}
		\Big|\int_{\TT^d} \fZ_N^2\, \rd x\Big|^\gamma &\le \norm{\fZ_N}{L^2}^\frac\gamma2 \\
		&\lesssim \norm{\fZ_N}{\cC^{4\alpha-d-\eps}}^\frac{\gamma}{2}.
	\end{align*}
	Putting the final four displays together yields \eqref{tamingtermbd}. Finally, to prove \eqref{tamingtermbdeasy}, we assume $d = 2\alpha$ and proceed as in the proof of \eqref{tamingtermbd}, using also Young's inequality and \eqref{sobolevinterpolation} to write
	\begin{align*}
		\Bigl|\int_{\TT^d} Y_N\Ups_N\, \rd x\Bigr|^\gamma &\lesssim \norm{Y_N}{\cC^{\alpha-\frac d2-\eps}}^\gamma\norm{\Ups_N}{H^{2\eps}}^\gamma \\
		&\lesssim C(\delta)\norm{Y_N}{\cC^{\alpha-\frac d2-\eps}}^{\gamma c(\eps)}+\delta\norm{\Ups_N}{H^{2\eps}}^{\gamma+\eps} \\
		&\lesssim C(\delta)\norm{Y_N}{\cC^{\alpha-\frac d2-\eps}}^{\gamma c(\eps)}+\delta\norm{\Ups_N}{L^2}^{(\gamma+\eps)(1-\frac{2\eps}\alpha)}\norm{\Ups_N}{H^\alpha}^{(\gamma+\eps)\frac{2\eps}\alpha};
	\end{align*}
	using Young's inequality, we now obtain \eqref{tamingtermbdeasy}, and and thus complete the proof of Lemma~\ref{b-dcrossterms}.
\end{proof}
\begin{proof}[Proof of Lemma~\ref{phi33heart}.]
	On the event $\{\norm{\Ups_N}{L^2}^2>|\int_{\TT^d}Y_N\Ups_N\, \rd x|\}$ we have
	\begin{equation*}
		\frac12\norm{\Ups_N}{L^2}^2 \le \Big|\int_{\TT^d}(2Y_N\Ups_N+\Ups_N^2)\, \rd x\Big| \le \frac32\norm{\Ups_N}{L^2}^2,
	\end{equation*}
	and so we obtain the desired conclusion. Hereafter we work on the event $\{\norm{\Ups_N}{L^2}^2\le |\int_{\TT^d}Y_N\Ups_N\, \rd x|\}$. Define frequency projectors $\Pi_1=\1\{|\nabla|\le 2\}$ and $\Pi_j=\1\{2^{j-1}<|\nabla|\le 2^j\}$ for $j\ge 2$; set $\Pi_{\le j}=\sum_{k=1}^j \Pi_k$ and $\Pi_{>j}={\rm id}-\Pi_{\le j}$. We use $L^2$-projections of $\Ups_N$ onto $\Pi_jY_N$:
	\begin{equation*}
		\Ups_N=\sum_{j=1}^\infty (\lambda_j\Pi_jY_N+w_j),
	\end{equation*}
	where
	\begin{equation*}
		\lambda_j = 
		\begin{cases} 
			\frac{\innerprod{\Ups_N}{\Pi_jY_N}{L^2}}{\norm{\Pi_jY_N}{L^2}^2}, & \text{if}\ \norm{\Pi_jY_N}{L^2}\ne 0, \\ 
			0, & \text{otherwise}; 
		\end{cases} 
		\qquad w_j=\Pi_j\Ups_N-\lambda_j\Pi_jY_N.
	\end{equation*}
	Following from this orthogonal decomposition, we have
	\begin{align*}
		\norm{\Ups_N}{L^2}^2 &=\sum_{j=1}^\infty(\lambda_j^2\norm{\Pi_j\Ups_N}{L^2}^2+\norm{w_j}{L^2}^2), \\
		\int_{\TT^d}Y_N\Ups_N\, \rd x&=\sum_{j=1}^\infty \lambda_j\norm{\Pi_jY_N}{L^2}^2.
	\end{align*}
	As $\norm{w_j}{L^2}^2\ge 0$, we have
	\begin{equation*}
		\sum_{j=1}^\infty \lambda^2 \norm{\Pi_jY_N}{L^2}^2\le C\Big|\sum_{j=1}^\infty \lambda_j \norm{\Pi_jY_N}{L^2}^2\Big|.
	\end{equation*}
	We now work to bound the right-hand-side of the above, with the idea of decomposing the sum into high and low frequencies. Namely, fix $j_0$ (to be chosen later), noting that, since $|\lambda_j|\le \frac{\norm{\Pi_j\Ups_N}{L^2}}{\norm{\Pi_jY_N}{L^2}}$ by the Cauchy-Schwarz inequality, we have
	\begin{align*}
		\Big|\sum_{j>j_0}\lambda_j\norm{\Pi_jY_N}{L^2}^2\Big| &\le \Bigl(\sum_{j=1}^\infty 2^{2\alpha j}\lambda_j^2\norm{\Pi_jY_N}{L^2}^2\Bigr)^\frac12\Bigl(\sum_{j>j_0}2^{-2\alpha j}\norm{\Pi_jY_N}{L^2}^2\Bigr)^\frac12 \\
		&\le \Bigl(\sum_{j=1}^\infty 2^{2\alpha j}\norm{\Pi_j\Ups_N}{L^2}^2\Bigr)^\frac12\Bigl(\sum_{j>j_0}2^{-2\alpha j}\norm{\Pi_jY_N}{L^2}^2\Bigr)^\frac12 \\
		&\lesssim \norm{\Ups_N}{H^\alpha}\norm{\Pi_{>j_0}Y_N}{H^{-\alpha}},
	\end{align*}
	where we used the Littlewood-Paley characterisation of Sobolev norms for the last line. On the other hand,
	\begin{align*}
		\Big|\sum_{j\le j_0} \lambda_j\norm{\Pi_jY_N}{L^2}^2\Big| &\le \Bigl(\sum_{j=1}^\infty\lambda_j^2\norm{\Pi_jY_N}{L^2}^2\Bigr)^{\frac12}\Bigl(\sum_{j\le j_0}\norm{\Pi_jY_N}{L^2}^2\Bigr)^{\frac12} \\
		&\le C^{\frac12}\Big|\sum_{j=1}^\infty \lambda_j \norm{\Pi_jY_N}{L^2}^2\Big|^{\frac12}\Bigl(\sum_{j\le j_0}\norm{\Pi_jY_N}{L^2}^2\Bigr)^{\frac12} \\
		&\le \frac12\Big|\sum_{j=1}^\infty \lambda_j \norm{\Pi_jY_N}{L^2}^2\Big|+C'\norm{\Pi_{\le j_0}Y_N}{L^2}^2,
	\end{align*}
	using Young's inequality for the last line. It follows that
	\begin{equation*}
		\Big|\sum_{j=1}^\infty \lambda_j \norm{\Pi_jY_N}{L^2}^2\Big| \lesssim \norm{\Ups_N}{H^\alpha}\norm{\Pi_{>j_0}Y_N}{H^{-\alpha}}+\norm{\Pi_{\le j_0}Y_N}{L^2}^2.
	\end{equation*}
	We now work to bound the terms $\norm{\Pi_{>j_0}Y_N}{H^{-\alpha}}$ and $\norm{\Pi_{\le j_0}Y_N}{L^2}^2$. First observe that, as $\EE[(\ja{\nabla}^{-\alpha}\Pi_{>j_0}Y_N)(x)^2]$ is independent of $x\in\TT^d$, we have
	\begin{align*}
		\norm{\Pi_{>j_0}&Y_N}{H^{-\alpha}}^2 \\
		&= \int_{\TT^d} \wick{(\ja{\nabla}^{-\alpha}\Pi_{>j_0}Y_N)^2}\, \rd x+\EE[(\ja{\nabla}^{-\alpha}\Pi_{>j_0}Y_N(x_0))^2] \\
		&\le 2^{-aj_0}\Bigl(\sum_{j=1}^\infty 2^{2aj}\Bigl(\int_{\TT^d}\wick{(\ja{\nabla}^{-\alpha}\Pi_{>j}Y_N)^2}\, \rd x\Bigr)^2\Bigr)^{\frac12}+\EE[(\ja{\nabla}^{-\alpha}\Pi_{>j_0}Y_N(x_0))^2]
	\end{align*}
	for some $x_0\in\TT^d$. Let the right-hand-side of the display above be $2^{-aj_0}B_1+\tilde\sigma_{>j_0}$. Now, by first using Minkowski's integral inequality, followed by the hypercontractive estimate Lemma~\ref{hypercontractivity}, we have
	\begin{equation} \label{B1mombd} \begin{aligned}
		\EE B_1^p &\le \Bigl(\sum_{j=1}^\infty\bignorm{2^{aj}\int_{\TT^d}\wick{(\ja{\nabla}^{-\alpha}\Pi_{>j}Y_N)^2}\, \rd x}{L^p(\PP)}^2\Bigr)^{\frac{p}{2}} \\
		&\le \Bigl(\sum_{j=1}^\infty (p-1)^2\bignorm{2^{aj}\int_{\TT^d}\wick{(\ja{\nabla}^{-\alpha}\Pi_{>j}Y_N)^2}\, \rd x}{L^2(\PP)}^2\Bigr)^{\frac{p}{2}} 
	\end{aligned} \end{equation} 
	for any finite $p\ge 2$ (and hence $p\ge 1$). Next, using Hermite orhogonality (Lemma~\ref{hermiteorthogonality}), we have
	\begin{align*}
		\EE\Bigl[\Bigl(\int_{\TT^d}\wick{&\,\,(\ja{\nabla}^{-\alpha}\Pi_{>j}Y_N)^2}\, \rd x\Bigr)^2\Big]  \\ &= \int_{\TT^d\times\TT^d}\EE[H_2(\ja{\nabla}^{-\alpha}\Pi_{>j}Y_N(x);\tilde\sigma_{>j})H_2(\ja{\nabla}^{-\alpha}\Pi_{>j}Y_N(y);\tilde\sigma_{>j})]\, \rd x\, \rd y \\
		&=\int_{\TT^d\times\TT^d} 2(\EE[(\ja{\nabla}^{-\alpha}\Pi_{>j}Y_N(x)(\ja{\nabla}^{-\alpha}\Pi_{>j}Y_N(y))])^2\, \rd x\, \rd y \\
		&= 2\sum_{2^j<|n|\le N}\frac{1}{\ja{n}^{8\alpha}} \\
		&\lesssim 2^{-(8\alpha-d)j}.
	\end{align*}
	In particular, using the results of the above display with \eqref{B1mombd}, we have
	\begin{equation*}
		\EE B_1^p\lesssim p^p\Bigl(\sum_{j=1}^\infty 2^{-(8\alpha-d-2a)j}\Bigr)^{\frac{p}{2}},
	\end{equation*}
	and this is essentially bounded by $p^p$. Here we must assume $8\alpha>d+2a$. Moreover $\tilde\sigma_{>j_0}\sim2^{-(4\alpha-d)j_0}$. Analogously to the previous computation we have
	\begin{align*}
		\norm{\Pi_{\le j_0}Y_N}{L^2}^2 &=\int_{\TT^d}\wick{(\Pi_{\le j_0}Y_N)^2}\, \rd x+\EE[(\Pi_{\le j_0}Y_N(x_0))^2] \\ 
		&\lesssim \sum_{j=1}^\infty \Big|\int_{\TT^d}\wick{(\Pi_jY_N)^2}\, \rd x\Big|+\EE[(\Pi_{\le j_0}Y_N(x_0))^2],
	\end{align*}
	where, to obtain the last estimate we use the fact that $\Pi_{\le j_0}=\Pi_1+\cdots+\Pi_{j_0}$ and the multinomial expansion for the Wick power. Label the right-hand-side above by $B_2+\tilde\sigma_{\le j_0}$. Working analogously to the computations done above, we have
	\begin{align*}
		\EE B_2^p &\le \Bigl(\sum_{j=1}^\infty \bignorm{\int_{\TT^d}\wick{(\Pi_jY_N)^2}\, \rd x}{L^p(\PP)}\Bigr)^p \\
		&\le \Bigl(\sum_{j=1}^\infty (p-1)\bignorm{\int_{\TT^d}\wick{(\Pi_jY_N)^2}\, \rd x}{L^2(\PP)}\Bigr)^p \\
		&\lesssim p^p\Bigl(\sum_{j=1}^\infty\sum_{2^{j-1}<|n|\le 2^j}\frac{1}{\ja{n}^{4\alpha}}\Bigr)^p,
	\end{align*}
	and, as before, this is essentially bounded by $p^p$. Also $\tilde\sigma_{\le j_0}\lesssim2^{(d-2\alpha)j_0}$. Altogether now, after several applications of Young's inequality, we have
	\begin{align*}
		\norm{\Ups_N}{L^2}^2 &\lesssim \Big|\int_{\TT^d} Y_N\Ups_N\, \rd x\Big| \\
		&= \Big|\sum_{j=1}^\infty \lambda_j\norm{\Pi_jY_N}{L^2}^2\Big| \\
		&\lesssim \norm{\Ups_N}{H^\alpha}(\norm{\Pi_{>j_0}Y_N}{H^{-\alpha}}^2)^{\frac12}+\norm{\Pi_{\le j_0}Y_N}{L^2}^2 \\
		&\lesssim \norm{\Ups_N}{H^\alpha}(2^{-\frac{aj_0}{2}}B_1^{\frac12}+2^{-\frac{(4\alpha-d)j_0}{2}})+B_2+2^{(d-2\alpha)j_0}.
	\end{align*}
	Now, taking $j_0$ so that $2^{j_0}\sim1+\norm{\Ups_N}{H^\alpha}^{\frac2d}$ (chosen so that $\norm{\Ups_N}{H^\alpha}2^{-\frac{(4\alpha-d)j_0}{2}}\sim 2^{(d-2\alpha)j_0}$) and applying Young's inequality, the final quantity above can be bounded up to a constant by 
	\begin{equation*}
		\norm{\Ups_N}{H^\alpha}^{(1-\frac ad)+\eps}+\norm{\Ups_N}{H^\alpha}^{\frac{2d-4\alpha}{d}}+B_1^{c(\eps)}+B_2
	\end{equation*}
	for small $\eps$ and large $c(\eps)$. We can choose $a,\eps$, so that the above is bounded by the right-hand-side of \eqref{phi33hearteq}. This completes the proof.
\end{proof}
\subsection{The regular and weakly nonlinear regimes.}
Next, we move to prove the properties of the $\Phi^3_d$ measures $\varrho$ and $\varrho^\delta$, namely, the continuity $\varrho\ll\mu$ in the regime $d<3\alpha$ and the singularity $\varrho\perp\mu$ when $d=3\alpha$ (conditional on $|\sigma|$ small and the further renormalisation via the $\beta_N$). 

\begin{prop}
	Let $2\alpha<d<3\alpha$. Then, as measures on $\cC^{\alpha-\frac d2-\eps}$, we have $\varrho=\varrho^\delta$ and both are absolutely continuous with respect to $\mu$.
\end{prop}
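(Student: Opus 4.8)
The plan is to show that in the regime $2\alpha<d<3\alpha$ (where $\beta_N=0$ and $\gamma=\frac{d}{d-2\alpha}$) the $\sigma$-finite detour collapses: both $\varrho$ and $\varrho^\delta$ are absolutely continuous with respect to $\mu$, with Radon--Nikodym density a positive multiple of $\exp(-V(u))$, where $V(u)=-\frac\sigma3\int_{\TT^d}\wick{u^3}\,\rd x+A\bigl|\int_{\TT^d}\wick{u^2}\,\rd x\bigr|^\gamma$, and hence they agree (being, up to normalisation, $\cZ^{-1}\exp(-V)\,\mu$).

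First I would record the $\mu$-a.s.\ limits. By Lemma~\ref{pathwreg}(i), $\wick{u_N^k}\to\wick{u^k}$ in $\cC^{k(\alpha-\frac d2)-\eps}$ for $\mu$-a.e.\ $u$ and $k=2,3$; pairing against the constant function $1\in C^\infty$ gives $\int_{\TT^d}\wick{u_N^k}\,\rd x\to\int_{\TT^d}\wick{u^k}\,\rd x$ $\mu$-a.s., so $V_N(u)\to V(u)$ for $\mu$-a.e.\ $u$ and $V(u)<\infty$ $\mu$-a.s. Using that $\pi_N$ is bounded on $L^3$ uniformly in $N$ and commutes with the Littlewood--Paley blocks, one also has $u_N=\pi_N u\to u$ in $\cC^{\alpha-\frac d2-\eps}$ and in $\cA$ for $\mu$-a.e.\ $u$, hence $\|u_N\|_{\cA}^q\to\|u\|_{\cA}^q<\infty$ $\mu$-a.s.

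Next I would upgrade this to convergence in $L^1(\mu)$. Since $\exp(-V_N)$ has no $\mu$-integrable pointwise majorant — the $\gamma$-taming term is essential even for $\mu$-integrability — dominated convergence is unavailable, so I would argue via uniform integrability. The point is that $pV_N$ and $p\bigl(\delta\|u_N\|_{\cA}^q+V_N\bigr)$ are, respectively, of exactly the form of $V_N$ and $W_{N,\delta}$ treated in Proposition~\ref{uniexpint}(iii) and (v), with $\sigma,A,\delta$ rescaled by $p$ and $\gamma$ unchanged; as those statements hold for arbitrary $\sigma,A,\delta$, we get $\sup_N\int_{\cD'}\exp(-pV_N)\,\rd\mu<\infty$ and $\sup_N\int_{\cD'}\exp\bigl(-p(\delta\|u_N\|_{\cA}^q+V_N)\bigr)\,\rd\mu<\infty$ for any fixed $p>1$. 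Hence $\{\exp(-V_N)\}_N$ and $\{\exp(-\delta\|u_N\|_{\cA}^q-V_N)\}_N$ are uniformly integrable, and Vitali's theorem promotes the $\mu$-a.s.\ convergence above to $L^1(\mu)$ convergence. Consequently $\cZ_N\to\cZ:=\int_{\cD'}\exp(-V)\,\rd\mu$ and $\cZ_{N,\delta}\to\cZ_\delta:=\int_{\cD'}\exp(-\delta\|u\|_{\cA}^q-V)\,\rd\mu$, with $0<\cZ_\delta\le\cZ<\infty$ (strict positivity because the integrands are $\mu$-a.s.\ strictly positive, finiteness from Proposition~\ref{uniexpint}), and $\varrho_N\to\varrho$, $\vartheta_{N,\delta}\to\vartheta_\delta$ in total variation (hence weakly, so these limits are the measures named in the proposition), with
\[
\varrho(\rd u)=\cZ^{-1}\exp(-V(u))\,\mu(\rd u),\qquad \vartheta_\delta(\rd u)=\cZ_\delta^{-1}\exp\bigl(-\delta\|u\|_{\cA}^q-V(u)\bigr)\,\mu(\rd u).
\]
In particular $\varrho\ll\mu$ and $\vartheta_\delta\ll\mu$.

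Finally, since $\vartheta_\delta\ll\mu$ and $\|u\|_{\cA}<\infty$ for $\mu$-a.e.\ $u$ (because ${\rm supp}\,\mu\subseteq\cC^{\alpha-\frac d2-\eps}\hookrightarrow\cA$), the measure $\varrho^\delta(\rd u)=\exp(\delta\|u\|_{\cA}^q)\,\vartheta_\delta(\rd u)$ is well-defined (this is Proposition~\ref{asfiniteness}, now immediate), and the displayed density for $\vartheta_\delta$ yields $\varrho^\delta(\rd u)=\cZ_\delta^{-1}\exp(-V(u))\,\mu(\rd u)$, a finite measure whose $\mu$-density is a positive multiple of that of $\varrho$; thus $\varrho^\delta\ll\mu$ and $\varrho^\delta=\varrho$ as measures on $\cC^{\alpha-\frac d2-\eps}$. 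The whole argument is bookkeeping with Vitali convergence and the identification of Radon--Nikodym derivatives except for one point, which is where I expect the only real care to be needed: recognising that dominated convergence does not apply and that Proposition~\ref{uniexpint} already supplies the required super-exponential control after the harmless rescaling $\sigma,A,\delta\mapsto p\sigma,pA,p\delta$ (and that this rescaling does not spoil the Boué--Dupuis lower bound, which is exactly the strict inequality $\frac{12\alpha-2d}{4\alpha-d}<2\gamma$ — equivalently $d<3\alpha$ — used in Proposition~\ref{uniexpint}(iii)).
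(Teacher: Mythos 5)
Your proof is correct, and the first half (identifying $\varrho$ via uniform integrability $\Rightarrow$ Vitali $\Rightarrow$ $L^1$-convergence $\Rightarrow$ TV limit with density $\cZ^{-1}\exp(-V)$) is essentially what the paper does under the shorthand ``dominated convergence together with Proposition~\ref{uniexpint}''; your observation that a plain majorant is unavailable and that the $L^p$-bound obtained by the rescaling $\sigma\mapsto p\sigma$, $A\mapsto pA$, $\delta\mapsto p\delta$ (which is harmless precisely because $\frac{12\alpha-2d}{4\alpha-d}<2\gamma$ for $d<3\alpha$) is what does the real work is accurate and worth spelling out. Where you diverge from the paper is in handling $\varrho^\delta$: you run the same Vitali argument for $\vartheta_{N,\delta}$, read off $\vartheta_\delta=\cZ_\delta^{-1}\exp(-\delta\|u\|_\cA^q-V(u))\,\mu(\rd u)$, and then, using only that $\|u\|_\cA<\infty$ for $\mu$-a.e.\ (hence $\vartheta_\delta$-a.e.) $u$ via the embedding $\cC^{\alpha-\frac d2-\eps}\hookrightarrow\cA$, conclude directly that $\varrho^\delta=\cZ_\delta^{-1}\exp(-V)\,\mu$ is a finite measure with the same $\mu$-density as $\varrho$ up to a constant. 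The paper instead reuses the mollifier--cutoff--Fatou machinery from Proposition~\ref{asfiniteness}: it first derives $\varrho^\delta(\cD')\le\limsup_N\varrho_N(\cD')<\infty$ through the $\varphi_\eps$-regularisation and the auxiliary cutoff $\min\{\cdot,L\}$, and only then identifies the weak limit of $\vartheta_{N,\delta}$. Your route is cleaner for this regular regime precisely because TV convergence and $\vartheta_\delta\ll\mu$ make Proposition~\ref{asfiniteness} immediate; the paper's route has the advantage of uniform treatment across $d<3\alpha$ and $d=3\alpha$, where TV convergence fails and the mollifier argument is genuinely needed. One cosmetic point: as you compute, $\varrho^\delta(\cD')=\cZ/\cZ_\delta>1$, so the proposition's ``$\varrho=\varrho^\delta$'' should be read as equality after normalising $\varrho^\delta$; your last sentence handles this correctly.
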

\begin{proof}
	By the uniform exponential integrability Proposition~\ref{uniexpint} and dominated convergence applied to $(V_N)$, we have 
	\begin{equation}
		\varrho(\rd u) = \cZ^{-1}\exp\Bigl(\frac\sigma3\int_{\TT^d}\wick{u^3}\, \rd x-A\Big|\int_{\TT^d}\wick{u^2}\, \rd x\Big|^\gamma\Bigr)\, \mu(\rd u).
	\end{equation}
	Next, we show that $\varrho^\delta$ is a probability. Let $(\varphi_\eps)$ be as in Proposition~\ref{asfiniteness}, so that
	\begin{align*}
		\varrho^\delta(\cD') &\le \lim_{L\to\infty}\liminf_{\eps\downarrow0}\lim_{N\to\infty} \int_{\cD'}\exp(\delta\min\{\norm{\varphi_\eps\ast u_N}{\cA}^q,L\})\, \vartheta_{N,\delta}(\rd u) \\
		&\le \liminf_{L\to\infty}\limsup_{N\to\infty}\int_{\cD'}\exp(\delta\min\{\norm{u_N}{\cA}^q,L\}-\delta\norm{u_N}{\cA}^q-V_N(u_N))\, \mu(\rd u) \\
		&\le\limsup_{N\to\infty} \varrho_N(\cD'),
	\end{align*}
	which is finite by Proposition~\ref{uniexpint}. In particular we can normalise $\varrho^\delta$. Moreover as above
	\begin{equation}
		\vartheta_{N,\delta}(\rd u)\rightharpoonup\exp\Bigl(-\delta\norm{u}{\cA}^q+\frac\sigma3\int_{\TT^d}\wick{u^3}\, \rd u-A\Big|\int_{\TT^d}\wick{u^2}\, \rd x\Big|^\gamma\Bigr)\, \mu(\rd u),
	\end{equation}
	from which we may conclude $\varrho=\varrho^\delta$.
\end{proof}
\begin{prop}
	Let $d=3\alpha$ and let $\sigma$ be sufficiently small. Then, as measures on $\cC^{\alpha-\frac d2-\eps}$, we have $\varrho\perp\mu$.
\end{prop}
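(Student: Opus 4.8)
The plan is to exhibit a Borel set $E\subseteq\cC^{\alpha-\frac d2-\eps}$ with $\mu(E)=0$ and $\varrho(E)=1$, following the scheme of \cite[Section~4]{OOT24}. The mechanism is that, at $d=3\alpha$, the diverging counterterm $\beta_N$ is forced on us: it records that the optimal Bou\'e--Dupuis drift generating $\varrho_N$ contains the contribution $\sigma\fZ_N$ coming from the change-of-variable of Subsection~\ref{c-o-v}, and $\sigma\fZ_N$ leaves the Cameron--Martin space $H^\alpha$ in the limit, since $\EE\int_0^1\norm{\dot\fZ_N(t)}{H^\alpha}^2\,\rd t\gtrsim\log N$ by Lemma~\ref{pathwreg}~(ii) (equivalently $\beta_N\gtrsim\log N$). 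I will detect this through the functional
\[ \Phi_N(u)\coloneq\sgn(\sigma)\int_{\TT^d}\wick{u_N^3}\,\rd x, \]
which is $\mu$-centred but acquires a large positive mean under $\varrho_N$.

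\emph{Behaviour under $\mu$.} Here $\Phi_N\in\cH_{\le 3}$ with $\EE_\mu[\Phi_N]=0$ by Lemma~\ref{pathwreg}~(iii), and, by Hermite orthogonality (Lemma~\ref{hermiteorthogonality}) together with the discrete-convolution bound (Lemma~\ref{discrconv}),
\[ \EE_\mu[\Phi_N^2]\lesssim\sum_{\substack{n_1+n_2+n_3=0\\ |n_i|_\infty\le N}}\ja{n_1}^{-2\alpha}\ja{n_2}^{-2\alpha}\ja{n_3}^{-2\alpha}\lesssim\log N \]
once $d=3\alpha$. Hence $\Phi_N/\log N\to 0$ in $L^2(\mu)$, so in $\mu$-probability; along a subsequence $N_k$ growing fast enough that $\sum_k(\log N_k)^{-1}<\infty$ (e.g. $N_k=2^{2^k}$), Borel--Cantelli upgrades this to $\Phi_{N_k}/\log N_k\to 0$ $\mu$-a.s. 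Consequently $\mu(E_\eta)=0$ for every $\eta>0$, where $E_\eta\coloneq\{u:\ \Phi_{N_k}(u)\ge\eta\log N_k\ \text{for all large }k\}$.

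\emph{Behaviour under $\varrho_N$.} The first-moment estimate is pure bookkeeping with the relative entropy. Since $H(\varrho_N\,|\,\mu)=-\log\cZ_N-\EE_{\varrho_N}[V_N]\ge 0$ and, by \eqref{masstamingpotential}, $V_N=-\frac\sigma3\int\wick{u_N^3}\,\rd x+A|\int\wick{u_N^2}\,\rd x|^\gamma+\beta_N$, rearranging gives
\[ \EE_{\varrho_N}\!\Bigl[\frac\sigma3\int_{\TT^d}\wick{u_N^3}\,\rd x\Bigr]=H(\varrho_N\,|\,\mu)+\log\cZ_N+\EE_{\varrho_N}\!\Bigl[A\Bigl|\int_{\TT^d}\wick{u_N^2}\,\rd x\Bigr|^\gamma\Bigr]+\beta_N\ \ge\ \beta_N+\log\cZ_N. \]
As $\inf_N\cZ_N>0$ (Proposition~\ref{tightness}) bounds $\log\cZ_N$ below, and $\beta_N\gtrsim\log N$, we obtain $\EE_{\varrho_N}[\Phi_N]\gtrsim_\sigma\log N$. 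To turn this into a probability statement, I would establish a second-moment bound $\EE_{\varrho_N}[\Phi_N^2]=O((\log N)^2)$, ideally $\mathrm{Var}_{\varrho_N}(\Phi_N)=o((\log N)^2)$, by a Bou\'e--Dupuis argument in the spirit of Proposition~\ref{uniexpint}: apply Lemma~\ref{b-d}, expand $\wick{(Y_N+\Theta_N)^3}$ after the change-of-variable, pair the cross terms with Lemma~\ref{besovest} and the pathwise regularity of $\wick{Y_N^k}$ and $\fZ_N\in\cC^{4\alpha-d-\eps}$ (Lemma~\ref{pathwreg}), and absorb into $\norm{\Ups_N}{H^\alpha}^2$, $\norm{\Ups_N}{L^2}^{2\gamma}$ and finite-moment stochastic terms exactly as in Lemmas~\ref{b-dcrossterms} and \ref{phi33heart}. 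Chebyshev under $\varrho_N$ then gives $\varrho_N(\Phi_N\ge\eta\log N)\to 1$ for any $\eta$ below the constant above (the weaker $\EE_{\varrho_N}[\Phi_N^2]=O((\log N)^2)$ already yields $\varrho_N(\Phi_N\ge\eta\log N)\ge\delta>0$, forcing $\varrho$ to have a nontrivial $\mu$-singular part).

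\emph{Conclusion and main obstacle.} With $\eta$ fixed in that range, passing to the limit $\varrho_N\rightharpoonup\varrho$ requires care, since $\Phi_N$ diverges with $N$ and is only $\mu$-a.s.\ defined, not continuous on $\cC^{\alpha-\frac d2-\eps}$; I would handle this exactly as in the proof of Proposition~\ref{asfiniteness}, replacing $u$ by $\varphi_{\eps'}\ast u_N$, inserting a smooth cutoff in $\norm{u}{\cC^{\alpha-\frac d2-\eps}}$, and using monotone/dominated convergence and Portmanteau to get $\varrho(\Phi_{N_k}\ge\eta\log N_k)\to 1$ along the chosen subsequence; a Borel--Cantelli step then gives $\varrho(E_\eta)=1$, and with $\mu(E_\eta)=0$ this is $\varrho\perp\mu$. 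The main obstacle is the second-moment bound $\EE_{\varrho_N}[\Phi_N^2]=O((\log N)^2)$ (and, for full singularity rather than merely a nontrivial singular part, the sharpening to $o((\log N)^2)$): unlike the first-moment identity this genuinely needs the Bou\'e--Dupuis machinery and the full strength of the estimates of Subsection~\ref{c-o-v}, and it is here that the weak-nonlinearity hypothesis $|\sigma|<\sigma_0$ enters, ensuring the $\Ups$-part of the optimal drift is energetically subordinate to $\sigma\fZ_N$. The limit-passing of a diverging functional, though technical, is routine given the mollification device of Proposition~\ref{asfiniteness}.
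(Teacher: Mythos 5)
Your detection idea is the same as the paper's: the divergent counterterm $\beta_N\sim\sigma^2\log N$ records an $O(\log N)$ shift in the mean of (essentially) $\int\wick{u_N^3}\,\rd x$ when one passes from $\mu$ to $\varrho$, while the $\mu$-fluctuations are only $O(\sqrt{\log N})$, and one should exploit this gap. Your relative-entropy identity for the first moment is a nice alternative way to locate the shift, and your computation of $\EE_\mu[\Phi_N^2]\lesssim\log N$ agrees with the paper's. But two steps you leave to the reader are genuine gaps, not routine cleanup.

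\textbf{First gap (passage to the limit).} Your set $E_\eta=\{u:\Phi_{N_k}(u)\ge\eta\log N_k\ \forall k\gg1\}$ requires you to bound $\varrho(\Phi_{N_k}<\eta\log N_k)$, and the relative-entropy identity is not available for this: it is only valid with matched indices, giving $\EE_{\varrho_N}[\Phi_N]$, not $\EE_{\varrho_K}[\Phi_{N_k}]$ for $K>N_k$. When you invoke Portmanteau you must hold the functional $\Phi_{N_k}$ fixed and send the approximation index $K\to\infty$, but for fixed $k$ and $K\to\infty$ your moment identity says nothing about $\EE_{\varrho_K}[\Phi_{N_k}]$. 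It is far from automatic that the tilt of $\varrho_K$ by $\exp(\frac\sigma3\int\wick{u_K^3})$ shifts the \emph{low-frequency} observable $\Phi_{N_k}$ by $\log N_k$; indeed $\Phi_K-\Phi_{N_k}$ is exactly where the $\log K$ divergence lives, and controlling it under $\varrho_K$ is not addressed. Mollification à la Proposition~\ref{asfiniteness} fixes the lack of continuity, but not this index mismatch. The paper's argument is designed precisely to handle two indices: it fixes the detection index $N$, approximates $\varrho$ by $\varrho_K$ with $K\ge N$, and evaluates $\int\exp(\tilde V_N-V_K)\,\rd\mu$ via Boué--Dupuis, where the $\log N$ shift reappears cleanly as $\sigma\EE\bigl[\int_{\TT^d}\wick{Y_N^2}\Theta_N\,\rd x\bigr]=\sigma^2\EE\bigl[\int_0^1\innerprod{\dot\fZ^N}{\dot\fZ_N}{H^\alpha}\,\rd t\bigr]+(\text{controllable})\sim\sigma^2\log N$, uniformly in $K\ge N$.

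\textbf{Second gap (variance/Chebyshev).} Even granting the first-moment statement under $\varrho$, Chebyshev requires $\mathrm{Var}_\varrho(\Phi_{N_k})=o((\log N_k)^2)$, which you acknowledge as the main obstacle but do not prove; and with only $O((\log N_k)^2)$ you only get a nontrivial singular part, not $\varrho\perp\mu$. The paper avoids ever needing such a variance bound by a cheap trick: it normalises by $(\log N)^{-3/4}$, which sits strictly between the $\mu$-fluctuation scale $(\log N)^{1/2}$ and the mean-shift scale $\log N$, and then controls the \emph{exponential} moment $\int\exp\bigl((\log N)^{-3/4}(V_N-\beta_N)\bigr)\,\rd\varrho$ directly via Boué--Dupuis. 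The shift $\sim\sigma^2\log N$ beats the normalisation to give a lower bound $\gtrsim(\log N)^{1/4}$ in the variational problem, whence the $L^1(\varrho)$ norm tends to $0$. No second-moment bound under $\varrho$ is required. If you insist on a Chebyshev route, you should at least adopt the $(\log N)^{-3/4}$ normalisation so that the $\mu$-side is automatic and the $\varrho$-side needs only an $O((\log N)^{3/2})$-type variance bound, which is still nontrivial but more forgiving than $o((\log N)^2)$; the exponential-moment route is cleaner and is what the paper does.
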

\begin{proof}
	We will prove that there exists an increasing sequence $(N_k)$ of positive integers such that the set
	\begin{equation*}
		S=\{u\in \cD':(\log N_k)^{-\frac34}(V_{N_k}(u)-\beta_{N_k})=0\}
	\end{equation*}
	has $\mu(S)=1$ but $\varrho(S)=0$, from which the proposition follows. To this end, by \eqref{masstamingpotential} and \eqref{hypercontractivity}, we have
	\begin{align*}
		\norm{V_N-\beta_N}{L^2(\mu)}^2 &\lesssim_{\sigma,A} \bignorm{\int_{\TT^d}\wick{u_N^3}\, \rd x}{L^2(\mu)}^2+\bignorm{\int_{\TT^d}\wick{u_N^2}\, \rd x}{L^6(\mu)}^6 \\
		&\lesssim \bignorm{\int_{\TT^d}\wick{u_N^3}\, \rd x}{L^2(\mu)}^2+\bignorm{\int_{\TT^d}\wick{u_N^2}\, \rd x}{L^2(\mu)}^6.
	\end{align*}
	Use \eqref{hermiteorthogonalityeq} in Lemma~\ref{hermiteorthogonality} to compute
	\begin{align*}
		\bignorm{\int_{\TT^d}\wick{u_N^k}\, \rd x}{L^2(\mu)}^2 &= \EE\Bigl[\int_{\TT^d\times\TT^d}H_k(Y_N(x);\sigma_N)H_k(Y_N(y);\sigma_N)\, \rd x\, \rd y\Big] \\
		&= \int_{\TT^d\times\TT^d} (\EE[Y_N(x)Y_N(y)])^k\, \rd x\, \rd y \\
		&= \int_{\TT^d\times\TT^d} \Bigl(\sum_{|n|,|m|\le N}\frac{\EE[B_n(1)B_m(1)]}{\ja{n}^\alpha\ja{m}^\alpha}\re^{2\pi\ri(n\cdot x+m\cdot y)}\Bigr)^k\, \rd x\, \rd y \\
		&=\int_{\TT^d\times\TT^d} \sum_{|n_j|\le N\,j=1,\ldots, k}\frac{1}{\ja{n_1}^{2\alpha}\cdots\ja{n_k}^{2\alpha}}\re^{2\pi\ri(n_1+\cdots+n_k)\cdot(x-y)}\, \rd x\, \rd y \\
		&= \sum_{\substack{n_1+\cdots+n_k=0\\ |n_j|\le N,\, j=1,\ldots, k}}\frac{1}{\ja{n_1}^{2\alpha}\cdots\ja{n_k}^{2\alpha}}.
	\end{align*}
	Hence it follows by \eqref{discrconveq} in Lemma~\ref{discrconv} that
	\begin{align*}
		\bignorm{\int_{\TT^d}\wick{u_N^2}\, \rd x}{L^2(\mu)}^2&\lesssim 1 \\
		\bignorm{\int_{\TT^d}\wick{u_N^3}\, \rd x}{L^2(\mu)}^2&\lesssim \sum_{|n|\le N}\frac1{\ja{n}^{2\alpha}}\sum_{\substack{n_1+n_2=n}}\frac{1}{\ja{n_1}^{2\alpha}\ja{n_2}^{2\alpha}} \\
		&\lesssim \sum_{|n|\lesssim N}\frac{1}{\ja{n}^{3\alpha}},
	\end{align*}
	which is comparable to $\log N$ when $d=3\alpha$. It follows that
	\begin{equation*}
		\lim_{N\to\infty}\norm{(\log N)^{-\frac34}(V_N-\beta_N)}{L^2(\mu)}=0.
	\end{equation*}
	Next, we will prove that
	\begin{equation*}
		\lim_{N\to\infty}\norm{\exp((\log N)^{-\frac34}(V_N-\beta_N))}{L^1(\varrho)}=0.
	\end{equation*}
	Arguing along subsequences, this furnishes a subsequence $(N_k)$ to be used in the definition of $S$. Write $\tilde V_N=(\log N)^{-\frac34}(V_N-\beta_N)$. Now, letting $\chi$ be as in Proposition~\ref{asfiniteness}, and using the weak convergence $\varrho_N\rightharpoonup\varrho$, we have
	\begin{align*}
		\int_{\cD'} &\exp(\tilde V_N(u))\, \varrho(\rd u)  \\
	&\le \liminf_{M\to\infty}\int_{\cD'} \exp(\tilde V_N(u))\chi\Bigl(\frac{\tilde V_N(u)}{M}\Bigr)\, \varrho(\rd u) \\
		&\le \liminf_{M\to\infty}\lim_{K\to\infty} \cZ^{-1}\int_{\cD'} \exp(\tilde V_N(u))\exp(-V_K(u))\chi\Bigl(\frac{\tilde V_N(u)}{M}\Bigr)\, \mu(\rd u) \\
		&\le\limsup_{K\to\infty} \cZ^{-1}\int_{\cD}\exp(\tilde V_N(u)-V_K(u))\, \mu(\rd u),
	\end{align*}
	where $\cZ=\lim\,\cZ_N$. Applying the Bou\'e-Dupuis formula with our change-of-variable, we will be interested in the limit as $N\to\infty$ of the quantity below, where $K\ge N$:
	\begin{align*}
		\inf_{\dot\Ups^K\in\HH_{\rm a}^\alpha}&\EE\Bigl[-(\log N)^{-\frac34}(V_N(Y+\Ups_K+\sigma\fZ_K)-\beta_N)+V_K(Y+\Ups_K+\sigma\fZ_K) \\ 
		&\hskip200pt+\frac12\int_0^1\norm{\dot\Ups^K(t)}{H^\alpha}^2\, \rd t\Big];
	\end{align*}
	in particular we will show that the above tends to infinity as $N\to\infty$. Let $\cE$ be as in Proposition~\ref{uniqueness}; picking appropriate constants in \eqref{ZNb-dbd2}, we can show that
	\begin{equation*}
		\EE\Bigl[V_K(Y+\Ups_K+\sigma\fZ_K)+\frac12\int_0^1\norm{\dot\Ups^K(t)}{H^\alpha}^2\, \rd t\Big] \ge \frac1{10}\cE(\dot\Ups^K)-C
	\end{equation*}
	for some large $C$. Expanding for $K\ge N$, we have
	\begin{align*}
		V_N(Y+\Ups_K+\sigma\fZ_K)-\beta_N=&-\frac\sigma3\int_{\TT^d}\wick{Y_N^3}\, \rd x-\sigma\int_{\TT^d}\wick{Y_N^2}\Theta_N\, \rd x \\
		&-\sigma\int_{\TT^d}Y_N\Theta_N^2\, \rd x-\frac\sigma3\int_{\TT^d}\Theta_N^3\, \rd x \\
		&+A\Big|\int_{\TT^d}\wick{(Y_N+\Theta_N)^2}\, \rd x\Big|^3.
	\end{align*}
	The first term vanishes under expectation and, again by choosing appropriate constants in \eqref{ZNb-dbd2}, the final three terms are bounded by $1+\cE(\dot\Ups^K_N)$. Aiming to relate this quantity to $\cE(\dot\Ups^K)$, write
	\begin{align*}
		\EE\Bigl[\Big|\int_{\TT^d}(2Y_N\Ups^K_N+(\Ups^K_N)^2)\, \rd x\Big|^3\Big]&\lesssim \bignorm{\int_{\TT^d} Y_N\Ups^K_N\, \rd x}{L^3(\PP)}^3+\norm{\Ups^K_N}{L^6(\PP;L^2)}^6 \\
		&\lesssim 1+\norm{\Ups_K}{L^6(\PP;L^2)}^6+\norm{\Ups^K}{L^2(\PP;H^\alpha)}^2 \\
		&\lesssim 1+\cE(\dot\Ups^K).
	\end{align*}
	Hence for $K\ge N\gg1$ we have
	\begin{align*}
		\inf_{\dot\Ups^K\in\HH_{\rm a}^\alpha}&\EE\Bigl[-(\log N)^{-\frac34}(V_N(Y+\Ups_K+\sigma\fZ_K)-\beta_N)+V_K(Y+\Ups_K+\sigma\fZ_K) \\ 
		&\hskip200pt+\frac12\int_0^1\norm{\dot\Ups^K(t)}{H^\alpha}^2\, \rd t\Big] \\
		&\ge \inf_{\dot\Ups^K\in\HH_{\rm a}^\alpha}\Bigl[\EE\Bigl[\sigma(\log N)^{-\frac34}\int_{\TT^d}\wick{Y_N^2}\Theta_N\, \rd x\Big]+\frac1{20}\cE(\dot\Ups^K)\Big]-C
	\end{align*}
	By Lemma~\ref{pathwreg}, one has $\innerprod{\dot\fZ^N(t)}{\dot\fZ_N(t)}{H^\alpha}\sim t^2\log N$, and so we compute
	\begin{align*}
		\sigma\EE&\Bigl[\int_{\TT^d}\wick{Y_N^2}\Theta_N\, \rd x\Big]  \\
		&= \sigma\EE\Bigl[\int_0^1\int_{\TT^d}\wick{Y_N^2(t)}\dot\Theta_N(t)\, \rd x\, \rd t\Big] \\
		&= \sigma\EE\Bigl[\int_0^1\innerprod{\dot\fZ^N(t)}{\dot\Ups^K_N(t)}{H^\alpha}\, \rd t\Big]+\sigma^2\EE\Bigl[\int_0^1\innerprod{\dot\fZ^N(t)}{\dot\fZ_N(t)}{H^\alpha}\, \rd t\Big] \\
		& \ge -\eps\EE\Bigl[\int_0^1\norm{\,\wick{Y_N^2(t)}\,}{H^{-\alpha}}\, \rd t\Big]-C_\eps\EE\Bigl[\int_0^1\norm{\dot\Ups^K_N(t)}{H^\alpha}\rd t\Big]+C\log N
	\end{align*}
	In particular with $K\ge N\gg1$, we have
	\begin{align*}
		\inf_{\dot\Ups^K\in\HH_{\rm a}^\alpha}&\EE\Bigl[-(\log N)^{-\frac34}(V_N(Y+\Ups_K+\sigma\fZ_K)-\beta_N)+V_K(Y+\Ups_K+\sigma\fZ_K) \\ 
		&\hskip200pt+\frac12\int_0^1\norm{\dot\Ups^K(t)}{H^\alpha}^2\, \rd t\Big] \\
		&\ge \inf_{\dot\Ups^K\in\HH_{\rm a}^\alpha}\Bigl[C(\log N)^{\frac14}+\frac1{40}\cE_K(\dot\Ups^K)\Big]-C
	\end{align*}
	As $\cE_K$ is nonnegative, taking a limit in $N\to\infty$ above allows us to conclude the proof.
\end{proof}
\subsection{The critical and strongly nonlinear regime.}
In this section we prove the non-normalisability of $\varrho^\delta$, and the non-convergence of the $\varrho_N$ in the critical and strongly nonlinear regime.

\begin{prop} \label{nonnormalisability}
	Let $d=3\alpha$. There exists $\sigma_1\gg1$ such that, when $|\sigma|\ge \sigma_1$, we have
	\begin{equation}
		\varrho^\delta(\cD')=\infty.
	\end{equation}
\end{prop}
\begin{proof}
	Let $(\varphi_\eps)$ be as in Proposition~\ref{asfiniteness}, and compute, using the weak convergence of $(\vartheta_{N,\delta})$, that 
	\begin{align*}
		\int_{\cD'} \exp(\delta\norm{u}{\cA}^q)\, \vartheta_\delta(\rd u) &\ge \int_{\cD'} \exp(\delta\norm{\varphi_\eps\ast u}{\cA}^q)\, \vartheta_\delta(\rd u) \\
		&\ge \lim_{L\to\infty}\lim_{N\to\infty} \int_{\cD'} \exp(\delta\min\{\norm{\varphi_\eps\ast u_N}{\cA}^q,L\})\, \vartheta_{N,\delta}(\rd u).
	\end{align*}
	In particular, it suffices to prove that
	\begin{equation*}
		\lim_{L\to\infty}\lim_{N\to\infty}\EE[\exp(\delta\min\{\norm{\varphi_\eps\ast Y_N}{\cA}^q\}-\delta\norm{Y_N}{\cA}^q-V_N(Y_N))] = \infty.
	\end{equation*}
	Using the Bou\'e-Dupuis formula, the expectation above is equal to
	\begin{equation} \label{b-dnn} \begin{aligned}
	\inf_{\dot\Ups^N\in\HH_{\rm a}^\alpha}\EE\Bigl[&-\delta\min\{\norm{\varphi_\eps\ast (Y_N+\Theta_N)}{\cA}^q,L\}+\delta\norm{Y_N+\Theta_N}{\cA}^q \\
		&-\sigma\int_{\TT^d}Y_N\Theta_N^2\, \rd x-\frac\sigma3\int_{\TT^d}\Theta_N^3\, \rd x+A\Big|\int_{\TT^d}\wick{(Y_N+\Theta_N)^2}\, \rd x\Big|^\gamma \\ 
		&\hskip200pt+\frac12\int_0^1\norm{\dot\Ups^N}{H^\alpha}^2\, \rd t\Big].
	\end{aligned} \end{equation}
	In what follows, we approach as in \cite{OOT25,OOT24,OST24}, and aim to choose a drift term $\dot{\und\Ups}^N$ for which $\und\Ups_N$ resembles ``$-Y(1)$ plus a perturbation'', where the perturbation is bounded in $L^2$ but has large $L^3$ norm. \vspace{\baselineskip}

	We first construct our perturbation term. Fix $M\gg 1$. Let $f$ be a real-valued Schwartz function on $\RR^d$ such that its Fourier transform $\hat f$ is smooth, even, and non-negative, supported on $\{\frac12<|\xi|\le 1\}$, and with $\norm{f}{L^2(\RR^d)}=1$. Define $f_M$ on $\TT^d$ by
	\begin{equation}
		f_M(x) = M^{-\frac d2}\sum_{n\in\ZZ^d} \hat f\Bigl(\frac nM\Bigr)\re^{2\pi\ri n\cdot x}.
	\end{equation}
	Note that, by the Poisson summation formula and properties of the Fourier transform under dilation, we have
	\begin{equation}
		f_M(x) = \sum_{m\in\ZZ^d}M^{\frac d2} f(Mx+Mm).
	\end{equation}
	Moreover, we have the following estimates.
	\begin{lemma} \label{fMest}
		Let $c>0$ be any positive number.
		\begin{align}
			& \int_{\TT^d} f_M^2\, \rd x=1+O(M^{-c}), \label{fML2} \\
			& \int_{\TT^d} (\ja{\nabla}^{-c}f_M)^2\, \rd x\lesssim M^{-2c}, \label{fMH-c} \\
			& \int_{\TT^d} |f_M|^3\, \rd x\sim \int_{\TT^d} f_M^3\, \rd x\sim M^{\frac d2}. \label{fML3}
		\end{align}
	\end{lemma}
	\noindent We delay the proof of Lemma~\ref{fMest} until later. Next, we construct an approximation to $-Y(1)$. To this end, let
	\begin{equation}
		Z_M(x)=\sum_{n\in\ZZ^d}\hat{Y(\tfrac12)}\re^{2\pi\ri n\cdot x}=\sum_{n\in\ZZ^d}\frac{B_n(\frac12)}{\ja{n}^\alpha}\re^{2\pi \ri n\cdot x},
	\end{equation}
	noting that $Z_M$ is measurable in the natural filtration for the Brownian motions past time $t=\frac12$. Let $\kappa_M=\EE[Z_M(x)^2]$, noting that $\kappa_M$ is independent of $x\in\TT^d$. We have the following estimates for $Z_M$.
	\begin{lemma} \label{ZMest}
		Let $1\le p<\infty$ and $N\ge M$.
		\begin{align}
			& \kappa_M \sim M^{d-2\alpha}, \label{kappaM} \\
			& \EE\Bigl[\int_{\TT^d} |Z_M|^p\, \rd x\Big] \lesssim_p M^{\frac p2(d-2\alpha)}, \label{ZMLp} \\
			& \EE\Bigl[\Bigl(\int_{\TT^d} Z_M^2\, \rd x-\kappa_M\Bigr)^2\Big]+\EE\Bigl[\Bigl(\int_{\TT^d} Y_NZ_M\, \rd x-\int_{\TT^d} Z_M^2\, \rd x\Bigr)^2\Big]\lesssim 1, \label{ZMconst} \\
			& \EE\Bigl[\Bigl(\int_{\TT^d} Y_Nf_M\, \rd x\Bigr)^2\Big]+\EE\Bigl[\Bigl(\int_{\TT^d} Z_Mf_M\, \rd x\Bigr)^2\Big]\lesssim M^{-2\alpha}. \label{ZMsmall}
		\end{align}
	\end{lemma}
	\noindent We again delay the proof of Lemma~\ref{ZMest} until later. Now ready to define our drift, we set
	\begin{equation}
		\dot{\und\Ups}^N(t)=2\cdot \1\{t>\tfrac12\}(-Z_M+\sgn\,\sigma\sqrt{\kappa_M}f_M),
	\end{equation}
	so that
	\begin{equation} \label{undUpsN}
		\und\Ups_N=-Z_M+\sgn\,\sigma\sqrt{\kappa_M}f_M.
	\end{equation}
	We now approach \eqref{b-dnn} term-by-term. First observe that
	\begin{align*}
		-\delta\min\{\norm{\varphi_\eps\ast(Y_N&+\und\Theta_N)}{\cA}^q,L\}+\delta\norm{Y_N+\und\Theta_N}{\cA}^q \\
		&=-\delta\min\{\norm{\varphi_\eps\ast(Y_N+\und\Theta_N)}{\cA}^q-\norm{Y_N+\und\Theta_N}{\cA}^q,L-\norm{Y_N+\und\Theta_N}{\cA}^q\};
	\end{align*}
	we will bound each term in the minimum above separately. For the first, we make some preliminary observations. Note that the constraints on the definition of $\cA=B^{-2s}_{3,\infty}$ which arise in the proof of Proposition~\ref{uniexpint} provide $s>\frac\alpha2$ and so we can afford a Schauder estimate of the form 
	\begin{equation*}
		\norm{f_M}{\cA}\lesssim\norm{f_M}{H^{-\frac\alpha2}},
	\end{equation*}
	from which
	\begin{align*}
		\norm{\varphi_\eps\ast(Y_N+\und\Theta_N)}{\cA}^q-&\norm{Y_N+\und\Theta_N}{\cA}^q \\ 
		&\gtrsim -|\norm{(\varphi_\eps-\delta_0)\ast(Y_N+\und\Theta_N)}{\cA}\norm{Y_N+\und\Theta_N}{\cA}^q| \\
		&\gtrsim -\kappa_M^{\frac q2}\norm{(\varphi_\eps-\delta_0)\ast f_M}{H^{-\frac\alpha2}}^q-(\norm{Y_N}{\cA}^q+\norm{Z_M}{\cA}^q+|\sigma|\norm{\fZ_N}{\cA}^q) \\
		&\gtrsim-M^{\frac{q\alpha}{2}}\eps^{c_{\alpha,q}}-(\norm{Y_N}{\cA}^q+\norm{Z_M}{\cA}^q+|\sigma|\norm{\fZ_N}{\cA}^q) \\
		&\gtrsim -1-(\norm{Y_N}{\cA}^q+\norm{Z_M}{\cA}^q+|\sigma|\norm{\fZ_N}{\cA}^q),
	\end{align*}
	after choosing $\eps$ sufficiently small depending on $M$. The terms under parentheses are bounded under expectation. For the second term in the minimum we use the Schauder estimate above and Lemma~\ref{fMest}
	\begin{align*}
		-\norm{Y_N+\und\Theta_N}{\cA}^q &\gtrsim -\kappa_M^{\frac q2}\norm{f_M}{\cA}^q-(\norm{Y_N}{\cA}^q+\norm{Z_M}{\cA}^q+|\sigma|\norm{\fZ_N}{\cA}^q) \\
		&\gtrsim -M^{-\frac{q\alpha}2}-(\norm{Y_N}{\cA}^q+\norm{Z_M}{\cA}^q+|\sigma|\norm{\fZ_N}{\cA}^q) \\
		&\gtrsim -1-(\norm{Y_N}{\cA}^q+\norm{Z_M}{\cA}^q+|\sigma|\norm{\fZ_N}{\cA}^q).
	\end{align*}
	Next, we have, by embeddings and Young's inequality, the bound
	\begin{align*}
		-\sigma\int_{\TT^d} Y_N\und\Theta_N^2\, \rd x &\lesssim |\sigma|\norm{Y_N}{\cC^{-\frac\alpha2-\eps}}\norm{\und\Theta_N^2}{B_{1,1}^{\frac\alpha2+\eps}} \\
		&\lesssim |\sigma|\norm{Y_N}{\cC^{-\frac\alpha2-\eps}}\norm{\und\Theta_N}{B_{2,1}^{\frac\alpha2+\eps}}\norm{\und\Theta_N}{L^2} \\
		&\lesssim |\sigma|^6\norm{Y_N}{\cC^{-\frac\alpha2-\eps}}^6+\norm{\und\Theta_N}{H^{\frac\alpha2+2\eps}}^2+\norm{\und\Theta_N}{L^2}^3 \\
		&\lesssim \norm{\und\Ups_N}{H^\alpha}^2+\norm{\und\Ups_N}{L^2}^3+|\sigma|^6(\norm{Y_N}{\cC^{-\frac\alpha2-\eps}}^6+\norm{\fZ_N}{\cC^{\alpha-\eps}}^2);
	\end{align*}
	the rightmost terms are bounded under expectation, whereas
	\begin{equation*}
		\EE\norm{\und\Ups_N}{L^2}^2 \lesssim \EE\norm{Z_M}{L^2}^2+\kappa_M\norm{f_M}{L^2}^2\lesssim M^\alpha
	\end{equation*}
	using Lemmas \ref{fMest} and \ref{ZMest}; since $f_M$ and $Z_M$ have frequency support in $\{|n|\le M\}$, we have $\norm{\und\Ups_N}{H^\alpha}^2\lesssim M^{2\alpha}\norm{\und\Ups_N}{L^2}^2$, from which, using also a Wiener chaos estimate,
	\begin{equation*}
		\EE\Bigl[-\sigma\int_{\TT^d}Y_N\und\Theta_N^2\, \rd x\Big] \lesssim M^{2\alpha}M^\alpha+(M^\alpha)^{\frac32}+|\sigma| \lesssim M^{3\alpha}.
	\end{equation*}
	Moving on, using \eqref{undUpsN} and Young's inequality, we have
	\begin{align*}
		-\frac\sigma3\int_{\TT^d}\und\Theta_N^3\, \rd x &= \frac\sigma3\int_{\TT^d}Z_M^3\, \rd x-\frac{|\sigma|}3\kappa_M^{\frac32}\int_{\TT^d}f_M^3\, \rd x-\frac{\sigma^4}3\int_{\TT^d}\fZ_N^3\, \rd x \\
		&\hskip10pt-|\sigma|\sqrt{\kappa_M}\int_{\TT^d}Z_M^2f_M\, \rd x-\sigma^2\int_{\TT^d}Z_M\fZ_N\, \rd x \\
		&\hskip10pt+\sigma\kappa_M\int_{\TT^d}Z_Mf_M^2\, \rd x-\sigma^2\kappa_M\int_{\TT^d}f_M^2\fZ_N\, \rd x \\
		&\hskip10pt+\sigma^3\int_{\TT^d}Z_M\fZ_N^2\, \rd x-\sigma^3\sqrt{\kappa_M}\int_{\TT^d}f_M\fZ_N^2\, \rd x \\
		&\hskip10pt+2\sigma^2\sgn\,\sigma\sqrt{\kappa_M}\int_{\TT^d}Z_Mf_M\fZ_N\, \rd x \\
		&\le -\frac{|\sigma|}3\kappa_M^{\frac32}\int_{\TT^d}f_M^3\, \rd x+\eta|\sigma|\kappa_M^{\frac32}\int_{\TT^d}f_M^3\, \rd x \\
		&\hskip10pt+C_\eta\Bigl(|\sigma|\int_{\TT^d}|Z_M|^3\, \rd x+|\sigma|^4\int_{\TT^d}|\fZ_N|^3\, \rd x\Bigr)
	\end{align*}
	for any $0<\eta<1$; in particular, picking e.g. $\eta=\frac12$ and using Lemmas \ref{fMest} and \ref{ZMest}, we have
	\begin{equation*}
		\EE\Bigl[-\frac\sigma3\int_{\TT^d}\und\Theta_N^3\, \rd x\Big] \lesssim -|\sigma|M^{\frac{3\alpha}2}M^{\frac{3\alpha}2}+|\sigma|M^{\frac{3\alpha}2}+|\sigma|^4\lesssim -|\sigma|M^{3\alpha}.
	\end{equation*}
	Moving on, using a Wiener chaos estimate and expanding Wick powers, we have
	\begin{equation} \label{AYN+undThetaNexp} \begin{aligned}
		\EE\Bigl[A\Big|\int_{\TT^d}\wick{(Y_N+\und\Theta_N)^2}\, \rd x\Big|^\gamma\Big] &\lesssim_{A,\gamma} \Bigl(\EE\Bigl[\Big|\int_{\TT^d} (\wick{Y_N^2}+2Y_N\und\Theta_N+\und\Theta_N^2)\, \rd  x\Big|^2\Big]\Bigr)^{\frac\gamma2} \\
		&\lesssim \Bigl(\EE\Bigl[\Big|\int_{\TT^d} \wick{Y_N^2}\, \rd x+2\sigma\int_{\TT^d}Y_N\fZ_N\, \rd x+\sigma^2\int_{\TT^d}\fZ_N^2\, \rd x\Big|^2\Big] \\
		&\hskip15pt +\EE\Bigl[\Big|2\sigma\int_{\TT^d}\und\Ups_N\fZ_N\, \rd  x\Big|^2\Big] \\
		&\hskip15pt +\EE\Bigl[\Big|\int_{\TT^d}(2Y_N\und\Ups_N+\und\Ups_N^2)\, \rd x\Big|^2\Big]\Bigr)^{\frac\gamma2}.
	\end{aligned} \end{equation}
	The first expectation on the right-hand side above is bounded uniformly in $N\ge M\gg 1$ using arguments analogous to those which have appeared before. For the second expectation in \eqref{AYN+undThetaNexp}, we observe that
	\begin{align*}
		\Big|\int_{\TT^d} \und\Ups_N\fZ_N\, \rd x\Big|^2 &= \Bigl(\int_0^1 \Big|\int_{\TT^d} \ja{\nabla}^{-\alpha+\eps}\und\Ups_N\ja{\nabla}^{-\alpha-\eps}\pi_N\wick{Y_N^2(t)}\, \rd x \Big|\, \rd t\Bigr)^2 \\
		&\le \norm{\und\Ups_N}{H^{-\alpha+\eps}}^2\int_0^1 \norm{\pi_N\wick{Y_N^2(t)}}{H^{-\alpha-\eps}}^2\, \rd t \\
		&\lesssim \norm{\und\Ups_N}{H^{-\alpha+\eps}}^4+\int_0^1\norm{\pi_N\wick{Y_N^2(t)}}{H^{-\alpha-\eps}}^4\, \rd t
	\end{align*}
	using Jensen's and Young's inequalities. The second term above is bounded uniformly in $N$ and $t$ under expectation while, for the first, we use the Wiener chaos estimate and Lemmas \ref{fMest} and \ref{ZMest} to obtain
	\begin{align*}
		\EE\norm{\und\Ups_N}{H^{-\alpha+\eps}}^4 &\lesssim (\EE\norm{Z_M}{H^{-\alpha+\eps}}^2)^2+\kappa_M^2\norm{f_M}{H^{-\alpha+\eps}}^4 \\
		&\lesssim \Bigl(\sum_{|n|\le M}\ja{n}^{-4\alpha+2\eps}\Bigr)^2+M^{2\alpha}(M^{-2\alpha+2\eps})^2 \\
		&\lesssim 1.
	\end{align*}
	We now bound the third expectation in \eqref{AYN+undThetaNexp}. By expanding and grouping terms, we have
	\begin{align*}
		\EE\Bigl[&\Big|\int_{\TT^d}(2Y_N\und\Ups_N+\und\Ups_N^2)\, \rd x\Big|^2\Big] \\
		&= \EE\Bigl[\Big|-2\int_{\TT^d} Y_NZ_M\, \rd x+2\sqrt{\kappa_M}\int_{\TT^d} Y_Nf_M\, \rd x+\int_{\TT^d}Z_M^2\, \rd x \\
		&\hskip25pt-2\sqrt{\kappa_M}\int_{\TT^d} Z_Mf_M\, \rd x+\kappa_M\int_{\TT^d}f_M^2\, \rd x\Big|^2\Big] \\
		&\lesssim \EE\Bigl[\Bigl(\int_{\TT^d} Z_M^2\, \rd x-\kappa_M\Bigr)^2\Big]+\EE\Bigl[\Bigl(\int_{\TT^d} Y_NZ_M\, \rd x-\int_{\TT^d} Z_M^2\, \rd x\Bigr)^2\Big] \\
		&\hskip15pt+\kappa_M^2\Bigl(\int_{\TT^d} f_M^2\, \rd x-1\Bigr)^2+\kappa_M\EE\Bigl[\Bigl(\int_{\TT^d}Y_Nf_M\, \rd x\Bigr)^2\Big]+\kappa_M\EE\Bigl[\Bigl(\int_{\TT^d} Z_Mf_M\, \rd x\Bigr)^2\Big] \\
		&\lesssim 1.
	\end{align*}
	To bound the final term in \eqref{b-dnn} we simply recall that $\und\Ups_N$ has frequency support in $\{|n|\le M\}$ so that
	\begin{align*}
		\EE\Bigl[\frac12\int_0^1 \norm{\dot{\und\Ups}_N(t)}{H^\alpha}^2\, \rd t\Big] &\lesssim M^{2\alpha} \EE\norm{\und\Ups_N}{L^2}^2 \lesssim M^{3\alpha}
	\end{align*}
	as before. Compiling all of the above, we have, for $M$ sufficiently large depending on $\sigma$, and $\eps$ sufficiently small depending on $M$, that 
	\begin{equation*}
		-\log\EE[\exp(\delta\min\{\norm{\varphi_\eps\ast Y_N}{\cA}^q\}-\delta\norm{Y_N}{\cA}^q-V_N(Y_N))]\lesssim_{\sigma,\delta,A,\gamma} 1+M^{3\alpha}-|\sigma|M^{3\alpha};
	\end{equation*}
	therefore if $|\sigma|$ is sufficiently large, then the above tends to $-\infty$ as $M\to\infty$, proving the required divergence.
\end{proof}
\begin{prop} \label{noweaklimits}
	Let $d=3\alpha$. For $\sigma_1$ as in Proposition~\ref{nonnormalisability} and when $|\sigma|\ge\sigma_1$, the truncated measures $(\varrho_N)$ have no weak limit, even up to a subsequence.
\end{prop}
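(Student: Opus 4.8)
The plan is to argue by contradiction, feeding the non-normalisability of Proposition~\ref{nonnormalisability} into the weak convergence $\vartheta_{N,\delta}\rightharpoonup\vartheta_\delta$ supplied by Propositions~\ref{tightness} and~\ref{uniqueness}. Fix $\delta>0$ and $|\sigma|\ge\sigma_1$, and suppose toward a contradiction that, along some subsequence, $\varrho_{N_k}\rightharpoonup\varrho_\infty$ weakly as measures on $\cA$ for some probability measure $\varrho_\infty$. Since $\varrho_N=\cZ_N^{-1}\exp(-V_N)\mu$ and $\vartheta_{N,\delta}=\cZ_{N,\delta}^{-1}\exp(-\delta\norm{u_N}{\cA}^q-V_N)\mu$, these measures are mutually absolutely continuous and one has the exact identity
\begin{equation} \label{noweaklimitskey}
	\int_{\cA}\exp(-\delta\norm{u_N}{\cA}^q)\, \varrho_N(\rd u)=\frac{\cZ_{N,\delta}}{\cZ_N}
\end{equation}
for every $N$. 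The contradiction will come from showing that the left-hand side of \eqref{noweaklimitskey} tends to $0$ along $(N_k)$ while also being bounded below by a positive constant along $(N_k)$.

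For the first claim it suffices to prove $\cZ_N/\cZ_{N,\delta}\to\infty$. Writing this ratio as $\int_{\cD'}\exp(\delta\norm{u_N}{\cA}^q)\,\vartheta_{N,\delta}(\rd u)$, I would invoke the nonnegative approximate identity $(\varphi_\eps)_{\eps>0}$ with $\norm{\varphi_\eps}{L^1}=1$ built in the proof of Proposition~\ref{asfiniteness}: for each $\eps>0$ there is $N_0(\eps)$ with $\varphi_\eps\ast u=\varphi_\eps\ast u_N$ for $N\ge N_0(\eps)$, so by Young's convolution inequality $\norm{u_N}{\cA}\ge\norm{\varphi_\eps\ast u_N}{\cA}=\norm{\varphi_\eps\ast u}{\cA}$ for such $N$. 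Hence, for any $L>0$ and all $N\ge N_0(\eps)$,
\begin{equation*}
	\frac{\cZ_N}{\cZ_{N,\delta}}\ge\int_{\cD'}\exp\bigl(\delta\min\{\norm{\varphi_\eps\ast u}{\cA}^q,L\}\bigr)\,\vartheta_{N,\delta}(\rd u).
\end{equation*}
The integrand is a bounded continuous function of $u$, independent of $N$, so passing to the limit using $\vartheta_{N,\delta}\rightharpoonup\vartheta_\delta$, then letting $\eps\downarrow0$ (dominated convergence, via $\varphi_\eps\ast u\to u$ in $\cA$ and the bound $e^{\delta L}$), and finally $L\to\infty$ (monotone convergence) gives
\begin{equation*}
	\liminf_{N\to\infty}\frac{\cZ_N}{\cZ_{N,\delta}}\ge\int_{\cD'}\exp\bigl(\delta\norm{u}{\cA}^q\bigr)\,\vartheta_\delta(\rd u)=\varrho^\delta(\cD')=\infty,
\end{equation*}
the last equality being Proposition~\ref{nonnormalisability}. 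Thus $\cZ_{N,\delta}/\cZ_N\to0$, so the right-hand side of \eqref{noweaklimitskey} tends to $0$ along $(N_k)$.

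For the lower bound, fix $R>0$ and set $U_R=\{u\in\cA:\norm{u}{\cA}<R\}$, an open subset of $\cA$. Since $\pi_N$ commutes with the Littlewood--Paley pieces $\phi_j(\nabla)$ and is bounded on $L^3$ uniformly in $N$, it is bounded on $\cA=B^{-2s}_{3,\infty}$ uniformly in $N$, with operator norm at most some $C_0$; hence $\norm{u_N}{\cA}\le C_0R$ on $U_R$, and therefore
\begin{equation*}
	\int_{\cA}\exp(-\delta\norm{u_N}{\cA}^q)\,\varrho_N(\rd u)\ge e^{-\delta C_0^qR^q}\,\varrho_N(U_R).
\end{equation*}
By the portmanteau theorem for the open set $U_R$, $\liminf_{k\to\infty}\varrho_{N_k}(U_R)\ge\varrho_\infty(U_R)$; choosing $R$ so large that $\varrho_\infty(U_R)\ge\tfrac12$ (possible since $\varrho_\infty$ is a probability measure and $\bigcup_{R>0}U_R=\cA$), we get $\liminf_{k\to\infty}\int_\cA\exp(-\delta\norm{u_{N_k}}{\cA}^q)\,\varrho_{N_k}(\rd u)\ge\tfrac12 e^{-\delta C_0^qR^q}>0$, contradicting the previous paragraph together with \eqref{noweaklimitskey}. (If one permits subsequential limits with positive total mass that are not probabilities, the same argument runs with $\tfrac12$ replaced by half that mass; a subsequential limit equal to the zero measure represents total loss of mass and so is not a weak limit among probability measures.)

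The genuinely hard ingredient — the divergence $\varrho^\delta(\cD')=\infty$ for $|\sigma|\ge\sigma_1$ — is already established in Proposition~\ref{nonnormalisability}, so the only real care needed in this proof is bookkeeping: tracking the space on which the measures live, reconciling the $\norm{u_N}{\cA}$ appearing in the densities with the $\norm{u}{\cA}$ of the limit (handled through the mollifier $\varphi_\eps$ and the uniform boundedness of $\pi_N$ on $\cA$), and using the correct one-sided portmanteau inequalities. I do not anticipate any further obstacle.
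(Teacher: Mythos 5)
Your proof is correct, and it takes a genuinely different (and leaner) route from the paper's. The paper introduces the auxiliary measures $\vartheta^N_\delta$, whose density carries the untruncated weight $\exp(-\delta\norm{u}{\cA}^q)$ rather than $\exp(-\delta\norm{u_N}{\cA}^q)$, and the bulk of the paper's argument is Lemma~\ref{vartheta^N}, which shows $\vartheta^N_\delta\rightharpoonup\vartheta_\delta$ and $\cZ^N_\delta\to\cZ_\delta$; that lemma is proved by a nontrivial estimate combining the mean value theorem, uniform $\cA\to\cA$-boundedness of $\pi_N$, the Sobolev embedding \eqref{embedAintosobolev}, and a closing Bou\'e--Dupuis computation. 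Once the lemma is in hand, multiplying the putative limit $\nu$ of $(\varrho_N)$ by the bounded continuous density $\exp(-\delta\norm{u}{\cA}^q)$ identifies $\vartheta_\delta$, and hence $\varrho^\delta$, as a positive scalar multiple of $\nu$, contradicting Proposition~\ref{nonnormalisability}. You dispense with $\vartheta^N_\delta$ and Lemma~\ref{vartheta^N} entirely: instead you use the exact identity $\int\exp(-\delta\norm{u_N}{\cA}^q)\,\varrho_N(\rd u)=\cZ_{N,\delta}/\cZ_N$, prove the right-hand side tends to $0$ (by the mollifier trick already used in Propositions~\ref{asfiniteness} and~\ref{nonnormalisability}, the weak convergence $\vartheta_{N,\delta}\rightharpoonup\vartheta_\delta$, and the divergence $\varrho^\delta(\cD')=\infty$), and contradict this with a positive lower bound on the left-hand side coming from the portmanteau inequality on the open ball $U_R$ together with the uniform boundedness of $\pi_N$ on $\cA$. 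Both arguments spend the same essential inputs --- Propositions~\ref{tightness},~\ref{uniqueness},~\ref{asfiniteness},~\ref{nonnormalisability} --- but yours replaces the quantitative comparison of $\norm{u_N}{\cA}^q$ against $\norm{u}{\cA}^q$ (the content of Lemma~\ref{vartheta^N}) with the purely qualitative bound $\norm{u_N}{\cA}\le C_0\norm{u}{\cA}$ plus a portmanteau step, which is a nice simplification. Your closing caveat about possible mass loss in the subsequential limit is the right thing to note; under the usual convention that ``weak limit'' means a limit among probability measures, the total mass is preserved and the argument closes as written.
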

\begin{proof}
	Let
	\begin{equation}
		\vartheta^N_\delta(\rd u) = (\cZ^N_\delta)^{-1}\exp(-\delta\norm{u}{\cA}^q)\, \varrho_N(\rd u).
	\end{equation}
	We have the following alternate way to build $\vartheta_\delta$.
	\begin{lemma}\label{vartheta^N}
	As measures on $\cC^{\alpha-\frac d2-\eps}$ we have $\vartheta^N_\delta\rightharpoonup\vartheta_\delta$, and $\cZ^N_\delta\to\cZ_\delta$.
	\end{lemma} 
	\noindent Delaying the proof of Lemma~\ref{vartheta^N}, we now prove Proposition~\ref{noweaklimits}. Assume, for contradiction, that $\varrho_N\rightharpoonup\nu$. The observation
	\begin{align*}
		\vartheta_\delta(\rd u)&=\wlim\limits_{N\to\infty} \frac{\cZ_N}{\cZ^N_\delta}\exp(-\delta\norm{u}{\cA}^q)\, \varrho_N(\rd u) \\
		&=\frac{\cZ}{\cZ_\delta}\exp(-\delta\norm{u}{\cA}^q)\, \nu(\rd u)
	\end{align*}
	implies that $\varrho^\delta=\cZ\cZ_\delta^{-1}\nu$; since $\nu$ is a probability measure, this is a contradiction to Proposition~\ref{nonnormalisability} as it implies $\varrho^\delta(\cD')<\infty$.
\end{proof}
\begin{proof}[Proof of Lemma~\ref{vartheta^N}]
	We will first prove that $\cZ^N_\delta\to\cZ_\delta$, for which it suffices to show that $|\cZ^N_\delta-\cZ_{N,\delta}|\to 0$. To this end, compute
	\begin{align*}
		|\cZ^N_\delta-\cZ_{N,\delta}| &\le \int_{\cD'}|\exp(-\delta\norm{u}{\cA}^q-V_N(u))-\exp(-\delta\norm{u_N}{\cA}^q-V_N(u))|\, \mu(\rd u) \\
		&=\int_{\cD'} \re^{-\delta(\norm{u}{\cA}^q\wedge\norm{u_N}{\cA}^q)-V_N(u)}(1-\re^{-\delta|\norm{u}{\cA}^q-\norm{u_N}{\cA}^q|})\, \mu(\rd u) \\
		&\le \delta\int_{\cD'} \re^{-\delta(\norm{u}{\cA}^q\wedge\norm{u_N}{\cA}^q)-V_N(u)}|\norm{u}{\cA}^q-\norm{u_N}{\cA}^q|\, \mu(\rd u) \\
		&\lesssim \delta\int_{\cD'} \re^{-c\delta\norm{u_N}{\cA}^q-V_N(u)}|\norm{u}{\cA}-\norm{u_N}{\cA}|\cdot\norm{u}{\cA}^{q-1}\, \mu(\rd u) \\
		&\lesssim \delta\int_{\cD'} \re^{-c\delta\norm{u_N}{\cA}^q-V_N(u)}\norm{u-u_N}{\cA}\cdot\norm{u}{\cA}^{q-1}\, \mu(\rd u) \\
		&\lesssim \delta\int_{\cD'} \re^{-c\delta\norm{u_N}{\cA}^q-V_N(u)}\norm{u-u_N}{W^{-\alpha+\eps,3}}\cdot\norm{u}{\cA}^{q-1}\, \mu(\rd u) \\
		&\lesssim \delta N^{-a}\int_{\cD'} \re^{-c\delta\norm{u_N}{\cA}^q-V_N(u)}\norm{u}{W^{-\alpha+a+\eps,3}}\cdot\norm{u}{\cA}^{q-1}\, \mu(\rd u) \\
		&\lesssim \delta N^{-a} \cZ_{N,c\delta}\int_{\cD'} \norm{u}{W^{-\alpha+a+\eps,3}}\cdot\norm{u}{\cA}^{q-1}\, \vartheta_{N,c\delta}(\rd u) \\
	\end{align*}
	using the mean value theorem for the third line, the $\cA\to\cA$-boundedness of $\pi_N$ for the fourth line, \eqref{embedAintosobolev} for the sixth line, and the $L^3\to L^3$-boundedness of $1-\pi_N$ for the seventh line. Recalling that the $\cZ_{N,c\delta}$ are uniformly bounded, picking, e.g., $a=\frac\alpha4$ to permit $W^{-\alpha+a+\eps,3}\supseteq {\rm supp}\,\mu$, and using $r^k\lesssim \exp(\delta' r^\ell)$ for any $k,\ell$, leaves us with
	\begin{align*}
		|\cZ^N_\delta-\cZ_{N,\delta}| &\lesssim \delta N^{-\frac\alpha4}\int_{\cD'} \re^{\frac c2\delta\norm{u}{\cA}^q+\delta'\norm{u}{W^{-\frac\alpha2-\eps,\infty}}^2}\, \vartheta_{N,c\delta}(\rd u) \\
		&\lesssim \liminf_{K\to\infty}\delta N^{-\frac\alpha4} \int_{\cD'} \re^{\frac c2\delta\norm{u_K}{\cA}^q+\delta'\norm{u_K}{W^{-\frac\alpha2-\eps,\infty}}^2}\, \vartheta_{N,c\delta}(\rd u).
	\end{align*}
	One can now close the argument by the Bou\'e-Dupuis formula:
	\begin{align*}
		-\log\int_{\cD'} &\re^{\frac c2\delta\norm{u_K}{\cA}^q+\delta'\norm{u_K}{H^\alpha}^2}\, \vartheta_{N,c\delta}(\rd u) \\
		&= \inf_{\dot\Ups^K\in\HH_{\rm a}^\alpha}\EE\Bigl[-\frac c2\delta\norm{Y_K+\Theta_K}{\cA}^q-\delta'\norm{Y_K+\Theta_K}{W^{-\frac\alpha2-\eps,\infty}}^2 \\
		&\hskip90pt+c\delta\norm{Y_K+\Theta_K}{\cA}^q+V_N(Y_K+\Theta_K)+\frac12\int_0^1\norm{\dot\Ups^K(t)}{H^\alpha}\, \rd t\Bigr];
	\end{align*}
	where we write $Y_K=Y_N+(Y_K-Y_N)$ and $\Theta_K=\Theta_N+(\Theta_K-\Theta_N)$ and deal with tail terms separately.
\end{proof}

We conclude the this subsection with the proofs of Lemmas \ref{fMest} and \ref{ZMest}, which are essentially identical to those found in \cite[Lemmas 5.13 and 5.14]{OOT25}).
\begin{proof}[Proof of Lemma~\ref{fMest}]
	For \eqref{fML2} we use the Poisson summation formula to write
	\begin{equation} \label{fML2poisson} \begin{aligned}
		\int_{\TT^d} f_M^2\, \rd x &=M^d\Bigl(\int_{\TT^d} f(Mx)^2\, \rd x+\int_{\TT^d} f(Mx)\sum_{m\ne 0}f(Mx+Mm)\, \rd x \\
		&\hskip100pt+\int_{\TT^d} \sum_{m,m'\ne 0} f(Mx+Mm)f(Mx+Mm')\, \rd x\Bigr)
	\end{aligned} \end{equation}
	The first integral in \eqref{fML2poisson} can be estimated by using a change-of-variable, namely,
	\begin{align*}
		M^d\int_{\TT^d} f(Mx)^2\, \rd x&=M^d\int_{|y|\le M} f(y)^2M^{-d}\, \rd y \\
		&=1-\int_{|y|>M}f(y)^2\, \rd y \\
		&=1-O(M^{-c})
	\end{align*}
	for any $c>0$, using that $f$ is $L^2(\RR^d)$-normalised and its Schwartz decay. Moreover for $x\in\TT^d$, we can use this Schwartz decay to write
	\begin{equation*}
		|f(Mx+Mm)|\lesssim |Mm|^{-d-c}
	\end{equation*}
	so that the second and third integrals in \eqref{fML2poisson} are essentially bounded by
	\begin{equation*}
		\sum_{m\ne 0} |Mm|^{-d-c}+\sum_{m,m'\ne 0} |Mm|^{-d-c}|Mm'|^{-d-c}\lesssim M^{-d-c},
	\end{equation*}
	which is enough for \eqref{fML2}. For \eqref{fMH-c}, we use Plancherel's theorem and the boundedness of $\hat f$ to write
	\begin{align*}
		\int_{\TT^d} (\ja{\nabla}^{-c}f_M)^2\, \rd x &= \sum_{n\in\ZZ^d}\frac{|\hat f_M(n)|^2}{\ja{n}^{2c}} \\
		&= M^{-d}\sum_{\frac M2<|n|\le M}\frac{|\hat f(\frac nM)|^2}{\ja{\nabla}^{2c}} \\
		&\lesssim M^{-d-2c}\sum_{\frac M2<|n|\le M}\Big|\hat f\Bigl(\frac nM\Bigr)\Big|^2 \\
		&\lesssim M^{-2c}.
	\end{align*}
	For \eqref{fML3}, first compute
	\begin{align*}
		\int_{\TT^d} f_M^3\, \rd x &= \int_{\TT^d} \sum_{\frac M2<|n_1|,|n_2|,|n_3|\le M}M^{-\frac{3d}2}\hat f\Bigl(\frac{n_1}M\Bigr)\hat f\Bigl(\frac{n_2}M\Bigr)\hat f\Bigl(\frac{n_3}M\Bigr)\re^{2\pi\ri(n_1+n_2+n_3)\cdot x}\, \rd x \\
		&=\sum_{\frac M2<|n_1|,|n_2|\le M}M^{-\frac{3d}2}\hat f\Bigl(\frac{n_1}M\Bigr)\hat f\Bigl(\frac{n_2}M\Bigr)\hat f\Bigl(-\frac{n_1+n_2}M\Bigr) \\
		&\sim M^{\frac d2}.
	\end{align*}
	From the above one has the lower bound on $\norm{f_M}{L^3}^3$. For the upper bound, by the Hausdorff-Young inequality, and using the support and boundedness of $\hat f$, we have
	\begin{align*}
		\int_{\TT^d} |f_M|^3\, \rd x&\le \norm{\hat f_M}{\ell^{\frac32}}^3 \\
		&=\Bigl(\sum_{\frac M2<|n|\le M} \Big|M^{-\frac d2}\hat f\Bigl(\frac nM\Bigr)\Big|^{\frac32}\Bigr)^3 \\
		&\lesssim M^{\frac d2},
	\end{align*}
	which completes the proof of \eqref{fML3} and so that of Lemma~\ref{fMest}.
\end{proof}
\begin{proof}[Proof of Lemma~\ref{ZMest}]
	The proof of \eqref{kappaM} is the following computation: 
	\begin{align*}
		\kappa_M &=\sum_{|n|,|m|\le M}\frac{\EE[B_n(\frac12)B_m(\frac12)]}{\ja{n}^\alpha\ja{m}^\alpha}\re^{2\pi\ri (n+m)\cdot x} \\
		&\sim \sum_{|n|\le M} \ja{n}^{-2\alpha} \\
		&\sim M^{d-2\alpha}.
	\end{align*}
	For \eqref{ZMLp}, use Fubini's theorem and the Wiener chaos estimate as
	\begin{align*}
		\EE\Bigl[\int_{\TT^d} |Z_M|^p\, \rd x\Big] &= \int_{\TT^d} \EE|Z_M(x)|^p\, \rd x \\
		&\lesssim_p \int_{\TT^d} (\EE[Z_M(x)^2])^{\frac p2}\, \rd x \\
		&\sim M^{\frac p2(d-2\alpha)}.
	\end{align*} 
	To prove \eqref{ZMconst}, we observe
	\begin{align*}
		\EE&\Bigl[\Bigl(\int_{\TT^d} Z_M^2\, \rd x-\kappa_M\Bigr)^2\Big]  \\
		&= \EE\Bigl[\Bigl(\int_{\TT^d}\sum_{|n|,|m|\le M}\frac{B_n(\frac12)B_m(\frac12)-\EE[B_n(\frac12)B_m(\frac12)]}{\ja{n}^\alpha\ja{m}^\alpha}\re^{2\pi\ri (n+m)\cdot x}\, \rd x\Bigr)^2\Big] \\
		&=\EE\Bigl[\Bigl(\sum_{|n|\le M}\frac{|B_n(\frac12)|^2-\frac12}{\ja{n}^{2\alpha}}\Bigr)^2\Big] \\
		&=\sum_{|n|,|m|\le M}\frac{\EE[(|B_n(\frac12)|^2-\frac12)(|B_m(\frac12)|^2-\frac12)]}{\ja{n}^{2\alpha}\ja{m}^{2\alpha}} \\
		&=\sum_{|n|\le M}\frac{\EE(|B_n(\frac12)|^2-\frac12)^2}{\ja{n}^{4\alpha}} \\
		&\lesssim 1+M^{d-4\alpha} \\
		&\lesssim 1
	\end{align*}
	and, analogously to above using the independence of $B_n(\frac12)$ from $B_n(1)-B_n(\frac12)$,
	\begin{align*}
		\EE&\Bigl[\Bigl(\int_{\TT^d} Y_NZ_M\, \rd x-\int_{\TT^d} Z_M^2\, \rd x\Bigr)^2\Big] \\
		&= \EE\Bigl[\Bigl(\int_{\TT^d}\sum_{|n|,|m|\le M}\frac{B_n(\frac12)(B_m(1)-B_m(\frac12))}{\ja{n}^{2\alpha}\ja{m}^{2\alpha}}\re^{2\pi\ri(n+m)\cdot x}\, \rd x\Bigr)^2\Big] \\
		&= \sum_{|n|,|m|\le M}\frac{\EE[B_n(\frac12)\bar{B_n(1)-B_n(\frac12)}B_m(\frac12)\bar{B_m(1)-B_m(\frac12)}]}{\ja{n}^{2\alpha}\ja{m}^{2\alpha}} \\
		&\lesssim 1.
	\end{align*}
	Finally, for \eqref{ZMsmall}, we compute
	\begin{align*}
		\EE\Bigl[\Bigl(\int_{\TT^d} Y_Nf_M\, \rd x\Bigr)^2\Big] &= \EE\Bigl[\Bigl(\sum_{\frac M2<|n|\le M}\hat Y_N(n)\hat f_M(n)\Bigr)^2\Big] \\
		&=M^{-d}\sum_{\frac M2<|n|,|m|\le M} \frac{\EE[B_n(1)B_m(1)]}{\ja{n}^\alpha\ja{m}^\alpha}\hat f\Bigl(\frac nM\Bigr)\hat f\Bigl(\frac mM\Bigr) \\
		&M^{-d}\sum_{\frac M2<|n|\le M}\frac1{\ja{n}^{2\alpha}}\hat f\Bigl(\frac nM\Bigr)^2 \\
		&\lesssim M^{-2\alpha}
	\end{align*}
	and similarly for $\EE[(\int_{\TT^d} Z_Mf_M\, \rd x)^2]$, which completes the proof of Lemma~\ref{ZMest}.
\end{proof}
%

\section*{Acknowledgements}
The author would like to thank his supervisor Leonardo Tolomeo for his suggestions and comments throughout the completion of this project.

\printbibliography

\end{document}